\newcommand{\stepsize}{\gamma}
\newcommand{\strongconvex}{\mu}
\newcommand{\overlap}{\tau}
\newcommand{\contraction}{\rho}
\newcommand{\sparsity}{\Delta}
\newcommand{\sparsityr}{\Delta_r}
\newcommand{\lipschitz}{L}
\newcommand{\lyapunov}{\mathcal{L}}
\newcommand{\E}{\mathbb{E}}
\newcommand{\Econd}{\mathbf{E}}
\newcommand{\ind}{\mathbbm{1}}
\newcommand{\ASAGA}{\textsc{Asaga}}
\newcommand{\SAGA}{\textsc{Saga}}
\newcommand{\SAG}{\textsc{Sag}}
\newcommand{\SVRG}{\textsc{Svrg}}
\newcommand{\Hogwild}{\textsc{Hogwild}}
\newcommand{\SDCA}{\textsc{Sdca}}
\newcommand{\SGD}{\textsc{Sgd}}
\newcommand{\HSAG}{\textsc{Hsag}}
\newcommand{\KROMAGNON}{\textsc{Kromagnon}}
\newtheorem{asm}{Assumption}
\newtheorem{prop}{Property}
\newtheorem{proposition}{Proposition}
\newtheorem{lemma}{Lemma}
\newtheorem{theorem}{Theorem}
\newtheorem{corollary}[theorem]{Corollary}
\newtheorem{definition}{Definition}
\newtheorem{remark}{Remark}
\begin{document}
\twocolumn[
	
\aistatstitle{\ASAGA: Asynchronous Parallel \SAGA}


	
\aistatsauthor{Rémi Leblond
\And
Fabian Pedregosa
\And
Simon Lacoste-Julien}

\aistatsaddress{ INRIA - Sierra Project-team\\
\'Ecole normale sup\'erieure, Paris 
\And INRIA - Sierra Project-team\\
\'Ecole normale sup\'erieure, Paris
\And
Department of CS \& OR (DIRO) \\
Universit\'e de Montr\'eal, Montr\'eal}

]

\newcommand{\fix}{\marginpar{FIX}}
\newcommand{\new}{\marginpar{NEW}}

\begin{abstract}
\vspace{-2mm}
We describe \ASAGA, an asynchronous parallel version of the incremental gradient algorithm \SAGA\ that enjoys fast linear convergence rates. 
Through a novel perspective, we revisit and clarify a subtle but important technical issue present in a large fraction of the recent convergence rate proofs for asynchronous parallel optimization algorithms, and propose a simplification of the recently introduced ``perturbed iterate'' framework that resolves it. 
We thereby prove that \ASAGA\ can obtain a theoretical linear speedup on multi-core systems even without sparsity assumptions. 
We present results of an implementation on a 40-core architecture illustrating the practical speedup as well as the hardware overhead. 
\end{abstract} 

\vspace{-2mm}
\section{Introduction}
We consider the unconstrained optimization problem of minimizing a \emph{finite sum} of smooth convex functions:
\vspace{-1mm}
\begin{equation} \label{eq:finiteSum}
\min_{x \in \mathbb{R}^d} f(x), \quad f(x) := \frac{1}{n} \sum_{i=1}^{n} f_i(x),
\end{equation}
where each $f_i$ is assumed to be convex with $\lipschitz$-Lipschitz continuous gradient, $f$ is $\mu$-strongly convex and $n$ is large (for example, the number of data points in a regularized empirical risk minimization setting).
We define a condition number for this problem as $\kappa := \nicefrac{\lipschitz}{\mu}$.
A flurry of randomized incremental algorithms (which at each iteration select $i$ at random and process only one gradient $f'_i$) have recently been proposed to solve~\eqref{eq:finiteSum} with a fast\footnote{Their complexity in terms of gradient evaluations to reach an accuracy of $\epsilon$ is $O((n+\kappa)\log(\nicefrac{1}{\epsilon}))$, in contrast to $O(n\kappa\log(\nicefrac{1}{\epsilon}))$ for batch gradient descent in the worst case.} linear convergence rate, such as \SAG~\citep{SAG}, \SDCA~\citep{SDCA}, \SVRG~\citep{svrg} and \SAGA~\citep{SAGA}.
These algorithms can be interpreted as variance reduced versions of the stochastic gradient descent (\SGD) algorithm, and they have demonstrated both theoretical and practical improvements over \SGD\ (for the \emph{finite sum} optimization problem~\eqref{eq:finiteSum}).

In order to take advantage of the multi-core architecture of modern computers, the aforementioned optimization algorithms need to be adapted to the asynchronous parallel setting, where multiple threads work concurrently.
Much work has been devoted recently in proposing and analyzing asynchronous parallel variants of algorithms such as \SGD~\citep{hogwild}, \SDCA~\citep{asyncSDCA2015} and \SVRG~\citep{smola,mania,asySVRG}.
Among the incremental gradient algorithms with fast linear convergence rates that can optimize~\eqref{eq:finiteSum} in its general form, only \SVRG\ has had an asynchronous parallel version proposed.\footnote{We note that \SDCA\ requires the knowledge of an explicit $\mu$-strongly convex regularizer in~\eqref{eq:finiteSum}, whereas \SAG~/ \SAGA\ are adaptive to any local strong convexity of $f$~\citep{laggedsaga,SAGA}. This is also true for a variant of \SVRG~\citep{qsaga}.}
No such adaptation has been attempted yet for \SAGA, even though one could argue that it is a more natural candidate as, contrarily to \SVRG, it is not epoch-based and thus has no synchronization barriers at all.

\vspace{-2mm}
\paragraph{Contributions.} In Section~\ref{scs:sparse_saga}, we present a novel sparse variant of \SAGA\ that is more adapted to the parallel setting than the original \SAGA\ algorithm. 
In Section~\ref{sec:ASAGA}, we present \ASAGA, a lock-free asynchronous parallel version of Sparse \SAGA\ that does not require consistent reads. 
We propose a simplification of the ``perturbed iterate'' framework from~\citet{mania} as a basis for our convergence analysis. 
At the same time, through a novel perspective, we revisit and clarify a technical problem present in a large fraction of the literature on randomized asynchronous parallel algorithms (with the exception of~\citet{mania}, which also highlights this issue): namely, they all assume unbiased gradient estimates, an assumption that is inconsistent with their proof technique without further synchronization assumptions. 
In Section~\ref{ssec:convergence}, we present a tailored convergence analysis for \ASAGA. Our main result states that \ASAGA\ obtains the same geometric convergence rate per update as \SAGA\ when the overlap bound $\overlap$ (which scales with the number of cores) satisfies
$\overlap \leq \mathcal{O}(n)$ and $\overlap \leq \mathcal{O}({\scriptstyle \frac{1}{\sqrt{\sparsity}}} \max\{1,\frac{n}{\kappa} \})$, where $\sparsity \leq 1$ is a measure of the sparsity of the problem, notably implying that a linear speedup is theoretically possible even without sparsity in the well-conditioned regime where $n \gg \kappa$.
In Section~\ref{sec:results}, we provide a practical implementation of \ASAGA\ and illustrate its performance on a 40-core architecture, showing improvements compared to asynchronous variants of \SVRG\ and \SGD.

\vspace{-2mm}
\paragraph{Related Work.}
The seminal textbook of~\citet{bertsekasParalle1989} provides most of the foundational work for parallel and distributed optimization algorithms. 
An asynchronous variant of \SGD\ with constant step size called \Hogwild\ was presented by~\citet{hogwild}; part of their framework of analysis was re-used and inspired most of the recent literature on asynchronous parallel optimization algorithms with convergence rates, including asynchronous variants of coordinate descent~\citep{asyncCD2015}, \SDCA~\citep{asyncSDCA2015}, \SGD\ for non-convex problems~\citep{taming,asyncSGDNonConvex2015}, \SGD\ for stochastic optimization~\citep{duchi} and \SVRG~\citep{smola,asySVRG}. 
These papers make use of an unbiased gradient assumption that is not consistent with the proof technique, and thus suffers from technical problems\footnote{Except~\citet{duchi} that can be easily fixed by incrementing their global counter \emph{before} sampling.} that we highlight in Section~\ref{ssec:labelingIssue}. 

The ``perturbed iterate'' framework presented in~\citet{mania} is to the best of our knowledge the only one that does not suffer from this problem, and our convergence analysis builds heavily from their approach, while simplifying it.
In particular, the authors assumed that $f$ was both strongly convex and had a bound on the gradient, two \emph{inconsistent} assumptions in the unconstrained setting that they analyzed.
We overcome these difficulties by using tighter inequalities that remove the requirement of a bound on the gradient. We also propose a more convenient way to label the iterates (see Section~\ref{ssec:labelingIssue}).
The sparse version of \SAGA\ that we propose is also inspired from the sparse version of \SVRG\ proposed by~\citet{mania}. 
\citet{smola} presents a hybrid algorithm called \HSAG\ that includes \SAGA\ and \SVRG\ as special cases. 
Their asynchronous analysis is epoch-based though, and thus does not handle a fully asynchronous version of \SAGA\ as we do. 
Moreover, they require consistent reads and do not propose an efficient sparse implementation for \SAGA, in contrast to \ASAGA.  

\vspace{-2mm}
\paragraph{Notation.}  We denote by $\E$ a full expectation with respect to all the randomness, and by $\Econd$ the \emph{conditional} expectation of a random~$i$ (the index of the factor $f_i$ chosen in \SGD-like algorithms), conditioned on all the past, where ``past'' will be clear from the context. 
$[x]_v$ is the coordinate~$v$ of the vector~$x \in \mathbb{R}^d$.
$x^+$ represents the updated parameter vector after one algorithm iteration.

\vspace{-1mm}
\section{Sparse \SAGA}\label{scs:sparse_saga}
\vspace{-3mm}
Borrowing our notation from~\citet{qsaga}, we first present the original \SAGA\ algorithm and then describe a novel sparse variant that is more appropriate for a parallel implementation. 

\vspace{-2mm}
\paragraph{Original \SAGA\ Algorithm.} The standard \SAGA\ algorithm~\citep{SAGA} maintains two moving quantities to optimize~\eqref{eq:finiteSum}: the current iterate~$x$ and a table (memory) of historical gradients $(\alpha_i)_{i=1}^n$.\footnote{For linear predictor models, the memory $\alpha_i^0$ can be stored as a scalar. Following~\cite{qsaga}, $\alpha_i^0$ can be initialized to any convenient value (typically $0$), unlike the prescribed $f'_i(x_0)$ analyzed in~\citet{SAGA}.}
At every iteration, the \SAGA\ algorithm samples uniformly at random an index $i \in \{1,\ldots, n\}$, and then executes the following update on~$x$ and~$\alpha$ (for the unconstrained optimization version):
\begin{equation}\label{eq:SAGAupdate}
x^{+} = x - \stepsize \big(f'_i(x) - \alpha_i + \bar \alpha\big); \qquad  \alpha_i^+ = f'_i(x),
\end{equation}
where $\stepsize$ is the step size and $\bar \alpha := \nicefrac{1}{n} \sum_{i=1}^n \alpha_i$ can be updated efficiently in an online fashion. Crucially, $\Econd \alpha_i = \bar \alpha$ and thus the update direction is unbiased ($\Econd x^{+} = x - \stepsize f'(x)$). 
Furthermore, it can be proven (see~\citet{SAGA}) that under a reasonable condition on $\stepsize$, the update has vanishing variance, which enables the algorithm to converge linearly with a constant step size.

\vspace{-2mm}
\paragraph{Motivation for a Variant.}
In its current form, every \SAGA\ update is dense even if the individual gradients are sparse due to the historical gradient ($\bar \alpha$) term. 
\citet{laggedsaga} introduced a special implementation with lagged updates where every iteration has a cost proportional to the size of the support of $f_i'(x)$. 
However, this subtle technique is not easily adaptable  to the parallel setting (see App.~\ref{apx:DifficultyLagged}). 
We therefore introduce Sparse \SAGA, a novel variant which explicitly takes sparsity into account and is easily parallelizable. 

\vspace{-2mm}
\paragraph{Sparse \SAGA\ Algorithm.}
As in the Sparse \SVRG\ algorithm proposed in~\citet{mania}, we obtain Sparse \SAGA\ by a simple modification of the parameter update rule in~\eqref{eq:SAGAupdate} where $\bar{\alpha}$ is replaced by a sparse version equivalent in expectation: 
\begin{equation} \label{eq:SparseSAGA}
x^{+} = x - \stepsize (f'_i(x) - \alpha_i + D_i \bar \alpha),
\end{equation}
where $D_i$ is a diagonal matrix that makes a weighted projection on the support of $f'_i$.
More precisely, let $S_i$ be the support of the gradient $f_i'$ function (i.e., the set of coordinates where $f_i'$ can be nonzero). Let $D$ be a $d\times d$ diagonal reweighting matrix, with coefficients~$\nicefrac{1}{p_v}$ on the diagonal, where $p_v$ is the probability that dimension~$v$ belongs to $S_i$ when $i$ is sampled uniformly at random in $\{1,...,n\}$.
We then define $D_i := P_{S_i} D$, where $P_{S_i}$ is the projection onto $S_i$. The normalization from $D$
ensures that $\Econd D_i \bar \alpha = \bar{\alpha}$, and thus that the update is still unbiased despite the projection.

\vspace{-2mm}
\paragraph{Convergence Result for (Serial) Sparse \SAGA.}
For clarity of exposition, we model our convergence result after the simple form of~\citet[Corollary~3]{qsaga} (note that the rate for Sparse \SAGA\ is the same as \SAGA). The proof is given in Appendix~\ref{apxA}.

\begin{theorem}\label{th1}
Let $\stepsize = \frac{a}{5\lipschitz}$ for any $a\leq1$. 
Then \textnormal{Sparse \SAGA} converges geometrically in expectation with a rate factor of at least $\rho(a) = \frac{1}{5} \min\big\{\frac{1}{n}, a\frac{1}{\kappa}\big\}$, i.e., for $x_t$ obtained after $t$ updates, we have ${\E \|x_t - x^*\|^2} \leq {(1-\rho)}^t \,  C_0$, where $C_0 := \|x_0 - x^*\|^2 + \frac{1}{5L^2} \sum_{i=1}^n {\|\alpha_i^0 - f'_i(x^*)\|^2}$.
\end{theorem}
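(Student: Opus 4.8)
The plan is to track the standard \SAGA\ Lyapunov function that couples the distance to the optimum with the quality of the stored gradients,
\begin{equation*}
\lyapunov_t := \|x_t - x^*\|^2 + \frac{1}{5\lipschitz^2}\sum_{i=1}^n \|\alpha_i^t - f_i'(x^*)\|^2 ,
\end{equation*}
so that $\lyapunov_0 = C_0$, and to establish a one-step contraction $\Econd\lyapunov_{t+1}\le(1-\contraction)\lyapunov_t$ conditioned on the past. Taking full expectations and unrolling then gives $\E\lyapunov_t\le(1-\contraction)^t C_0$, and since the memory term is nonnegative we have $\E\|x_t-x^*\|^2\le\E\lyapunov_t$, which is exactly the claim. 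Everything thus reduces to proving the one-step inequality for $\stepsize=a/(5\lipschitz)$.

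For the iterate part, I would write $g:=f_i'(x)-\alpha_i+D_i\bar\alpha$ so that $x^+=x-\stepsize g$ and expand $\|x^+-x^*\|^2 = \|x-x^*\|^2 - 2\stepsize\langle x-x^*,g\rangle + \stepsize^2\|g\|^2$. Taking $\Econd$ and using unbiasedness $\Econd g = f'(x)$ (this is exactly where the normalization $\Econd D_i\bar\alpha=\bar\alpha$ enters) turns the cross term into $-2\stepsize\langle x-x^*,f'(x)\rangle$. I would then lower-bound this inner product by a convex combination of two facts: $\strongconvex$-strong convexity of $f$, which yields $\langle f'(x),x-x^*\rangle\ge (f(x)-f(x^*)) + \tfrac{\strongconvex}{2}\|x-x^*\|^2$ and supplies the $a/\kappa$-type iterate contraction; and the smoothness identity for each convex $\lipschitz$-smooth $f_i$, which after averaging (using $f'(x^*)=0$) gives $\langle f'(x),x-x^*\rangle \ge (f(x)-f(x^*)) + \tfrac{1}{2\lipschitz}\,\tfrac1n\sum_i\|f_i'(x)-f_i'(x^*)\|^2$ and supplies a crucial \emph{negative} gradient term.

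The variance of the update and the evolution of the memory are governed by the same smoothness inequality $\tfrac1n\sum_i\|f_i'(x)-f_i'(x^*)\|^2\le 2\lipschitz(f(x)-f(x^*))$. For the second moment, since $f_i'(x)$, $\alpha_i$ and $f_i'(x^*)$ are all supported on $S_i$ and $D_i=P_{S_i}D$, I can center the memory contribution around its mean $\bar\alpha$ and combine $\|u+v\|^2\le 2\|u\|^2+2\|v\|^2$ with $\Econd\|Y-\Econd Y\|^2\le\Econd\|Y\|^2$ to obtain a bound of the form $\Econd\|g\|^2\le 2\,\tfrac1n\sum_i\|f_i'(x)-f_i'(x^*)\|^2 + 2\,\tfrac1n\sum_i\|\alpha_i-f_i'(x^*)\|^2$. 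For the memory term, sampling $i$ uniformly gives $\Econd\sum_j\|\alpha_j^+-f_j'(x^*)\|^2 = (1-\tfrac1n)\sum_j\|\alpha_j-f_j'(x^*)\|^2 + \tfrac1n\sum_j\|f_j'(x)-f_j'(x^*)\|^2$, which exposes the $\tfrac1n$ memory contraction at the price of one more gradient term.

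Collecting all contributions into $\Econd\lyapunov_{t+1}$, the surviving negative terms are $-\stepsize\strongconvex\|x-x^*\|^2$ (iterate contraction), $-\tfrac{1}{5\lipschitz^2}\tfrac1n\sum_i\|\alpha_i-f_i'(x^*)\|^2$ (memory contraction), and the negative gradient term from the smoothness identity; the positive terms are the gradient contributions from the variance bound and from the memory refresh. \textbf{The main obstacle is the bookkeeping of these gradient terms:} one must verify that, for $\stepsize=a/(5\lipschitz)$ with $a\le 1$, the $\tfrac1{2\lipschitz}$-weighted negative term together with the surplus suboptimality $-(f(x)-f(x^*))$ dominates the $2\stepsize^2$-weighted variance term and the $\tfrac1{5\lipschitz^2}$-weighted memory-increase term, so that the net coefficient in front of $\tfrac1n\sum_i\|f_i'(x)-f_i'(x^*)\|^2$ is nonpositive and can be discarded. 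This balancing is delicate precisely because the memory-increase gradient term is of the same order as the co-coercivity budget, which is what forces the factor $\tfrac15$ and the step size $a/(5\lipschitz)$. Once the gradient coefficient is controlled, the two independent contraction sources combine into $\contraction=\tfrac15\min\{\tfrac1n, a/\kappa\}$ — the $\tfrac1n$ coming from the memory refresh probability and the $a/\kappa=\stepsize\strongconvex$ from strong convexity — with the factor $\tfrac15$ serving as the slack needed for every $a\le1$.
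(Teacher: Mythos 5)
Your outline follows essentially the same route as the paper: the paper reduces Theorem~\ref{th1} to re-establishing \citet[Lemma~1]{qsaga} for the sparse update and then reuses their Lyapunov contraction argument wholesale, and that Lyapunov function is exactly the one you write down, with the same decomposition into iterate contraction, memory contraction, variance term and negative suboptimality term, and the same bookkeeping that forces the factor $\nicefrac15$.

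There is one step, however, where your justification does not hold as stated, and it is precisely the step that distinguishes Sparse \SAGA\ from plain \SAGA: the bound $\Econd\|\alpha_i - D_i\bar\alpha - f_i'(x^*)\|^2 \leq \Econd\|\alpha_i - f_i'(x^*)\|^2$. You invoke $\Econd\|Y-\Econd Y\|^2\le\Econd\|Y\|^2$, but with $Y_i := \alpha_i - f_i'(x^*)$ the subtracted quantity $D_i\bar\alpha$ is \emph{not} the deterministic mean $\Econd Y_i = \bar\alpha$; it is itself a random vector depending on $i$, and the generic inequality ``subtracting something with the right expectation cannot increase the second moment'' is false in general. The paper instead expands the square and computes both the cross term and the quadratic term explicitly using the support structure: since $\alpha_i - f_i'(x^*)$ is supported on $S_i$ and $D\bar\alpha = D_i\bar\alpha + D_{\neg i}\bar\alpha$ is an orthogonal decomposition, one gets
\begin{equation*}
\Econd \langle \alpha_i - f_i'(x^*),\, D_i\bar\alpha\rangle = \langle \Econd \alpha_i,\, D\bar\alpha\rangle = \bar\alpha^\intercal D\bar\alpha, \qquad \Econd\|D_i\bar\alpha\|^2 = \Econd\langle D_i\bar\alpha, D\bar\alpha\rangle = \bar\alpha^\intercal D\bar\alpha,
\end{equation*}
so the two extra terms combine to $-\,\bar\alpha^\intercal D\bar\alpha \le 0$. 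You correctly identify that the support structure of $D_i = P_{S_i}D$ is what makes this work, so the gap is in the justification rather than the claim, but as written the argument would not survive scrutiny; with this computation supplied, the rest of your balancing of the gradient coefficients goes through exactly as in \citet{qsaga}.
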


\vspace{-2mm}
\paragraph{Comparison with Lagged Updates.}
The lagged updates technique in \SAGA\ is based on the observation that the updates for component $[x]_v$ can be delayed until this coefficient is next accessed. 
Interestingly, the expected number of iterations between two steps where a given dimension $v$ is involved in the partial gradient is $p_v^{-1}$, where $p_v$ is the probability that $v$ is involved. 
$p_v^{-1}$ is precisely the term which we use to multiply the update to $[x]_v$ in Sparse \SAGA. 
Therefore one may view the Sparse \SAGA\ updates as \textit{anticipated} \SAGA\ updates, whereas those in the~\citet{laggedsaga} implementation are \textit{lagged}. 

Although Sparse \SAGA\ requires the computation of the $p_v$ probabilities, this can be done during a first pass through the data (during which constant step size \SGD\ may be used) at a negligible cost. 
In our experiments, both Sparse \SAGA\ and \SAGA\ with lagged updates had similar convergence in terms of number of iterations, with the Sparse \SAGA\ scheme being slightly faster in terms of runtime. 
We refer the reader to~\citet{laggedsaga} and Appendix~\ref{apxC} for more details.

\section{Asynchronous Parallel Sparse \SAGA} \label{sec:ASAGA}
\vspace{-2mm}
As most recent parallel optimization contributions, we use a similar hardware model to~\citet{hogwild}. 
We have multiple cores which all have read and write access to a shared memory. 
They update a central parameter vector in an asynchronous and lock-free fashion. 
Unlike~\citet{hogwild}, we \emph{do not} assume that the vector reads are consistent: multiple cores can read and write different coordinates of the shared vector at the same time. This means that a full vector read for a core might not correspond to any consistent state in the shared memory at any specific point in time. 

\vspace{-1mm}
\subsection{Perturbed Iterate Framework}
\vspace{-2mm}
We first review the ``perturbed iterate'' framework recently introduced by~\citet{mania} which will form the basis of our analysis. 
In the sequential setting, stochastic gradient descent and its variants can be characterized by the following update rule:
\begin{equation}
x_{t+1} = x_t -\stepsize g(x_t, i_t),
\end{equation}
where $i_t$ is a random variable independent from $x_t$ and we have the unbiasedness condition $\Econd g(x_t, i_t) = f'(x_t)$ (recall that $\Econd$ is the relevant-past conditional expectation with respect to $i_t$).

Unfortunately, in the parallel setting, we manipulate stale, inconsistent reads of shared parameters and thus we do not have such a straightforward relationship. 
Instead, \citet{mania} proposed to separate $\hat x_t$, the actual value read by a core to compute an update, with $x_t$, a ``virtual iterate'' that we can analyze and is \emph{defined} by the update equation:
$
x_{t+1} := x_t -\stepsize g(\hat x_t, i_t).
$ We can thus interpret $\hat x_t$ as a noisy (perturbed) version of $x_t$ due to the effect of asynchrony. In the specific case of (Sparse) \SAGA, we have to add the additional read memory argument $\hat{\alpha}^t$ to our update:
\begin{equation}  \label{eq:PIupdate}
\begin{aligned}
x_{t+1} &:= x_t -\stepsize g(\hat x_t, \hat \alpha^t, i_t);
\\
g(\hat x_t, \hat \alpha^t, i_t) &:= f'_{i_t}(\hat x_t) - \hat \alpha_{i_t}^t + D_{i_t} \left({\textstyle \nicefrac{1}{n} \sum_{i=1}^n \hat{\alpha}_i^t }\right).
\end{aligned}
\end{equation}
We formalize the precise meaning of $x_t$ and $\hat x_t$ in the next section. 
We first note that all the papers mentioned in the related work section that analyzed asynchronous parallel randomized algorithms assumed that the following unbiasedness condition holds:
\begin{equation} \label{eq:unbiasedness}
\left[{\text{unbiasedness} \atop \text{condition}}\right] \quad \Econd [g(\hat x_{t}, i_t) | \hat x_t] = f'(\hat x_t).
\end{equation}
This condition is at the heart of most convergence proofs for randomized optimization methods.\footnote{A notable exception is \SAG~\citep{SAG} which has biased updates, yielding a significantly more complex convergence proof. Making \SAG\ unbiased leads to \SAGA~\citep{SAGA} and a much simpler proof.}
\citet{mania} correctly pointed out that most of the literature thus made the often implicit assumption that $i_t$ is independent of $\hat{x}_t$. 
But as we explain below, this assumption is incompatible with a non-uniform asynchronous model in the analysis approach used in most of the recent literature.

\vspace{-1mm}
\subsection{On the Difficulty of Labeling the Iterates} \label{ssec:labelingIssue}
\vspace{-2mm}
Formalizing the meaning of $x_t$ and $\hat x_t$ highlights a subtle but important difficulty arising when analyzing \emph{randomized} parallel algorithms: what is the meaning of $t$? 
This is the problem of \emph{labeling} the iterates for the purpose of the analysis, and this labeling can have randomness itself that needs to be taken in consideration when interpreting the meaning of an expression like $\E[x_t]$. 
In this section, we contrast three different approaches in a unified framework. 
We notably clarify the dependency issues that the labeling from~\citet{mania} resolves and propose a new, simpler labeling which allows for much simpler proof techniques.
We consider algorithms that execute in parallel the following four steps, where $t$ is a global labeling that needs to be defined:
\begin{equation} \label{eq:updates}
\begin{minipage}{0.43\textwidth}
\begin{enumerate}
\setlength\itemsep{-0.2em}
\item Read the information in shared memory ($\hat{x}_t$).
\item Sample $i_t$.
\item Perform some computations using ($\hat{x}_t, i_t$).
\item Write an update to shared memory.
\end{enumerate}
\end{minipage}
\end{equation}

\vspace{-4mm}
\paragraph{The ``After Write'' Approach.} We call the ``after write'' approach the standard global labeling scheme used in~\citet{hogwild} and re-used in all the later papers that we mentioned in the related work section, with the notable exceptions of~\citet{mania} and~\citet{duchi}. In this approach, $t$ is a (virtual) global counter recording the number of \emph{successful writes} to the shared memory $x$ (incremented after step~4 in~\eqref{eq:updates}); $x_t$ thus represents the (true) content of the shared memory after $t$ updates. 
The interpretation of the crucial equation~\eqref{eq:PIupdate} then means that $\hat{x}_t$ represents the (delayed) local copy value of the core that made the $(t+1)^{\mathrm{th}}$ successful update; $i_t$ represents the factor sampled by this core for this update. 
Notice that in this framework, the value of $\hat x_t$ and $i_t$ is unknown at ``time~$t$''; we have to wait to the later time when the next core writes to memory to finally determine that its local variables are the ones labeled by $t$.
We thus see that here~$\hat x_t$ and~$i_t$ are not necessarily independent -- they share dependence through the $t$ label assignment. 
In particular, if some values of~$i_t$ yield faster updates than others, it will influence the label assignment defining $\hat x_t$. We illustrate this point with a concrete problematic example in Appendix~\ref{apx:ProblematicExample} that shows that in order to preserve the unbiasedness condition~\eqref{eq:unbiasedness}, the ``after write'' framework makes the implicit assumption that the computation time for the algorithm running on a core is independent of the sample $i$ chosen.
This assumption seems overly strong in the context of potentially heterogeneous factors $f_i$'s, and is thus a fundamental flaw for analyzing non-uniform asynchronous computation.

\vspace{-2mm}
\paragraph{The ``Before Read'' Approach.}
\citet{mania} addresses this issue by proposing instead to increment the global~$t$~counter just \emph{before} a new core starts to \emph{read} the shared memory (before step~1 in~\eqref{eq:updates}).  
In their framework, $\hat{x}_t$ represents the (inconsistent) read that was made by this core in this computational block, and $i_t$ represents the picked sample. 
The update rule~\eqref{eq:PIupdate} represents a \emph{definition} of the meaning of $x_t$, which is now a ``virtual iterate'' as it does not necessarily correspond to the content of the shared memory at any point. 
The real quantities manipulated by the algorithm in this approach are the $\hat{x}_t$'s, whereas $x_t$ is used only for the analysis -- the critical quantity we want to see vanish is $\E \|\hat x_t - x^*\|^2$.
The independence of~$i_t$ with~$\hat{x}_t$ can be simply enforced in this approach by making sure that the way the shared memory $x$ is read does not depend on $i_t$ (e.g. by reading all its coordinates in a fixed order). Note that this means that we have to read all of $x$'s coordinates, regardless of the size of $f_{i_t}$'s support. 
This is a much weaker condition than the assumption that all the computation in a block does not depend on $i_t$ as required by the ``after write'' approach, and is thus more reasonable.

\vspace{-2mm}
\paragraph{A New Global Ordering: the ``After Read'' Approach.}
The ``before read'' approach gives rise to the following complication in the analysis: $\hat{x}_t$ can depend on $i_r$ for $r > t$. 
This is because $t$ is a global time ordering only on the assignment of computation to a core, not on when  $\hat{x}_t$ was finished to be read. 
This means that we need to consider both the ``future'' and the ``past'' when analyzing $x_t$. 
To simplify the analysis (which proved crucial for our \ASAGA\ proof), we thus propose a third way to label the iterates: $\hat{x}_t$ represents the $(t+1)^{\mathrm{th}}$ \emph{fully completed read} ($t$ incremented after step~1 in~\eqref{eq:updates}). 
As in the ``before read'' approach, we can ensure that $i_t$ is independent of $\hat{x}_t$ by ensuring that how we read does not depend on $i_t$. 
But unlike in the ``before read'' approach, $t$ here now does represent a global ordering on the $\hat{x}_t$ iterates -- and thus we have that $i_r$ is independent of $\hat{x}_t$ for $r > t$. 
Again using~\eqref{eq:PIupdate} as the definition of the virtual iterate $x_t$ as in the perturbed iterate framework, we then have a very simple form for the value of $x_t$ and $\hat{x}_t$ (assuming atomic writes, see Property~\ref{eventconst} below):
\begin{equation} \label{eq:xhatUpdates}
\begin{aligned}
x_t &= x_0 - \stepsize \sum_{u = 0}^{t-1} g(\hat x_u, \hat \alpha^u, i_u) \, ;
\\
[\hat{x}_t]_v &= [x_0]_v - \stepsize  
\mkern-36mu \sum_{\substack{u = 0 \\ 
		\text{u s.t. coordinate $v$ was written}\\ 
		\text{for $u$ before $t$} } 
}^{t-1} \mkern-36mu  [g(\hat x_u, \hat \alpha^u, i_u)]_v \, .
\end{aligned}
\end{equation}
The main idea of the perturbed iterate framework is to use this handle on $\hat x_t - x_t$ to analyze the convergence for $x_t$. 
In this paper, we can instead give directly the convergence of $\hat x_t$, and so unlike in~\citet{mania}, we do not require that there exists a $T$ such that $x_T$ lives in shared memory.

\begin{figure*}[ttt!]
	\vspace{-5mm}
 \begin{minipage}[t]{0.49\textwidth}
   \begin{algorithm}[H]
     \caption{\ASAGA\ (analyzed algorithm)}
     \label{alg:theoretical}
     \label{theoreticalgo}
     \begin{algorithmic}[1]
	   \STATE Initialize shared variables $x$ and $(\alpha_i)_{i=1}^n$
	   \LOOP
	      \STATE $\hat x = $ inconsistent read of $x$
		  \STATE $\forall j$, $\hat \alpha_j = $ inconsistent read of $\alpha_j$
	   	  \STATE \textcolor{blue}{Sample $i$}  uniformly at random in $\{1,...,n\}$
	      \STATE Let $S_i$ be $f_i$'s support
	      \STATE $[\bar \alpha]_{S_i} = \nicefrac{1}{n} \sum_{k=1}^n [\hat \alpha_k]_{S_i}$
    		  \STATE $[\delta x]_{S_i} = -\stepsize (f'_i(\hat x) - \hat \alpha_i + D_{i} [\bar \alpha]_{S_i})$
    		  \STATE
        	  \FOR{$v$ {\bfseries in} $S_i$}
        		 \STATE $[x]_v \leftarrow [x]_v + [\delta x]_v$      \hfill // atomic
	         \STATE $[\alpha_i]_v \leftarrow [f'_i( \hat x )]_v$
	         \STATE // {\small(`$\gets$' denotes a shared memory update.)}
    		  \ENDFOR
	   \ENDLOOP
	  \end{algorithmic}
    \end{algorithm}
 \end{minipage}
 \hfill
 \begin{minipage}[t]{0.5\textwidth}
    \begin{algorithm}[H]
      \caption{\ASAGA\ (implementation)}
      \label{alg:sagasync}
      \begin{algorithmic}[1]
	    \STATE Initialize shared variables $x$, $(\alpha_i)_{i=1}^n$ and $\bar \alpha$
	    \LOOP
	      \STATE \textcolor{blue}{Sample $i$} uniformly at random in $\{1,...,n\}$
  	      \STATE Let $S_i$ be $f_i$'s support
 	      \STATE $[\hat x]_{S_i} = $ inconsistent read of $x$ on $S_i$
	      \STATE $\hat \alpha_i = $ inconsistent read of $\alpha_i$
	      \STATE $[\bar \alpha]_{S_i} = $ inconsistent read of $\bar \alpha$ on $S_i$
	      \STATE $[\delta \alpha]_{S_i} = f'_i([\hat x]_{S_i}) - \hat \alpha_i$
	      \STATE $[\delta x]_{S_i} = - \gamma ([\delta\alpha]_{S_i} + D_i [\bar \alpha]_{S_i})$
	      \FOR{$v$ {\bfseries in} $S_i$}
	        \STATE $[x]_v \gets [x]_v + [\delta x]_v$  \hfill // atomic
	        \STATE $[\alpha_i]_v \gets [\alpha_i]_v + [\delta \alpha]_v$ \hfill // atomic
	        \STATE $[\bar \alpha]_v \gets [\bar \alpha]_v + \nicefrac{1}{n}[\delta \alpha]_v$ \hfill // atomic
	     \ENDFOR
	   \ENDLOOP
      \end{algorithmic}
    \end{algorithm}
 \end{minipage}
 	\vspace{-5mm}
\end{figure*}

\vspace{-1mm}
\subsection{Analysis setup} \label{ssec:convergence}
\vspace{-2mm}
We describe \ASAGA, a sparse asynchronous parallel implementation of Sparse \SAGA, in Algorithm~\ref{alg:theoretical} in the theoretical form that we analyze, and in Algorithm~\ref{alg:sagasync} as its practical implementation.
Before stating its convergence, we highlight some properties of Algorithm~\ref{alg:theoretical} and make one central assumption. 

\begin{prop} [independence]
Given the ``after read'' global ordering, $i_r$ is independent of $\hat{x}_t$ $\forall r \geq t$.
\label{independence}
\end{prop}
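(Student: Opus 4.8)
The plan is to exploit the temporal structure that the ``after read'' labeling imposes on the four steps of~\eqref{eq:updates}, and to cast the statement as a standard measurability-versus-freshness argument: I want to show that $\hat{x}_t$ is completely determined \emph{before} any sample $i_r$ with $r \geq t$ is drawn, and then invoke the fact that each $i_r$ is a fresh i.i.d.\ uniform draw independent of everything preceding it.

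First I would pin down what $\hat{x}_t$ can depend on. Using the atomic-write assumption (Property~\ref{eventconst}) and expression~\eqref{eq:xhatUpdates}, I would argue that every write contributing to $\hat{x}_t$ originates from an iteration $u < t$. The sub-argument is purely about orderings of events: a write from iteration $u$ (step~4) happens only after that iteration's read (step~1) has completed; if this write is seen by the read labeled $t$, then it completed before read~$t$ did, so read~$u$ completed strictly before read~$t$. Since the ``after read'' labeling orders iterations precisely by read-completion time, this gives $u < t$, which both justifies the summation limit $t-1$ in~\eqref{eq:xhatUpdates} and shows that $\hat{x}_t$ is a function of the initial state and of the samples $\{i_u : u < t\}$ alone (together with the read schedule, which by construction is chosen independently of the current sample, e.g.\ by reading coordinates in a fixed order).

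Next I would locate, on the global time axis, the moment each $i_r$ is drawn relative to the completion of read~$t$. Because sampling is step~2 and the read is step~1 of the \emph{same} iteration, $i_r$ is drawn strictly after read~$r$ completes; and because $r \geq t$, read~$r$ completes no earlier than read~$t$ under the ``after read'' ordering. Combining, $i_r$ is drawn after read~$t$ has completed, i.e.\ after $\hat{x}_t$ is fully determined. To make this rigorous I would introduce the filtration $\mathcal{F}_t$ generated by all the randomness (samples and scheduler decisions) up to and including the completion of read~$t$: the previous paragraph shows $\hat{x}_t$ is $\mathcal{F}_t$-measurable, while this paragraph shows $i_r$ ($r \geq t$) is a uniform draw performed after $\mathcal{F}_t$ is settled and hence independent of $\mathcal{F}_t$. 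Independence of $i_r$ from the $\mathcal{F}_t$-measurable $\hat{x}_t$ then follows immediately.

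The delicate point — and the reason the ``after read'' labeling is needed at all — is that computation (and hence the scheduler's timing) may depend on the samples, so one cannot naively assume that ``$\hat{x}_t$ is in the past of $i_r$''. The crux is that the read is performed \emph{before} the sample within each iteration and in a sample-independent order, so $\hat{x}_t$ never depends on $i_t$ (nor, by the global read-completion ordering, on any $i_r$ with $r > t$); this is exactly what fails for the ``before read'' scheme, where $\hat{x}_t$ may depend on $i_r$ for $r > t$. Carrying out this ordering argument carefully — establishing that read-completion order is a genuine global total order and that writes visible to read~$t$ necessarily carry labels $u<t$ — is the main obstacle; once it is in place the independence conclusion is essentially immediate.
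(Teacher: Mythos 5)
Your argument is correct and follows essentially the same route as the paper, which justifies this property only briefly: for $r=t$ by having the full read precede the sample (and not depend on it), and for $r>t$ by noting that the ``after read'' labeling is a global ordering on read completions, so $\hat{x}_t$ is fully determined from $\{i_u : u<t\}$ and the schedule before any $i_r$ with $r\geq t$ is drawn. Your filtration/measurability formalization and the check that writes visible to read~$t$ carry labels $u<t$ (used for~\eqref{eq:xhatUpdates}) simply make explicit what the paper asserts.
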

\vspace{-2mm}
We enforce the independence for $r = t$ in Algorithm~\ref{alg:theoretical} by having the core read all the shared data parameters and historical gradients before starting their iterations. 
Although this is too expensive to be practical if the data is sparse, this is required by the theoretical Algorithm~\ref{theoreticalgo} that we can analyze. 
As~\citet{mania} stress, this independence property is assumed in most of the parallel optimization literature. The independence for $r > t$ is a consequence of using the ``after read'' global ordering instead of the ``before read'' one.

\begin{prop}[Unbiased estimator]
The update, $g_t := g(\hat x_t, \hat \alpha^t, i_t)$, is an unbiased estimator of the true gradient at $\hat x_t$ (i.e.~\eqref{eq:PIupdate} yields~\eqref{eq:unbiasedness} in conditional expectation).
\end{prop}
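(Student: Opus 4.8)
The plan is to compute the conditional expectation $\Econd[g(\hat x_t, \hat\alpha^t, i_t) \mid \hat x_t]$ directly, by linearity over the three terms in the definition~\eqref{eq:PIupdate} and by cancelling the two memory terms against each other. Throughout I would abbreviate $\bar{\hat\alpha}^t := \nicefrac{1}{n}\sum_{i=1}^n \hat\alpha_i^t$ for the (read) average of the historical gradients, so that $g(\hat x_t, \hat\alpha^t, i_t) = f'_{i_t}(\hat x_t) - \hat\alpha_{i_t}^t + D_{i_t}\bar{\hat\alpha}^t$.

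The crucial enabling fact is that $i_t$ is independent of the read quantities $\hat x_t$ \emph{and} $\hat\alpha^t$. This is exactly Property~\ref{independence} applied at $r = t$: in the ``after read'' ordering the sampling of $i_t$ (step~5 of Algorithm~\ref{alg:theoretical}) occurs only after the read block (steps~3--4) has completed, and since the reads do not depend on $i_t$, the uniform draw of $i_t$ is independent of both $\hat x_t$ and $\hat\alpha^t$. I would first make the independence of $i_t$ from $\hat\alpha^t$ explicit (it is the memory analogue of the stated property for $\hat x_t$), since the whole computation hinges on being able to average over $i_t$ while holding the stale read values fixed.

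Given this independence, each term reduces to a uniform average over $i \in \{1,\dots,n\}$. For the first term, $\Econd[f'_{i_t}(\hat x_t) \mid \hat x_t] = \nicefrac{1}{n}\sum_{i=1}^n f'_i(\hat x_t) = f'(\hat x_t)$. For the second, $\Econd[\hat\alpha_{i_t}^t] = \nicefrac{1}{n}\sum_{i=1}^n \hat\alpha_i^t = \bar{\hat\alpha}^t$. The third term is where the Sparse~\SAGA\ construction does its work: by the defining normalization of the reweighting matrix $D$ (its diagonal entries are $\nicefrac{1}{p_v}$, which yields $\Econd D_i \bar\alpha = \bar\alpha$ as established in Section~\ref{scs:sparse_saga}), we obtain $\Econd[D_{i_t}\bar{\hat\alpha}^t] = \bar{\hat\alpha}^t$.

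Summing the three contributions, the two copies of $\bar{\hat\alpha}^t$ cancel and we are left with $\Econd[g(\hat x_t, \hat\alpha^t, i_t) \mid \hat x_t] = f'(\hat x_t)$, which is precisely the unbiasedness condition~\eqref{eq:unbiasedness}. I do not anticipate a genuine technical obstacle here, as the statement is essentially the serial unbiasedness of the Sparse~\SAGA\ update transplanted to the perturbed-iterate setting; the only point requiring care is the dependency bookkeeping of the second paragraph. Indeed, the entire purpose of introducing the ``after read'' labeling and Property~\ref{independence} is to make this decoupling of $i_t$ from the stale reads legitimate — which, as the paper emphasizes, is exactly the step that fails silently under the ``after write'' framework.
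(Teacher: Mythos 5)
Your proposal is correct and matches the paper's own (brief) justification: the paper likewise combines the independence of $i_t$ from the read quantities (enforced by reading all of $x$ and the $\hat\alpha_j$ before sampling, per Property~\ref{independence} and line~7 of Algorithm~\ref{theoreticalgo}) with the normalization identity $\Econd D_i \bar\alpha = \bar\alpha$ established in Appendix~\ref{apxA}, so that the two memory terms cancel in conditional expectation. Your explicit remark that $\bar\alpha$ must be recomputed from the \emph{same} read $\hat\alpha^t$ values is exactly the point the paper stresses when contrasting the analyzed algorithm with the implementation of Appendix~\ref{apx:Bias}.
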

\vspace{-2mm}
This property is crucial for the analysis, as in most related literature. 
It follows by the independence of $i_t$ with $\hat{x}_t$ and from the computation of $\bar \alpha$ on line~7 of Algorithm~\ref{theoreticalgo}, which ensures that $\E \hat \alpha_i = 1/n \sum_{k=1}^n [\hat \alpha_k]_{S_i} = [\bar \alpha]_{S_i}$, making the update unbiased. In practice, recomputing $\bar \alpha$ is not optimal, but storing it instead introduces potential bias issues in the proof (as detailed in Appendix~\ref{apx:Bias}).

\begin{prop} [atomicity]
The shared parameter coordinate update of $[x]_v$ on line~11 is atomic.
\label{eventconst}
\end{prop}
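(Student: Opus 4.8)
The plan is to treat Property~\ref{eventconst} as an implementation guarantee rather than a derived mathematical fact, and to justify it in two steps: first by exhibiting a hardware primitive that realizes the atomic coordinate update, and then by verifying that coordinate-level atomicity is exactly the granularity required by the perturbed-iterate accounting in~\eqref{eq:xhatUpdates}.

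For the first step, I would observe that each coordinate $[x]_v$ is a single scalar occupying one machine word (a double-precision float on the 64-bit architectures we target). The update on line~11, namely $[x]_v \gets [x]_v + [\delta x]_v$, is therefore a read-modify-write on a single word, which modern processors expose directly as an atomic fetch-and-add instruction (or, failing that, can be emulated by a compare-and-swap loop). Invoking this primitive makes the read of the old value, the addition of $[\delta x]_v$, and the write-back indivisible: no other core can observe or modify $[x]_v$ between these sub-operations. The immediate consequence is the absence of \emph{lost updates}---the failure mode in which two cores read the same stale $[x]_v$, each adds its own increment, and the second write clobbers the first. Under atomicity every increment $-\stepsize[g(\hat x_u, \hat\alpha^u, i_u)]_v$ contributes exactly once to the running value of coordinate $v$.

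For the second step, I would emphasize the granularity: we only assert atomicity of \emph{per-coordinate} writes, not of the whole vector update, and in particular we do \emph{not} assume consistent reads (the reads $\hat x_t$ remain inconsistent, as stressed in Section~\ref{sec:ASAGA}). This is precisely what licenses the expression for $[\hat x_t]_v$ in~\eqref{eq:xhatUpdates}: because each write is atomic and contributes once, the value of coordinate $v$ seen by the $t$-th completed read equals $[x_0]_v$ minus $\stepsize$ times the sum of the increments $[g(\hat x_u, \hat\alpha^u, i_u)]_v$ over exactly those $u$ whose write to coordinate $v$ preceded that read. The ``after read'' labeling then partitions the writes into this well-defined set together with the remaining (``future'') writes, yielding the stated formula.

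I expect the main obstacle to be conceptual rather than computational: making clear why coordinate-level atomicity suffices while full-vector atomicity (which would demand locks and defeat the lock-free design) is unnecessary, and simultaneously why it remains compatible with inconsistent reads. The delicate point is that $\hat x_t$ may mix coordinates written at different real times, yet each individual coordinate still obeys the clean additive bookkeeping above---so the inconsistency is confined to \emph{which} subset of past increments each coordinate reflects, never to corrupted or half-written scalar values.
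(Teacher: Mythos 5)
Your proposal matches the paper's own treatment: Property~3 is justified there not as a derived fact but as an implementation guarantee enforced by compare-and-swap semantics, whose role is precisely to prevent lost updates (``since our updates are additions, \ldots there are no overwrites''), exactly as in your first step. Your additional remarks on why per-coordinate atomicity suffices and remains compatible with inconsistent reads correctly reflect how the paper uses this property to license the expression for $[\hat{x}_t]_v$ in~\eqref{eq:xhatUpdates}, so the proposal is correct and takes essentially the same route.
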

\vspace{-2mm}
Since our updates are additions, this means that there are no overwrites, even when several cores compete for the same resources. 
In practice, this is enforced by using \textit{compare-and-swap} semantics, which are heavily optimized at the processor level and have minimal overhead. 
Our experiments with non-thread safe algorithms (i.e. where this property is not verified, see Figure~\ref{fig:cas_comparison} of Appendix~\ref{apxE}) show that compare-and-swap is necessary to optimize to high accuracy.

Finally, as is standard in the literature, we make an assumption on the maximum delay that asynchrony can cause -- this is the \emph{partially asynchronous} setting as defined in~\citet{bertsekasParalle1989}:
\begin{asm}[bounded overlaps]\label{boundedoverlap}
We assume that there exists a uniform bound, called~$\overlap$, on the maximum number of iterations that can overlap together. We say that iterations $r$ and $t$ overlap if at some point they are processed concurrently. 
One iteration is being processed from the start of the reading of the shared parameters to the end of the writing of its update. 
The bound~$\overlap$ means that iterations $r$ cannot overlap with iteration $t$ for $r \geq t + \tau+1$,  and thus that every coordinate update from iteration $t$ is successfully written to memory before the iteration $t + \overlap+1$ starts.
\label{boundedoverlaps}
\end{asm}

\vspace{-2mm}
Our result will give us conditions on $\overlap$ subject to which we have linear speedups. 
$\overlap$ is usually seen as a proxy for $p$, the number of cores (which lowerbounds it). 
However, though $\overlap$ appears to depend linearly on $p$, it actually depends on several other factors (notably the data sparsity distribution) and can be orders of magnitude bigger than $p$ in real-life experiments.
We can upper bound $\overlap$ by $(p-1)R$, where $R$ is the ratio of the maximum over the minimum iteration time (which encompasses theoretical aspects as well as hardware overhead).
More details can be found in Appendix~\ref{apxD}.

\vspace{-2mm}
\paragraph{Explicit effect of asynchrony.}
By using the overlap Assumption~\ref{boundedoverlaps} in the expression~\eqref{eq:xhatUpdates} for the iterates, we obtain the following explicit effect of asynchrony that is crucially used in our proof:
\begin{align}\label{eq:async}
\hat x_t - x_t = \stepsize \sum_{u=(t - \overlap)_+}^{t-1}G_{u}^t g(\hat x_{u}, \hat \alpha^u, i_{u}),
\end{align}
where $G_{u}^t$ are $d\times d$ diagonal matrices with terms in $\{0, +1\}$. 
We know from our definition of~$t$ and~$x_t$ that every update in $\hat x_t$ is already in $x_t$ -- this is the $0$ case. 
Conversely, some updates might be late: this is the $+1$ case. 
$\hat x_t$ may be lacking some updates from the ``past" in some sense, whereas given our global ordering definition, it cannot contain updates from the ``future".

\vspace{-1mm}
\subsection{Convergence and speedup results}
\vspace{-2mm}
We now state our main theoretical results. We give an outline of the proof in Section~\ref{proofoutline} and its full details in Appendix~\ref{apxB}.
We first define a notion of problem sparsity, as it will appear in our results.

\begin{definition}[Sparsity]
As in~\citet{hogwild}, we introduce $\sparsityr := \max_{v=1..d} |\{i : v \in S_i\}|$. $\sparsityr$ is 
the maximum right-degree in the bipartite graph of the factors and the dimensions, i.e., 
the maximum number of data points with a specific feature. For succinctness, we also define $\sparsity := \sparsityr / n$. We have $1 \leq \sparsityr \leq n$, and hence $1/n \leq \sparsity \leq 1$.

\end{definition}

\begin{theorem}[Convergence guarantee and rate of \ASAGA]\label{thm:convergence}
Suppose $\overlap < n/10$.\footnote{\ASAGA\ can actually converge for any $\overlap$, but the maximum step size then has a term of $\exp(\overlap/n)$ in the denominator with much worse constants. See Appendix~\ref{apxB:lma3}.} Let
\vspace{-1mm}
\begin{equation}\label{eq:condition}
\begin{aligned}
&a^*(\overlap) := \frac{1}{32 \left(1+ \overlap  \sqrt \sparsity \right) \xi(\kappa, \sparsity, \overlap)}
\\
&\text{where } \xi(\kappa, \sparsity, \overlap) := \sqrt{1 + \frac{1}{8 \kappa}  \min\{\frac{1}{\sqrt{\sparsity}}, \overlap\} } \\
&\text{\small{(note that $\xi(\kappa, \sparsity, \overlap) \approx 1$ unless $\kappa < \nicefrac{1}{\sqrt{\sparsity}}  \,\, (\leq \sqrt{n})$)}}.
\end{aligned}
\end{equation}
For any step size $\stepsize = \frac{a}{L}$ with $a \leq a^*(\overlap)$, the inconsistent read iterates of Algorithm~\ref{alg:theoretical} converge in expectation at a geometric rate of at least: $\contraction(a) = \frac{1}{5} \min \big\{\frac{1}{n},  a \frac{1}{\kappa}\big\},$
i.e., $\E f(\hat x_t)-f(x^*) \leq (1-\rho)^t \,  \tilde C_0$, where $\tilde C_0$ is a constant independent of $t$  ($\approx \frac{n}{\stepsize}C_0$ with $C_0$ as defined in Theorem~\ref{th1}).
\end{theorem}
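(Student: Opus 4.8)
The plan is to follow the perturbed iterate framework, building a Lyapunov function $\lyapunov_t$ that combines the virtual iterate distance $\E\|x_t - x^*\|^2$ with a memory term proportional to $\frac{1}{n}\sum_i \E\|\alpha_i^t - f'_i(x^*)\|^2$ (as in the serial \SAGA\ proof of Theorem~\ref{th1}), and to derive a one-step contraction $\E\lyapunov_{t+1} \leq (1-\contraction)\E\lyapunov_t$ up to asynchrony-induced error terms that must be shown to be absorbed whenever $a \leq a^*(\overlap)$.

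First I would expand the virtual update $x_{t+1} = x_t - \stepsize g_t$ into $\|x_{t+1}-x^*\|^2 = \|x_t-x^*\|^2 - 2\stepsize\langle g_t, x_t - x^*\rangle + \stepsize^2\|g_t\|^2$ and take the conditional expectation $\Econd$ over $i_t$. By Property~\ref{independence} ($i_t$ independent of $\hat x_t$) together with the unbiasedness Property, $\Econd g_t = f'(\hat x_t)$, so the cross term splits as $\langle f'(\hat x_t), \hat x_t - x^*\rangle + \langle f'(\hat x_t), x_t - \hat x_t\rangle$. The first piece yields genuine descent through $\strongconvex$-strong convexity and $\lipschitz$-smoothness (producing both a negative $\strongconvex\|\hat x_t - x^*\|^2$ term and a suboptimality term $f(\hat x_t)-f(x^*)$); the second is the asynchrony error. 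Crucially, I would bound the variance $\Econd\|g_t\|^2$ using the standard \SAGA\ inequality that controls it by $f(\hat x_t)-f(x^*)$ and the memory term, rather than by a uniform gradient bound -- this is precisely where one avoids the inconsistent bounded-gradient assumption of \citet{mania}.

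The heart of the argument, and the main obstacle, is controlling the asynchrony error $\langle f'(\hat x_t), x_t - \hat x_t\rangle$ together with the analogous delayed contributions appearing in the memory recursion. Substituting the explicit formula~\eqref{eq:async}, $\hat x_t - x_t = \stepsize\sum_{u=(t-\overlap)_+}^{t-1} S_u^t g_u$, turns these into weighted sums of past updates $g_u$ over the overlap window of length $\overlap$. Applying Young's inequality converts each inner product into $\|f'(\hat x_t)\|^2$-type and $\|g_u\|^2$-type contributions; the sparsity enters here because the expectation over $i_t$ of the support projections obeys $\Econd\|P_{S_{i_t}} g_u\|^2 \leq \sparsity\,\|g_u\|^2$ (since each inclusion probability is at most $\sparsityr/n = \sparsity$), producing factors scaling like $\sqrt\sparsity$. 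This yields a coupled system in which $\E\|g_t\|^2$ depends on $\E\|g_u\|^2$ for the $\overlap$ preceding steps. The delicate part is closing this coupling: summing the bounds over the window gives a recursion whose effective contraction factor must remain below one, and it is exactly this requirement that forces $\overlap\sqrt\sparsity = \mathcal{O}(1)$ (hence the $1+\overlap\sqrt\sparsity$ factor in $a^*(\overlap)$) and produces the $\xi(\kappa,\sparsity,\overlap)$ correction.

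Finally, I would assemble the per-step bounds on the $x$-distance and on the memory term into the combined Lyapunov inequality, fix the coupling constant between the two pieces, and verify that for every $a \leq a^*(\overlap)$ the asynchrony terms are dominated so that $\E\lyapunov_{t+1}\leq(1-\contraction)\E\lyapunov_t$ with the claimed $\contraction$; unrolling gives geometric decay of $\E\lyapunov_t$ and hence of $\E\|x_t-x^*\|^2$. To reach the stated conclusion on $\hat x_t$, I would pass from the virtual iterate back to the read iterate by bounding $\E\|\hat x_t - x_t\|^2$ through the same window sum in~\eqref{eq:xhatUpdates}, and convert the squared distance into the suboptimality $\E f(\hat x_t)-f(x^*)$ via $\lipschitz$-smoothness, absorbing the $\nicefrac{n}{\stepsize}$ scaling into the constant $\tilde C_0$.
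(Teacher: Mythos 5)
Your skeleton matches the paper's: perturbed iterate expansion, unbiasedness via the independence of $i_t$ and $\hat x_t$, the window expansion $\hat x_t - x_t = \stepsize\sum_{u=(t-\overlap)_+}^{t-1}S_u^t g_u$, sparsity entering through $\Econd\|g_u\|_{i_t}^2\leq\sparsity\|g_u\|^2$, and a step-size condition forcing the $1+\overlap\sqrt\sparsity$ factor. However, the two hardest parts of the proof are missing or would not close as you describe them. First, your Lyapunov function is the serial one (iterate distance plus the memory term $\frac{1}{n}\sum_i\E\|\alpha_i^t-f'_i(x^*)\|^2$), and you aim for a one-step contraction $\E\lyapunov_{t+1}\leq(1-\contraction)\E\lyapunov_t$ with asynchrony errors "absorbed." But the asynchrony errors are weighted sums of $\E\|g_u\|^2$ for $u$ ranging over the past $\overlap$ (and, through the memory, up to $2\overlap$ and beyond) iterations; these past quantities are not controlled by the \emph{current} value of your Lyapunov function, so no one-step contraction of it can absorb them. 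This is exactly the obstruction the paper flags, and its resolution is the non-standard Lyapunov function $\lyapunov_t=\sum_{u=0}^t(1-\contraction)^{t-u}a_u$ summing over \emph{all past} iterate distances, combined with a careful bookkeeping of the coefficients $r_u^t$ multiplying each past suboptimality $e_u$ and a proof that they are all nonpositive under the step-size condition (Lemmas~\ref{lma:1}, \ref{lma:suboptgt} and \ref{lma:3}). Saying "summing the bounds over the window gives a recursion whose effective contraction factor must remain below one" names the difficulty without resolving it; this is where the $\xi(\kappa,\sparsity,\overlap)$ factor and the $\overlap<n/10$ hypothesis actually come from, via a second-order polynomial condition on $\stepsize$.

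Second, you treat the memory term "as in the serial \SAGA\ proof," but the serial recursion for $\frac{1}{n}\sum_i\E\|\alpha_i-f'_i(x^*)\|^2$ does not survive asynchrony: the read $\hat\alpha_{i}^t$ is inconsistent (each coordinate $l$ was last written at some random past time $u_{i,l}^t$ that differs across coordinates), and whether index $i$ was overwritten between times $u$ and $t$ is only determined outside the overlap window. The paper's Lemma~\ref{lma:suboptgt} handles this with a per-coordinate analysis bounding $\ind_{\{u_{i,l}^t=u\}}\leq\ind_{\{i_u=i\}}\,\ind_{\{i_v\neq i\ \forall v:\ u+\overlap+1\leq v\leq t-\overlap-1\}}$ and exploiting $i_v\perp\!\!\!\perp\hat x_u$ for $v\geq u$ (which is precisely what the ``after read'' ordering buys) to obtain the weights $(1-\frac1n)^{(t-2\overlap-u-1)_+}$ expressing $\E\|\hat\alpha_{i_t}^t-f'_{i_t}(x^*)\|^2$ as a weighted sum of past suboptimalities. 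Nothing in your sketch addresses this, and without it the variance bound $\E\|g_t\|^2\leq 4\lipschitz e_t+\cdots$ in terms of past $e_u$ is not established. Your final conversion to $\E f(\hat x_t)-f(x^*)$ via smoothness is a legitimate alternative to the paper's device of carrying an explicit $\stepsize e_t$ on the left of the Lyapunov inequality, but it is a minor point compared to the two gaps above.
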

\vspace{-2mm}
This result is very close to \SAGA's original convergence theorem, but with the maximum step size divided by an extra $1+ \overlap \sqrt{\sparsity}$ factor. Referring to~\citet{qsaga} and our own Theorem~\ref{th1}, the rate factor for \SAGA\ is $\min\{1/n, 1/\kappa\}$ up to a constant factor. Comparing this rate with Theorem~\ref{thm:convergence} and inferring the conditions on the maximum step size $a^*(\overlap)$, we get the following conditions on the overlap $\overlap$ for \ASAGA\ to have the same rate as \SAGA\ (comparing upper bounds).
\begin{corollary}[Speedup condition]\label{thm:bigdata}\label{thm:illcondition}
Suppose $\overlap \leq \mathcal{O}(n)$ and $\overlap \leq \mathcal{O}({\scriptstyle \frac{1}{\sqrt{\sparsity}}} \max\{1,\frac{n}{\kappa} \})$. Then using the step size $\stepsize = \nicefrac{a^*(\overlap)}{L}\,$ from~\eqref{eq:condition}, \ASAGA\ converges geometrically with rate factor $\Omega( \min\{\frac{1}{n}, \frac{1}{\kappa}\})$ (similar to \SAGA), and is thus linearly faster than its sequential counterpart up to a constant factor. Moreover, if $\overlap \leq \mathcal{O}(\frac{1}{\sqrt{\sparsity}})$, then a universal step size of $\Theta(\frac{1}{L})$ can be used for \ASAGA\ to be adaptive to local strong convexity with a similar rate to \SAGA\ (i.e., knowledge of $\kappa$ is not required).
\end{corollary}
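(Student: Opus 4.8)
The plan is to read Corollary~\ref{thm:bigdata} as an inversion of Theorem~\ref{thm:convergence}: Theorem~\ref{th1} (and \citet{qsaga}) tell us that the best rate factor of (Sparse) \SAGA\ is $\Theta(\min\{\nicefrac{1}{n},\nicefrac{1}{\kappa}\})$, attained at a constant step size $a=\Theta(1)$, whereas Theorem~\ref{thm:convergence} gives \ASAGA\ the rate $\contraction(a)=\frac{1}{5}\min\{\nicefrac{1}{n},\nicefrac{a}{\kappa}\}$ for every $a\le a^*(\overlap)$. So I would first reduce the whole statement to a single requirement on the maximal step size, namely $a^*(\overlap)=\Omega(\min\{1,\nicefrac{\kappa}{n}\})$. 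This reduction is elementary: plugging $a=a^*$ into the two branches of the $\min$ and comparing term by term with $\min\{\nicefrac{1}{n},\nicefrac{1}{\kappa}\}$ shows that matching the rate in the ill-conditioned case $\kappa\ge n$ needs $a^*=\Omega(1)$, and in the well-conditioned case $\kappa<n$ needs $a^*=\Omega(\nicefrac{\kappa}{n})$; both are summarised by $a^*=\Omega(\min\{1,\nicefrac{\kappa}{n}\})$, i.e.\ by $(1+\overlap\sqrt{\sparsity})\,\xi(\kappa,\sparsity,\overlap)=\mathcal O(\max\{1,\nicefrac{n}{\kappa}\})$ using the definition~\eqref{eq:condition}.

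Next I would control the two factors of this product separately. For the conditioning factor I would use $\xi^2=1+\frac{1}{8\kappa}\min\{\nicefrac{1}{\sqrt{\sparsity}},\overlap\}\le 1+\frac{1}{8\kappa\sqrt{\sparsity}}$, which shows $\xi=\Theta(1)$ as soon as $\kappa\ge\nicefrac{1}{\sqrt{\sparsity}}$ -- the ``standard'' regime flagged after~\eqref{eq:condition}. There the requirement collapses to $1+\overlap\sqrt{\sparsity}=\mathcal O(\max\{1,\nicefrac{n}{\kappa}\})$, i.e.\ $\overlap=\mathcal O(\nicefrac{1}{\sqrt{\sparsity}}\max\{1,\nicefrac{n}{\kappa}\})$, which is the second condition of the corollary (and one checks that in this regime it already implies $\overlap\le\mathcal O(n)$). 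The remaining super-well-conditioned regime $\kappa<\nicefrac{1}{\sqrt{\sparsity}}\,(\le\sqrt{n})$ is the delicate one, where $\xi$ genuinely exceeds a constant; here the target rate collapses to $\nicefrac{1}{n}$, and I would substitute the crude bounds $1+\overlap\sqrt{\sparsity}=\mathcal O(n\sqrt{\sparsity})$ (valid since $\overlap\le\mathcal O(n)$ and $n\sqrt{\sparsity}\ge\sqrt{n}\ge1$) and $\xi=\mathcal O((\kappa\sqrt{\sparsity})^{-1/2})$ to obtain $(1+\overlap\sqrt{\sparsity})\,\xi=\mathcal O(n\sparsity^{1/4}/\sqrt{\kappa})=\mathcal O(\nicefrac{n}{\kappa})$, the last step being exactly the regime hypothesis $\kappa<\nicefrac{1}{\sqrt{\sparsity}}$. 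Hence $a^*=\Omega(\nicefrac{\kappa}{n})$ and the $\nicefrac{1}{n}$ rate is recovered using only $\overlap\le\mathcal O(n)$ -- this is precisely why both conditions are stated, and together they cover every regime.

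For the ``moreover'' part I would observe that the stronger hypothesis $\overlap\le\mathcal O(\nicefrac{1}{\sqrt{\sparsity}})$ forces the delay prefactor to be constant, $1+\overlap\sqrt{\sparsity}=\Theta(1)$, so that $a^*(\overlap)=\Theta(\nicefrac{1}{\xi})$ no longer shrinks with $\overlap$. In the standard regime $\xi=\Theta(1)$, hence $a^*=\Theta(1)$ and a single step size $\stepsize=\Theta(\nicefrac{1}{L})$ -- independent of $\kappa$ -- satisfies $a\le a^*$; feeding $a=\Theta(1)$ back into $\contraction(a)$ yields $\Theta(\min\{\nicefrac{1}{n},\nicefrac{1}{\kappa}\})$ for whatever the true (local) value of $\kappa$ happens to be, which is exactly \SAGA's adaptivity to local strong convexity. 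The residual regime $\kappa<\nicefrac{1}{\sqrt{\sparsity}}$ is handled by the first part (the $\nicefrac{1}{n}$ rate holds), and $\overlap\le\nicefrac{1}{\sqrt{\sparsity}}\le\sqrt{n}\le n$ guarantees the standing $\overlap\le\mathcal O(n)$ requirement automatically.

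The hard part will be the conditioning factor $\xi$ in the super-well-conditioned regime $\kappa<\nicefrac{1}{\sqrt{\sparsity}}$: this is the one place where the clean ``$(1+\overlap\sqrt{\sparsity})=\mathcal O(\max\{1,\nicefrac{n}{\kappa}\})$'' heuristic fails, and one must argue that the growth of $\xi$ is exactly absorbed by the gain $\kappa<\nicefrac{1}{\sqrt{\sparsity}}$ and that the coarse $\nicefrac{1}{n}$ target (not $\nicefrac{1}{\kappa}$) is what should be matched. Everything else is bookkeeping of the constants hidden in the $\mathcal O/\Omega$ notation, and a reminder -- as the surrounding text stresses -- that the claim concerns comparing upper bounds on the rate, not a matching lower bound on the attainable speedup.
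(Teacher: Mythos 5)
Your proposal is correct and follows essentially the same route as the paper's proof: reduce the claim to $a^*(\tau)=\Omega(\min\{1,\kappa/n\})$, i.e.\ $(1+\tau\sqrt{\Delta})\,\xi=\mathcal{O}(\max\{1,n/\kappa\})$, then split on whether $\kappa \gtrless 1/\sqrt{\Delta}$, using $\xi=\Theta(1)$ in the first case and absorbing $\xi=\mathcal{O}((\kappa\sqrt{\Delta})^{-1/2})$ via $\kappa\sqrt{\Delta}\le 1$ together with $\tau\le\mathcal{O}(n)$ in the second --- exactly the paper's two sub-cases of its ``big data'' regime plus its ill-conditioned regime, merely repackaged. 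The only slight divergence is in the ``moreover'' part, where you defer the residual regime $\kappa<1/\sqrt{\Delta}$ to the first part whereas the paper simply asserts $\xi=\mathcal{O}(1)$ there; neither resolution fully pins down a literally $\kappa$-free $\Theta(1/L)$ step size in that corner, but this looseness is inherited from the paper rather than introduced by you.
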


\vspace{-2mm}
Interestingly, in the well-conditioned regime ($n > \kappa$, where \SAGA\ enjoys a range of stepsizes which all give the same contraction ratio), \ASAGA\ can get the same rate as \SAGA\ even in the non-sparse regime ($\sparsity = 1$) for $\overlap < \mathcal{O}(n/\kappa)$. This is in contrast to the previous work on asynchronous incremental gradient methods which required some kind of sparsity to get a theoretical linear speedup over their sequential counterpart~\citep{hogwild,mania}. 
In the ill-conditioned regime ($\kappa > n$), sparsity is required for a linear speedup, with a bound on $\overlap$ of $\mathcal{O}(\sqrt{n})$ in the best-case (though degenerate) scenario where $\sparsity = 1/n$.

\vspace{-2mm}
\paragraph{Comparison to related work.}
\begin{itemize}[leftmargin=1.5em, topsep=-1mm, itemsep=-0.7mm]
\item We give the first convergence analysis for an asynchronous parallel version of \SAGA\ (note that \citet{smola} only covers an epoch based version of \SAGA\ with random stopping times, a fairly different algorithm).
\item Theorem~\ref{thm:convergence} can be directly extended to a parallel extension of the \SVRG\ version from~\citet{qsaga}, which is adaptive to the local strong convexity with similar rates (see Appendix~\ref{apx:SVRGext}).
\item In contrast to the parallel \SVRG\ analysis from~\citet[Thm. 2]{smola}, our proof technique handles inconsistent reads and a non-uniform processing speed across $f_i$'s. 
Our bounds are similar (noting that $\sparsity$ is equivalent to theirs), except for the adaptivity to local strong convexity: \ASAGA\ does not need to know $\kappa$ for optimal performance, contrary to parallel \SVRG\ (see App.~\ref{apx:SVRGext} for more details).
\item In contrast to the \SVRG\ analysis from~\citet[Thm. 14]{mania}, we obtain a better dependence on the condition number in our rate ($1/\kappa$ vs. $1/\kappa^2$ for them) and on the sparsity (they get $\overlap \leq \mathcal{O}(\sparsity^{\nicefrac{-1}{3}})$), while we remove their gradient bound assumption.
We also give our convergence guarantee on $\hat{x}_t$ \emph{during} the algorithm, whereas they only bound the error for the ``last'' iterate $x_T$.
\end{itemize}

\vspace{-2mm}
\subsection{Proof outline}\label{proofoutline}
\vspace{-2mm}
We give here the outline of our proof. Its full details can be found in Appendix~\ref{apxB}.

Let $g_t := g(\hat x_t, \hat \alpha^t, i_t)$. By expanding the update equation~\eqref{eq:PIupdate} defining the virtual iterate $x_{t+1}$ and introducing $\hat x_t$ in the inner product term, we get:
\begin{equation}\label{eq:initrec0}
\begin{aligned}
\|x_{t+1} - x^*\|^2
&= \|x_t -x^*\|^2 
-2\stepsize\langle \hat x_t -x^*,  g_t\rangle 
\\
&\quad+2\stepsize\langle \hat x_t -x_t,  g_t\rangle 
+ \stepsize^2 \|g_t\|^2 .
\end{aligned}
\end{equation}
In the sequential setting, we require $i_t$ to be independent of $x_t$ to get unbiasedness. 
In the perturbed iterate framework, we instead require that $i_t$ is independent of $\hat x_t$ (see Property~\ref{independence}). 
This crucial property enables us to use the unbiasedness condition~\eqref{eq:unbiasedness} to write:
$\E \langle \hat x_t -x^*,  g_t\rangle 
= \E \langle \hat x_t -x^*,  f'(\hat x_t)\rangle$. We thus take the expectation of~\eqref{eq:initrec0} that allows us to use the $\mu$-strong convexity of $f$:\footnote{Here is our departure point with \citet{mania} who replaced the $f(\hat{x}_t)-f(x^*)$ term with the lower bound $\frac{\strongconvex}{2}\|\hat x_t - x^*\|^2$ in this relationship (see their Equation (2.4)), yielding an inequality too loose to get fast rates for \SVRG.} 
\begin{align}
\langle \hat x_t -x^*,  f'(\hat x_t)\rangle &\geq f(\hat x_t) -f(x^*) +\frac{\strongconvex}{2}\|\hat x_t - x^*\|^2 . \notag
\end{align}
With further manipulations on the expectation of~\eqref{eq:initrec0}, including the use of the standard inequality $\|a + b\|^2 \leq 2\|a\|^2 + 2\|b\|^2$ (see Section~\ref{app:RecursiveDerivation}), we obtain our basic recursive contraction inequality:
\begin{equation}
\begin{aligned} \label{eq:RecursiveIneq1}
a_{t+1} \leq 
&(1 -\frac{\stepsize \strongconvex}{2}) a_t 
+ \stepsize^2 \E \|g_t\|^2 -2\stepsize e_t
\\
&\underbrace{
	+ \stepsize\strongconvex \E\|\hat x_t - x_t\|^2 
	+ 2\stepsize \E \langle \hat x_t -x_t,  g_t\rangle
}_{\text{additional asynchrony terms}}\,,
\end{aligned}
\end{equation}
where $a_t := \E \|x_t - x^*\|^2$ and $e_t := \E f(\hat x_t) - f(x^*)$.

In the sequential setting, one crucially uses the negative suboptimality term $-2\stepsize e_t$ to cancel the variance term $\stepsize^2 \E \|g_t\|^2$ (thus deriving a condition on $\stepsize$).
Here, we need to bound the additional asynchrony terms using the same negative suboptimality in order to prove convergence and speedup for our parallel algorithm -- thus getting stronger constraints on the maximum step size.

The rest of the proof then proceeds as follows:
\vspace{-2mm}
\begin{itemize}[leftmargin=1.5em, topsep=1mm, itemsep=0mm]
	\item Lemma~\ref{lma:1}: we first bound the additional asynchrony terms in~\eqref{eq:RecursiveIneq1} in terms of past updates ($\E \|g_u\|^2, u\leq t$). 
	We achieve this by crucially using the expansion~\eqref{eq:async} for $x_t - \hat{x}_t$, together with the sparsity inequality~\eqref{sparseproduct} (which is derived from Cauchy-Schwartz, see Appendix~\ref{apxB:lma1}).
	
	\item Lemma~\ref{lma:suboptgt}: we then bound the updates $\E \|g_u\|^2$ with respect to past suboptimalities $(e_v)_{v\leq u}$. From our analysis of Sparse \SAGA\ in the sequential case:
	\begin{equation}
	\E\|g_t\|\sp{2} 
	\leq 2 \E \|f'_{i_t}(\hat x_t)-f'_{i_t}(x\sp{*})\|^2 
	+ 2 \E \|\hat \alpha_{i_t}\sp{t} - f'_{i_t}(x\sp{*})\|^2 \notag
	\end{equation}
	We bound the first term by $4Le_t$ using~\citet[Equation (8)]{qsaga}.
	To express the second term in terms of past suboptimalities, we note that it can be seen as an expectation of past first terms with an adequate probability distribution which we derive and bound.
	
	\item By substituting Lemma~\ref{lma:suboptgt} into Lemma~\ref{lma:1}, we get a master contraction inequality~\eqref{master} in terms of $a_{t+1}$, $a_t$ and $e_u, u\leq t$.
	
	\item We define a novel Lyapunov function $\lyapunov_t = \sum_{u=0}^t (1-\contraction)^{t-u}a_u$ and manipulate the master inequality to show that $\lyapunov_t$ is bounded by a contraction, subject to a maximum step size condition on~$\stepsize$ (given in Lemma~\ref{lma:3}, see Appendix~\ref{apxB:outline}).
	
	\item Finally, we unroll the Lyapunov inequality to get the convergence Theorem~\ref{thm:convergence}.
\end{itemize}

\section{Empirical results}\label{sec:results}
\begin{figure*}[ttt!]
	\centering
	\begin{subfigure}[t]{0.48\linewidth}
		\centering
		\includegraphics[width = 1.05\linewidth]{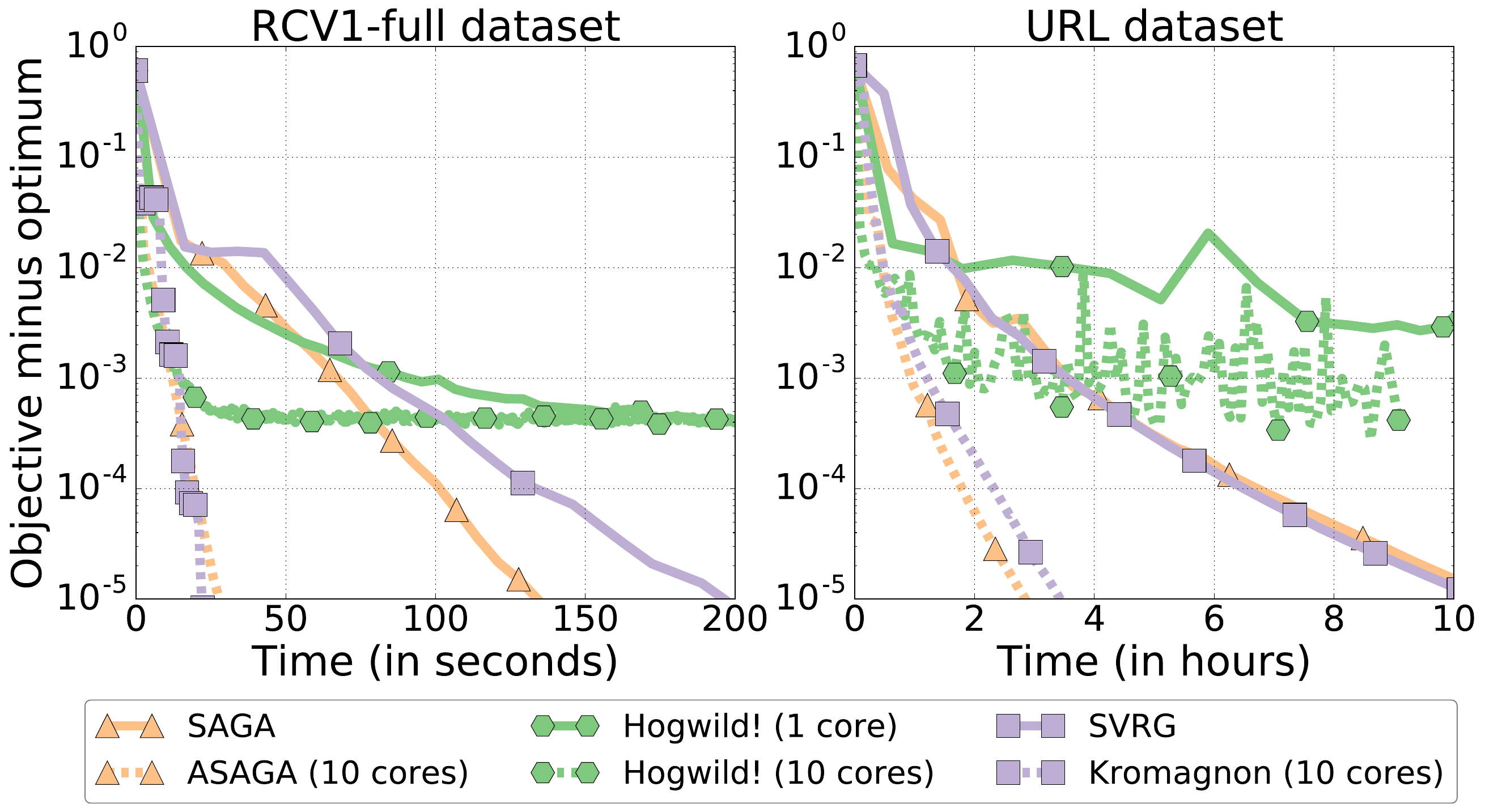}
		\caption{Suboptimality as a function of time.}
		\label{fig:fig_2}
	\end{subfigure}
	\hfill
	\begin{subfigure}[t]{0.48\linewidth}
		\centering
		\includegraphics[width = 1\linewidth]{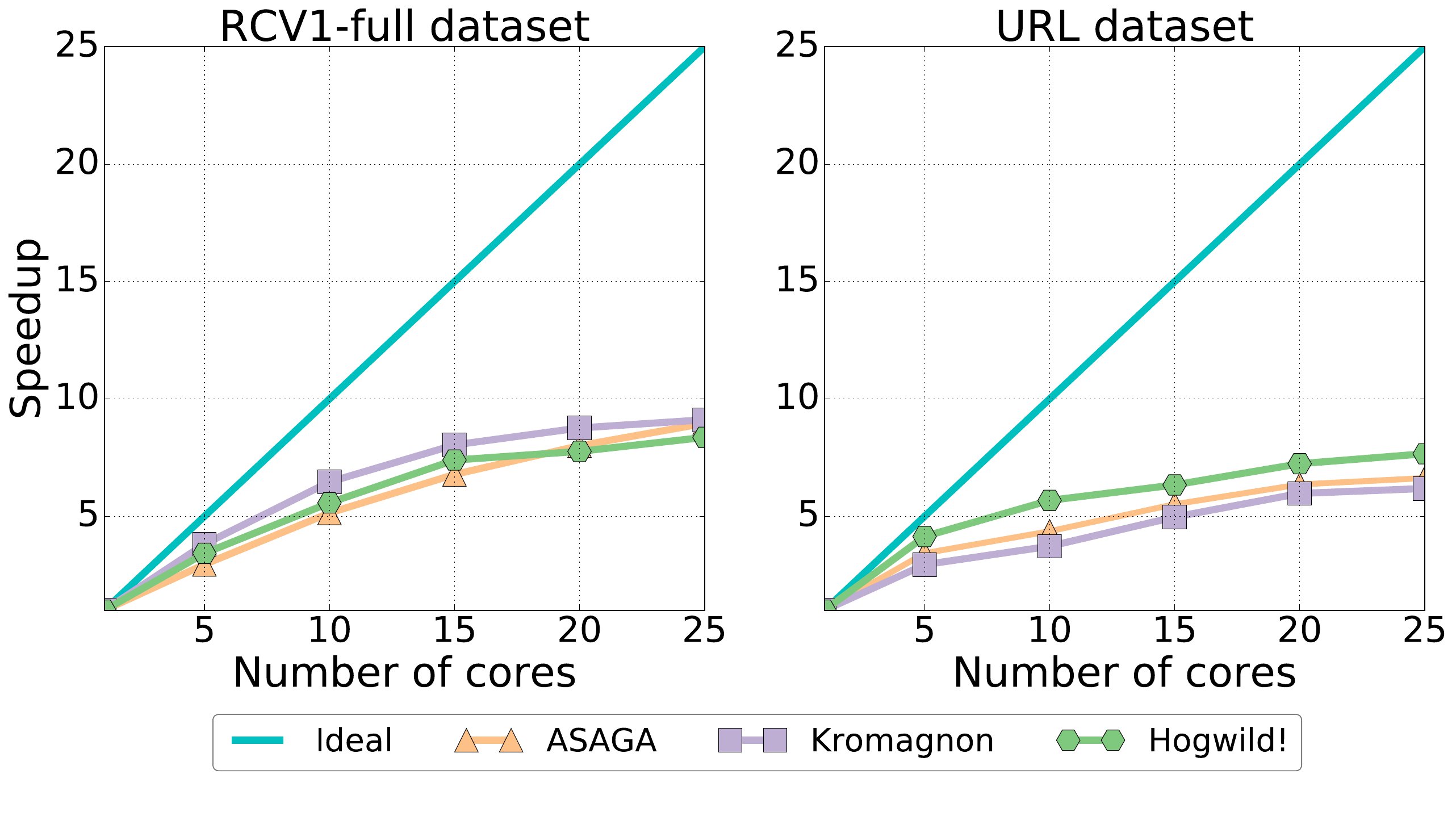}
		\caption{Speedup as a function of the number of cores}
		\label{fig:fig_3}
	\end{subfigure}
	\caption{ {\bf Convergence and speedup for asynchronous stochastic gradient descent methods}.
		We display results for RCV1 and URL. Results for Covtype can be found in Appendix~\ref{apx:speedup}. } 
	\vspace{-3.5mm}
\end{figure*}
\vspace{-2mm}
We now present the main results of our empirical comparison of asynchronous \SAGA , \SVRG\ and \Hogwild. 
Additional results, including convergence and speedup figures with respect to the number of iteration and measures on the $\overlap$ constant are available in the appendix.
\vspace{-6mm}
\subsection{Experimental setup}\label{ImplDetails}
\vspace{-2mm}
{\bf Models.} 
Although \ASAGA\ can be applied more broadly, we focus on logistic regression, a model of particular practical importance. 
The associated objective function takes the following form:
$
{\frac{1}{n} \sum_{i=1}^n \log\big(1 + \exp(- b_i a_i^\intercal x)\big)} + \frac{\lambda}{2} \|x\|^2,
$
where $a_i \in \mathbb{R}^p$ and $b_i \in \{-1,+1\}$ are the data samples.

{\bf Datasets.}
We consider two sparse datasets: RCV1~\citep{RCV1} and URL~\citep{URL}; and a dense one, Covtype~\citep{Covtype}, with statistics listed in the table below. 
As in~\citet{SAG}, Covtype is standardized, thus $100\%$ dense.  
$\sparsity$ is $\mathcal{O}(1)$ in all datasets, hence not very insightful when relating it to our theoretical results.
Deriving a less coarse sparsity bound remains an open problem.

\begin{table}[h]
\resizebox{0.48\textwidth}{!}{
\begin{tabular}{lcccc}
\toprule
{} & $n$ & $d$ & density & $\lipschitz$\\
\midrule
{\bf RCV1} & \hfill 697,641 & \hfill 47,236 & \hfill 0.15\% & \hfill 0.25\\ 
{\bf URL} & \hfill 2,396,130 & \hfill 3,231,961 & \hfill 0.004\% & \hfill 128.4\\
{\bf Covtype} & \hfill 581,012 & \hfill 54 & \hfill 100\% & \hfill 48428\\
\bottomrule
\end{tabular}
}
\end{table}

\vspace{-1mm}
{\bf Hardware and software}. 
Experiments were run on a 40-core machine with 384GB of memory. 
All algorithms were implemented in Scala. We chose this high-level language despite its typical 20x slowdown compared to C (when using standard libraries, see Appendix~\ref{scalavsc}) because our primary concern was that the code may easily be reused and extended for research purposes (to this end, we have made all our code available at \url{https://github.com/RemiLeblond/ASAGA}).

\vspace{-1mm}
\subsection{Implementation details}
\vspace{-2mm}
{\bf Exact regularization.} 
Following~\citet{laggedsaga}, the amount of regularization used was set to $\lambda=1 / n$.
In each update, we project the gradient of the regularization term (we multiply it by $D_i$ as we also do with the vector $\bar{\alpha}$) to preserve the sparsity pattern while maintaining an unbiased estimate of the gradient.
For squared $\ell_2$, the Sparse \SAGA\ updates becomes:
$
x^+ = x - \gamma (f_i'(x) - \alpha_i + D_i \bar{\alpha} + \lambda D_i x).
$

{\bf Comparison with the theoretical algorithm.} 
The algorithm we used in the experiments is fully detailed in Algorithm~\ref{alg:sagasync}. 
There are two differences with Algorithm~\ref{alg:theoretical}. 
First, in the implementation we pick $i_t$ at random \textit{before} we read data. 
This enables us to only read the necessary data for a given iteration (i.e.~$[\hat x_t]_{S_i}, [\hat \alpha_i^t], [\bar \alpha^t]_{S_i}$). 
Although this violates Property~\ref{independence}, it still performs well in practice. 

Second, we maintain $\bar \alpha^t$ in memory. 
This saves the cost of recomputing it at every iteration (which we can no longer do since we only read a subset data). 
Again, in practice the implemented algorithm enjoys good performance. 
But this design choice raises a subtle point: the update is not guaranteed to be unbiased in this setup (see Appendix~\ref{apx:Bias} for more details). 

\vspace{-2mm}
\subsection{Results}\label{ssec:results}
\vspace{-2mm}
We first compare three different asynchronous variants of stochastic gradient methods on the aforementioned datasets: \ASAGA, presented in this work, \KROMAGNON, the asynchronous sparse \SVRG\ method described in~\citet{mania} and \Hogwild~\citep{hogwild}.
Each method had its step size chosen so as to give the fastest convergence (up to $10^{-3}$ in the special case of \Hogwild). 
The results can be seen in Figure~\ref{fig:fig_2}: for each method we consider its asynchronous version with both one (hence sequential) and ten processors. 
This figure reveals that the asynchronous version offers a significant speedup over its sequential counterpart.

We then examine the speedup relative to the increase in the number of cores. 
The speedup is measured as time to achieve a suboptimality of $10^{-5}$ ($10^{-3}$ for \Hogwild) with one core divided by time to achieve the same suboptimality with several cores, averaged over 3 runs. 
Again, we choose step size leading to fastest convergence (see Appendix~\ref{scalavsc} for information about the step sizes). Results are displayed in Figure~\ref{fig:fig_3}. 

As predicted by our theory, we observe linear ``theoretical'' speedups (i.e. in terms of number of iterations, see Appendix~\ref{apx:speedup}).
However, with respect to running time, the speedups seem to taper off after $20$ cores.
This phenomenon can be explained by the fact that our hardware model is by necessity a simplification of reality. 
As noted in~\citet{duchi}, in a modern machine there is no such thing as \textit{shared memory}. 
Each core has its own levels of cache (L1, L2, L3) in addition to RAM. 
The more cores are used, the lower in the memory stack information goes and the slower it gets. 
More experimentation is needed to quantify that effect and potentially increase performance.

\vspace{-1.5mm}
\section{Conclusions and future work}
\vspace{-2.5mm}
We have described \ASAGA, a novel sparse and fully asynchronous variant of the incremental gradient algorithm \SAGA.
Building on the recently proposed ``perturbed iterate'' framework, we have introduced a novel analysis of the algorithm and proven that under mild conditions \ASAGA\ is linearly faster than \SAGA.
Our empirical benchmarks confirm speedups up to 10x.

Our proof technique accommodates more realistic settings than is usually the case in the literature (e.g. inconsistent reads/writes and an unbounded gradient); we obtain tighter conditions than in previous work. 
In particular, we show that sparsity is not always necessary to get linear speedups.
Further, we have proposed a novel perspective to clarify an important technical issue present in most of the recent convergence rate proofs for asynchronous parallel optimization algorithms.

\citet{laggedsaga} have shown that \SAG\ enjoys much improved performance when combined with non-uniform sampling and line-search.
We have also noticed that our $\sparsityr$ constant (being essentially a maximum) sometimes fails to accurately represent the full sparsity distribution of our datasets.
Finally, while our algorithm can be directly ported to a distributed master-worker architecture, its communication pattern would have to be optimized to avoid prohibitive costs. Limiting communications can be interpreted as artificially increasing the delay, yielding an interesting trade-off between delay influence and communication costs.

A final interesting direction for future analysis is the further exploration of the $\overlap$ term, which we have shown encompasses more complexity than previously thought.

\subsubsection*{Acknowledgments}
%
We would like to thank Xinghao Pan for sharing with us their implementation of \KROMAGNON, as well as Alberto Chiappa for spotting a typo in the proof.
This work was partially supported by a Google Research Award and the MSR-Inria Joint Center.
FP acknowledges financial support from
from the chaire {\em \'Economie des nouvelles donn\'ees} with the {\em data science} joint research initiative with the {\em fonds AXA pour la recherche}.

\bibliography{Asaga}
\bibliographystyle{abbrvnat}

\clearpage
\appendix
\onecolumn
\fontsize{11}{13}
\selectfont

\paragraph{Appendix Outline:}
\begin{itemize}
\item In Appendix~\ref{apx:ProblematicExample}, we give a simple example illustrating why the ``After Write'' approach can break the crucial unbiasedness condition~\eqref{eq:unbiasedness} needed for standard convergence proofs.
\item In Appendix~\ref{apxA}, we adapt the proof from~\citet{qsaga} to prove Theorem~\ref{th1}, our convergence result for serial Sparse \SAGA.
\item In Appendix~\ref{apxB}, we first give a detailed outline and then the complete details for the proof of convergence for~\ASAGA\ (Theorem~\ref{thm:convergence}) as well as its linear speedup regimes  (Corollary~\ref{thm:bigdata}).
\item In Appendix~\ref{apx:AER}, we analyze additional experimental results, including a comparison of serial \SAGA\ algorithms and a look at ``theoretical speedups'' for \ASAGA.
\item In Appendix~\ref{apxD}, we take a closer look at the $\overlap$ constant. We argue that it encompasses more complexity than is usually implied in the literature, as additional results that we present indicate.
\item In Appendix~\ref{apxC}, we compare the lagged updates implementation of \SAGA\ with our sparse algorithm, and explain why adapting the former to the asynchronous setting is difficult.
\item In Appendix~\ref{apxE}, we give additional details about the datasets and our implementation.
\end{itemize}

\section{Problematic Example for the ``After Write'' Approach} \label{apx:ProblematicExample}
We provide a concrete example to illustrate the non-independence issue arising from the ``after write'' approach. 
Suppose that we have two cores and that $f$ has two factors: $f_1$ which has support on only one variable, and $f_2$ which has support on $10^6$ variables and thus yields a gradient step that is significantly more expensive to compute. 
In the ``after write'' approach, $x_0$ is the initial content of the memory, and we do not officially know yet whether $\hat{x}_0$ is the local copy read by the first core or the second core, but we are sure that $\hat{x}_0 = x_0$ as no update can occur in shared memory without incrementing the counter. 
There are four possibilities for the next step defining $x_1$ depending on which index $i$ was sampled on each core. 
If any core samples $i=1$, we know that $x_1 = x_0 - \stepsize f'_1(x_0)$ as it will be the first (much faster update) to complete. 
This happens in 3 out of 4 possibilities; we thus have that $\E x_1 = x_0 - \stepsize (\frac{3}{4} f'_1(x_0) + \frac{1}{4} f '_2(x_0))$ -- we see that this analysis scheme \emph{does not} satisfy the crucial unbiasedness condition~\eqref{eq:unbiasedness}. 

To understand this subtle point better, note that in this very simple example, $i_0$ and $i_1$ are not independent. We can show that $P(i_1=2 \mid i_0=2) =1$. They share dependency through the labeling assignment.
 
The only way we can think to resolve this issue and ensure unbiasedness in the ``after write'' framework is to assume that the computation time for the algorithm running on a core is independent of the sample $i$ chosen.
This assumption seems overly strong in the context of potentially heterogeneous factors $f_i$'s, and is thus a fundamental flaw in the ``after write'' framework that has mostly been ignored in the recent asynchronous optimization literature. 

We note that \citet{bertsekasParalle1989} briefly discussed this issue in Section~7.8.3 of their book, stressing that their analysis for SGD required that the scheduling of computation was independent from the randomness from SGD, but they did not offer any solution if this assumption was not satisfied. Both the ``before read'' labeling from~\citet{mania} and our proposed ``after read'' labeling resolve this issue.

\clearpage

\section{Proof of Theorem~\ref{th1}}\label{apxA}
\paragraph{Proof sketch for~\citet{qsaga}.}
As we will heavily reuse the proof technique from~\citet{qsaga}, we start by giving its sketch.

First, the authors combine classical strong convexity and Lipschitz inequalities to derive the inequality~\citet[Lemma~1]{qsaga}:
\begin{align}\label{eq:sparse}
\Econd \|x^{+} \! - \!x^*\|^2 \leq 
&(1 \! - \! \stepsize\strongconvex) \|x \! -\! x^*\|^2 
+ 2\stepsize^2 \Econd \|\alpha_i - f'_i(x^*)\|^2
+ (4 \stepsize^2 \lipschitz-2\stepsize)\big(f(x) - f(x^*)\big).
\end{align}
This gives a contraction term, as well as two additional terms; $2\stepsize^2 \Econd \|\alpha_i - f'_i(x^*)\|^2$ is a positive variance term, but $(4 \stepsize^2 \lipschitz-2\stepsize)\big(f(x) - f(x^*)\big)$ is a negative suboptimality term (provided $\stepsize$ is small enough).
The suboptimality term can then be used to cancel the variance one.

Second, the authors use a classical smoothness upper bound to control the variance term and relate it to the suboptimality.
However, since the $\alpha_i$ are partial gradients computed at previous time steps, the upper bounds of the variance involve suboptimality at previous time steps, which are not directly relatable to the current suboptimality.

Third, to circumvent this issue, a Lyapunov function is defined to encompass both current and past terms.
To finish the proof,~\citet{qsaga} show that the Lyapunov function is a contraction.

\paragraph{Proof outline.}\label{sparseoutline}
Fortunately, we can reuse most of the proof from~\citet{qsaga} to show that Sparse \SAGA\ converges at the same rate as regular \SAGA.
In fact, once we establish that \citet[Lemma~1]{qsaga} is still verified we are done.

To prove this, we derive close variants of equations $(6)$ and $(9)$ in their paper, which we remind the reader of here:
\begin{align}
\Econd \|f'_i(x) - \bar \alpha_i\|^2 &\leq 2 \Econd \|f'_i(x) - f'_i(x^*)\|^2 + 2 \Econd \|\bar \alpha_i - f'_i(x^*)\|^2 \, ,
\tag*{\citet[Eq.(6)]{qsaga}}
\\
\Econd \|\bar \alpha_i - f'_i(x^*)\|^2 &\leq \Econd \|\alpha_i - f'_i(x^*)\|^2 \, .
\tag*{\citet[Eq.(9)]{qsaga}}
\end{align} 

\paragraph{Deriving~\citet[Equation (6)]{qsaga}.}
We first show that the update estimator is unbiased.
The estimator is unbiased if:
\begin{align}\label{eq:apxBias}
\Econd D_i \bar \alpha = \Econd \alpha_i = \frac{1}{n}\sum_{i=1}^n \alpha_i \, .
\end{align}

We have:
\begin{align*}
\Econd D_i \bar \alpha 
= \frac{1}{n} \sum_{i=1}^n D_i \bar \alpha
= \frac{1}{n} \sum_{i=1}^n P_{S_i} D \bar \alpha
= \frac{1}{n} \sum_{i=1}^n \sum_{v \in S_i} \frac{[\bar \alpha]_v e_v}{p_v}
= \sum_{v=1}^{d} \left( \sum_{i\, | \, v \in S_i} 1 \right) \frac{[\bar \alpha]_v e_v}{n p_v}  \, ,
\end{align*}
where $e_v$ is the vector whose only nonzero component is the $v$ component which is equal to $1$.

By definition, $\sum_{i|v \in S_i} 1 = n p_v,$ which gives us Equation~\eqref{eq:apxBias}.

We define $\bar \alpha_i := \alpha_i - D_i\bar \alpha$ (contrary to~\citet{qsaga} where the authors define $\bar \alpha_i := \alpha_i - \bar \alpha$ since they do not concern themselves with sparsity).
Using the inequality $\|a+b\|^2 \leq 2 \|a\|^2 + 2 \|b\|^2$, we get:
\begin{align}
\Econd \|f'_i(x) - \bar \alpha_i\|^2 
\leq 2\Econd \|f'_i(x) -f'_i(x^*)\|^2 + 2\Econd \|\bar\alpha_i -f'_i(x^*)\|^2,
\end{align}
which is our equivalent to~\citet[Eq.(6)]{qsaga}, where only our definition of $\bar \alpha_i$ differs.

\paragraph{Deriving~\citet[Equation (9)]{qsaga}.}
We want to prove~\citet[Eq.(9)]{qsaga}:
\begin{align}
\Econd \|\bar \alpha_i -f'_i(x^*)\|^2
\leq \Econd \|\alpha_i -f'_i(x^*)\|^2 .
\end{align}

We have:
\begin{align} \label{eq:alphaiBarVariance}
\Econd \|\bar \alpha_i -f'_i(x^*)\|^2
&= \Econd \|\alpha_i -f'_i(x^*)\|^2
	-2\Econd \langle \alpha_i - f'_i(x^*), D_i \bar \alpha \rangle + \Econd\|D_i\bar \alpha\|^2 .
\end{align}

Let $D_{\neg i} := P_{S_i^c} D$; we then have the orthogonal decomposition $D \alpha = D_i \alpha + D_{\neg i} \alpha$ with $D_i \alpha \perp D_{\neg i} \alpha$, as they have disjoint support. We now use the orthogonality of $D_{\neg i} \alpha$ with any vector with support in $S_i$ to simplify the expression~\eqref{eq:alphaiBarVariance} as follows:
\begin{align}
\Econd \langle \alpha_i - f'_i(x^*), D_i\bar \alpha \rangle
&= \Econd \langle \alpha_i - f'_i(x^*), D_i \bar \alpha + D_{\neg i} \bar \alpha \rangle 
	\tag*{$(\alpha_i - f'_i(x^*) \perp D_{\neg i} \alpha)$}
\nonumber \\
&= \Econd \langle \alpha_i - f'_i(x^*), D \bar \alpha \rangle
\nonumber \\
&= \langle \Econd \big(\alpha_i - f'_i(x^*)\big), D \bar \alpha \rangle
\nonumber \\
&= \langle \Econd \alpha_i, D \bar \alpha \rangle
	\tag*{($f'(x^*) = 0$)}
\nonumber \\
&=\bar \alpha^\intercal D \bar \alpha \,  .
\end{align}

Similarly, 
\begin{align}
\Econd \|D_i\bar \alpha\|^2 
&= \Econd\langle D_i\bar \alpha, D_i\bar \alpha \rangle
\nonumber \\
&= \Econd\langle D_i\bar \alpha, D \bar \alpha \rangle
	\tag*{($D_i \alpha \perp D_{\neg i} \alpha$)}
\nonumber \\
&= \langle \Econd D_i\bar \alpha, D \bar \alpha \rangle
\nonumber \\
&= \bar \alpha^\intercal D \bar \alpha  \, .
\end{align}

Putting it all together,
\begin{align}
\Econd \|\bar \alpha_i -f'_i(x^*)\|^2 
= \Econd \|\alpha_i -f'_i(x^*)\|^2 - \bar \alpha^\intercal D \bar \alpha
\leq \Econd \|\alpha_i -f'_i(x^*)\|^2 .
\end{align}

This is our version of~\citet[Equation (9)]{qsaga}, which finishes the proof of~\citet[Lemma~1]{qsaga}. 
The rest of the proof from~\citet{qsaga} can then be reused without modification to obtain Theorem~\ref{th1}.
\qed

\section{Proof of Theorem~\ref{thm:convergence} and Corollary~\ref{thm:bigdata}}\label{apxB}
\subsection{Detailed outline}\label{apxB:outline}

We first give a detailed outline of the proof. The complete proof is given in the rest of Appendix~\ref{apxB}.

\paragraph{Initial recursive inequality.}
Let $g_t := g(\hat x_t, \hat \alpha^t, i_t)$. From the update equation~\eqref{eq:PIupdate} defining the virtual iterate $x_{t+1}$, the perturbed iterate framework~\citep{mania} gives:
\begin{align}\label{eq:initrec}
\|x_{t+1} - x^*\|^2 
&= \|x_t -\stepsize g_t -x^*\|^2 
\nonumber\\
&= \|x_t -x^*\|^2 + \stepsize^2 \|g_t\|^2  -2\stepsize\langle x_t -x^*,  g_t\rangle
\nonumber\\
&= \|x_t -x^*\|^2 + \stepsize^2 \|g_t\|^2 
	-2\stepsize\langle \hat x_t -x^*,  g_t\rangle +2\stepsize\langle \hat x_t -x_t,  g_t\rangle  \, .
\end{align}

Note that we have introduced $\hat x_t$ in the inner product because $g_t$ is a function of $\hat x_t$, not $x_t$.

In the sequential setting, we require $i_t$ to be independent of $x_t$ to get unbiasedness. 
In the perturbed iterate framework, we instead require that $i_t$ is independent of $\hat x_t$ (see Property~\ref{independence}). 
This crucial property enables us to use the unbiasedness condition~\eqref{eq:unbiasedness} to write:
$\E \langle \hat x_t -x^*,  g_t\rangle 
= \E \langle \hat x_t -x^*,  f'(\hat x_t)\rangle$. We thus take the expectation of~\eqref{eq:initrec} that allows us to use the $\mu$-strong convexity of $f$:\footnote{Note that here is our departure point with \citet{mania} who replaced the $f(\hat{x}_t)-f(x^*)$ term with the lower bound $\frac{\strongconvex}{2}\|\hat x_t - x^*\|^2$ in this relationship (see their Equation (2.4)), thus yielding an inequality too loose afterwards to get the fast rates for \SVRG.} 
\begin{align} \label{eq:strongconvexity}
\langle \hat x_t -x^*,  f'(\hat x_t)\rangle &\geq f(\hat x_t) -f(x^*) +\frac{\strongconvex}{2}\|\hat x_t - x^*\|^2 . 
\end{align}
With further manipulations on the expectation of~\eqref{eq:initrec}, including the use of the standard inequality $\|a + b\|^2 \leq 2\|a\|^2 + 2\|b\|^2$ (see Section~\ref{app:RecursiveDerivation}), we obtain our basic recursive contraction inequality:
\begin{align} \label{eq:RecursiveIneq2}
a_{t+1} &\leq 
	(1 -\frac{\stepsize \strongconvex}{2}) a_t 
	+ \stepsize^2 \E \|g_t\|^2 
	\underbrace{
		+ \stepsize\strongconvex \E\|\hat x_t - x^*\|^2 
		+ 2\stepsize \E \langle \hat x_t -x_t,  g_t\rangle
	}_{\text{additional asynchrony terms}}
	-2\stepsize e_t  \, ,
\end{align}
where $a_t := \E \|x_t - x^*\|^2$ and $e_t := \E f(\hat x_t) - f(x^*)$.

Inequality~\eqref{eq:RecursiveIneq2} is a midway point between the one derived in the proof of Lemma~1 in~\citet{qsaga} and Equation~(2.5) in~\citet{mania}, because we use the tighter strong convexity bound~\eqref{eq:strongconvexity} than in the latter (giving us the important extra term $-2\stepsize e_t$). 

In the sequential setting, one crucially uses the negative suboptimality term $-2\stepsize e_t$ to cancel the variance term $\stepsize^2 \E \|g_t\|^2$ (thus deriving a condition on $\stepsize$).
In our setting, we need to bound the additional asynchrony terms using the same negative suboptimality in order to prove convergence and speedup for our parallel algorithm -- this will give stronger constraints on the maximum step size.

The rest of the proof then proceeds as follows:
\begin{enumerate}
\item By using the expansion~\eqref{eq:async} for $\hat{x}_t-x_t$, we can bound the additional asynchrony terms in~\eqref{eq:RecursiveIneq2} in terms of the past updates ($\E \|g_u\|^2, u\leq t$). This gives Lemma~\ref{lma:1} below.
\item We then bound the updates $\E \|g_t\|^2$ in terms of past suboptimalities $(e_u)_{u \leq v}$ by using standard \SAGA\ inequalities and carefully analyzing the update rule for $\alpha_i^+$~\eqref{eq:SAGAupdate} in expectation. This gives Lemma~\ref{lma:suboptgt} below.
\item By substituting Lemma~\ref{lma:suboptgt} into Lemma~\ref{lma:1}, we get a master contraction inequality~\eqref{master} in terms of $a_{t+1}$, $a_t$ and $e_u, u\leq t$.
\item We define a novel Lyapunov function $\lyapunov_t = \sum_{u=0}^t (1-\contraction)^{t-u}a_u$ and manipulate the master inequality to show that $\lyapunov_t$ is bounded by a contraction, subject to a maximum step size condition on $\stepsize$ (given in Lemma~\ref{lma:3} below).
\item Finally, we unroll the Lyapunov inequality to get the convergence Theorem~\ref{thm:convergence}.
\end{enumerate}
We list the key lemmas below with their proof sketch, and give the detailed proof in the later sections of Appendix~\ref{apxB}.

\begin{lemma}[Inequality in terms of $g_t := g(\hat x_{t}, \hat \alpha^t, i_{t})$]\label{lma:1}
For all $t \geq 0$:
\begin{equation} \label{eq:recursivegt}
a_{t+1} \leq 
	(1 - \frac{\stepsize\strongconvex}{2}) a_t + \stepsize^2 C_1\E\|g_t\|^2 
	+ \stepsize^2 C_2\sum_{u=(t-\overlap)_+}^{t-1}\E\|g_{u}\|^2 - 2\stepsize e_t  \, ,
\end{equation}
where 
$C_1 := 1 + \sqrt{\sparsity}\overlap$ and
$C_2 :=  \sqrt{\sparsity} + \stepsize\strongconvex C_1$.
\end{lemma}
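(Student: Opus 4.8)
The plan is to prove Lemma~\ref{lma:1} by starting from the basic recursive contraction inequality~\eqref{eq:RecursiveIneq2} and bounding the two ``additional asynchrony terms'' --- namely $\stepsize\strongconvex \E\|\hat x_t - x_t\|^2$ and $2\stepsize \E \langle \hat x_t -x_t,  g_t\rangle$ --- in terms of the squared norms of past updates $\E\|g_u\|^2$ for $u$ in the overlap window $[(t-\overlap)_+, t-1]$. (Note that~\eqref{eq:RecursiveIneq2} as displayed has $\E\|\hat x_t - x^*\|^2$, but the intended asynchrony term is $\E\|\hat x_t - x_t\|^2$, matching~\eqref{eq:RecursiveIneq1}; this is what couples to the asynchrony expansion.) The central tool is the explicit expansion~\eqref{eq:async}, $\hat x_t - x_t = \stepsize \sum_{u=(t-\overlap)_+}^{t-1} S_u^t g_u$, where the $S_u^t$ are diagonal $0/1$ matrices. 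Substituting this into both asynchrony terms reduces everything to controlling inner products and norms of sums of at most $\overlap$ past gradient updates, each projected by a sparse diagonal matrix.

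First I would handle the term $\stepsize\strongconvex \E\|\hat x_t - x_t\|^2$. Plugging in~\eqref{eq:async} gives $\stepsize^3 \strongconvex \E\|\sum_u S_u^t g_u\|^2$. Expanding the square produces diagonal terms $\|S_u^t g_u\|^2 \leq \|g_u\|^2$ and cross terms $\langle S_u^t g_u, S_w^t g_w\rangle$. The key idea is to bound the cross terms using the sparsity inequality~\eqref{sparseproduct} alluded to in the proof outline (derived from Cauchy--Schwarz), which should yield a factor of $\sqrt{\sparsity}$ on the off-diagonal contributions; combined with the fact that there are at most $\overlap$ terms in the sum, this converts $\E\|\hat x_t - x_t\|^2$ into something proportional to $\overlap(1+\sqrt{\sparsity}\,\overlap)$ times a sum of past $\E\|g_u\|^2$, with the appropriate $\stepsize$ powers. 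I would then treat the inner-product term $2\stepsize \E\langle \hat x_t - x_t, g_t\rangle = 2\stepsize^2 \E\langle \sum_u S_u^t g_u, g_t\rangle$ similarly, again applying the sparsity inequality~\eqref{sparseproduct} to each $\E\langle S_u^t g_u, g_t\rangle$ term. Here the asymmetry between the current update $g_t$ (a single term) and the window sum is what produces the split into a $C_1 \E\|g_t\|^2$ coefficient on the current update and a $C_2 \sum_u \E\|g_u\|^2$ coefficient on the past updates.

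Collecting the bounds, the coefficient $C_1 = 1 + \sqrt{\sparsity}\,\overlap$ arises from the ``$1$'' (the original $\stepsize^2\E\|g_t\|^2$ variance term from~\eqref{eq:RecursiveIneq2}) plus the $\sqrt{\sparsity}\,\overlap$ contribution from the inner-product term's dependence on $g_t$ against the whole window. The coefficient $C_2 = \sqrt{\sparsity} + \stepsize\strongconvex C_1$ splits as the $\sqrt{\sparsity}$ piece coming from the cross/inner-product bounds on past updates and the $\stepsize\strongconvex C_1$ piece coming from the $\stepsize\strongconvex\E\|\hat x_t - x_t\|^2$ term (which carries an extra factor of $\stepsize\strongconvex$ relative to the inner-product term). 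I would be careful to track exactly where each power of $\stepsize$ lands so that the final inequality has the clean form~\eqref{eq:recursivegt} with $\stepsize^2$ multiplying every $\E\|g\|^2$ term.

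The main obstacle I anticipate is the rigorous application of the sparsity inequality~\eqref{sparseproduct} to the cross terms while keeping the bookkeeping tight enough to recover precisely the stated constants $C_1$ and $C_2$ rather than looser ones. In particular, one must be careful that the diagonal projection matrices $S_u^t$ and the sparsity structure of $g_u$ (supported on $S_{i_u}$) interact correctly, and that taking expectations respects the independence property (Property~\ref{independence}) --- the $S_u^t$ are random and correlated with the $i_u$, so the expectation steps and the use of $\E\langle\cdot,\cdot\rangle$ bounds require justifying that the sparsity inequality holds after conditioning appropriately. Getting the factor of $\sqrt{\sparsity}$ (rather than $\sparsity$ or $1$) in exactly the right places is the delicate accounting that makes the subsequent speedup analysis work, so that is where I would focus the most care.
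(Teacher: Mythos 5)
Your proposal follows essentially the same route as the paper's proof: starting from~\eqref{eq:RecursiveIneq2}, substituting the expansion~\eqref{eq:async} into both asynchrony terms, and applying the Cauchy--Schwarz/AM--GM sparsity bound~\eqref{sparseproduct} to the cross terms and to the inner product, which is exactly how the paper obtains $C_1 = 1+\sqrt{\sparsity}\overlap$ (the variance term plus the $g_t$-side of the inner-product bound) and $C_2 = \sqrt{\sparsity}+\stepsize\strongconvex C_1$ (the past-update side plus the $\stepsize\strongconvex\E\|\hat x_t - x_t\|^2$ term). One small imprecision: the coefficient multiplying the window sum in the bound on $\E\|\hat x_t - x_t\|^2$ is $(1+\sqrt{\sparsity}\overlap)$ rather than $\overlap(1+\sqrt{\sparsity}\overlap)$ --- the factor $\overlap$ is already accounted for by the number of terms in the sum --- which your final accounting of $C_2$ implicitly corrects.
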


To prove this lemma we need to bound both $\E\|\hat x_t - x^*\|^2$ and $\E\langle \hat x_t -x_t,  g_t\rangle$ with respect to $(g_u, u\leq t)$. 
We achieve this by crucially using Equation~\eqref{eq:async}, together with the following proposition, which we derive by a combination of Cauchy-Schwartz and our sparsity definition (see Section~\ref{apxB:lma1}).
\begin{equation}
\E \langle G_{u}^t g_{u}, g_t \rangle \leq \frac{\sqrt{\sparsity}}{2}(\E\|g_{u}\|^2 + \E\|g_{t}\|^2)  \, .
\end{equation}

\begin{lemma} [Suboptimality bound on $\E \|g_t\|^2$]\label{lma:suboptgt}
For all $t \geq 0$,
\begin{equation}\label{gtbound}
\E\|g_t\|^2 
\leq 4\lipschitz e_t 
	+ \frac{4\lipschitz}{n} \sum_{u=1}^{t-1} (1 - \frac{1}{n})^{(t-2\overlap-u -1)_+} e_u
	+ 4\lipschitz (1 - \frac{1}{n})^{(t-\overlap)_+} \tilde e_0  \, .
\end{equation}
where $\tilde e_0 := \frac{1}{2\lipschitz} \E\|\alpha_i^0 - f'_i(x^*)\|^2$.\footnote{We introduce this quantity instead of $e_0$ so as to be able to handle the arbitrary initialization of the $\alpha_i^0$.}
\end{lemma}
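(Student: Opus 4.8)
The plan is to bound $\E\|g_t\|^2$ by tracing each part of the update back to a suboptimality evaluated at a (random) past iterate. First I would recall from the serial \SSAGA\ analysis of Appendix~\ref{apxA} that, setting $\bar\alpha_{i_t}^t := \hat\alpha_{i_t}^t - D_{i_t}\bar\alpha^t$, the update satisfies $g_t = f'_{i_t}(\hat x_t) - \bar\alpha_{i_t}^t$. Applying $\|a+b\|^2 \le 2\|a\|^2 + 2\|b\|^2$ together with the asynchronous analogues of \citet[Eq.~(6)]{qsaga} and \citet[Eq.~(9)]{qsaga} (whose orthogonality argument carries over because the recomputed $\bar\alpha^t$ keeps the update unbiased, see Property~\ref{eventconst} and the unbiasedness discussion) yields
\[
\E\|g_t\|^2 \le 2\,\E\|f'_{i_t}(\hat x_t) - f'_{i_t}(x^*)\|^2 + 2\,\E\|\hat\alpha_{i_t}^t - f'_{i_t}(x^*)\|^2 .
\]
For the first term I would use the independence of $i_t$ from $\hat x_t$ (Property~\ref{independence}) to condition on $\hat x_t$ and invoke the smoothness bound \citet[Eq.~(8)]{qsaga}, namely $\Econd\|f'_{i}(\hat x_t) - f'_i(x^*)\|^2 \le 2\lipschitz\big(f(\hat x_t) - f(x^*)\big)$, then take the full expectation to obtain $2\lipschitz e_t$, so this term contributes exactly the leading $4\lipschitz e_t$.

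The substance of the proof is the second, ``memory'' term. Using independence of $i_t$ from the read $\hat\alpha^t$ and uniform sampling, I would first rewrite it as the average $\tfrac1n\sum_{i=1}^n \E\|\hat\alpha_i^t - f'_i(x^*)\|^2$. The structural fact to exploit is that each stored entry $\hat\alpha_i^t$ is $f'_i(\hat x_u)$, where $u$ is the iteration that last wrote the $\alpha_i$ seen by this read (or the initial $\alpha_i^0$ if $i$ has not yet been written). I would therefore express $\E\|\hat\alpha_i^t - f'_i(x^*)\|^2$ as a convex combination over past iterates $u$ of the terms $\E\|f'_i(\hat x_u) - f'_i(x^*)\|^2$, plus a leftover weight on $\|\alpha_i^0 - f'_i(x^*)\|^2$; in the serial case this combination is the exact geometric law $\tfrac1n(1-\tfrac1n)^{t-1-u}$ of the last time index $i$ was sampled. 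After averaging over $i$ and re-applying \citet[Eq.~(8)]{qsaga} to convert $\tfrac1n\sum_i \E\|f'_i(\hat x_u) - f'_i(x^*)\|^2$ into $2\lipschitz e_u$ (and the residual initial weight into $2\lipschitz\tilde e_0$), this produces the claimed geometric sum of past suboptimalities.

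The main obstacle is making the ``last write'' distribution rigorous in the asynchronous, inconsistent-read setting and recovering the precise $2\overlap$ shift together with the cutoffs $(t-2\overlap-u-1)_+$ and $(t-\overlap)_+$. Here I would lean on the bounded-overlap Assumption~\ref{boundedoverlaps}: a write from iteration $u$ is guaranteed to be visible to the read of iteration $t$ once $t \ge u+\overlap+1$, but it may be invisible during the last $\overlap$ iterations, and conversely the samples in the $\overlap$ iterations immediately following $u$ may still overlap with $u$ and so cannot be guaranteed to overwrite its entry. To stay conservative I would count both of these ambiguous windows against us, leaving a certain-overwrite window of length $(t-u-2\overlap-1)_+$ over which each intervening sampling of $i$ (probability $\tfrac1n$, independent by Property~\ref{independence}) would overwrite the entry; this caps the survival probability of an ``age-$u$'' value by $(1-\tfrac1n)^{(t-2\overlap-u-1)_+}$ and the residual mass on the never-overwritten initial value by $(1-\tfrac1n)^{(t-\overlap)_+}$, yielding the $\tilde e_0$ term. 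The delicate points I would then verify are that these conservative replacements only increase the bound, and that the per-coordinate inconsistency of the read does not break the identification of $\hat\alpha_i^t$ with a past gradient -- which I would control using atomicity (Property~\ref{eventconst}) and the fact that every sampling of $i$ rewrites its entire support $S_i$ at once.
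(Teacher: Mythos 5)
Your proposal follows essentially the same route as the paper: the same $2\|a\|^2+2\|b\|^2$ decomposition into a current-iterate term (bounded by $4\lipschitz e_t$ via smoothness and the independence of $i_t$ from $\hat x_t$) and a memory term, the same rewriting of the memory term as $\tfrac1n\sum_{i}\E\|\hat\alpha_i^t-f'_i(x^*)\|^2$, and the same ``last write'' survival analysis with conservative $\overlap$-windows on both sides producing the $(1-\tfrac1n)^{(t-2\overlap-u-1)_+}$ and $(1-\tfrac1n)^{(t-\overlap)_+}$ factors. The one step that would fail as you state it is the treatment of per-coordinate inconsistency: you propose to identify the whole read $\hat\alpha_i^t$ with a single past gradient $f'_i(\hat x_u)$ on the grounds that ``every sampling of $i$ rewrites its entire support $S_i$ at once.'' That is false in the lock-free model of Algorithm~\ref{alg:theoretical}: atomicity (Property~\ref{eventconst}) holds only coordinate-wise, the write loop updates $[\alpha_i]_v$ one coordinate at a time, and an inconsistent read can therefore mix coordinates written by different past iterations. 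The paper's fix is to decompose $\E\|\hat\alpha_i^t - f'_i(x^*)\|^2 = \sum_{l=1}^d \E\big[([\hat\alpha_i^t]_l - [f'_i(x^*)]_l)^2\big]$ and carry a separate last-write index $u_{i,l}^t$ for each coordinate $l$; the indicator bound $\ind_{\{u_{i,l}^t=u\}} \le \ind_{\{i_u=i\}}\,\ind_{\{i_v\neq i\ \forall v:\ u+\overlap+1\le v\le t-\overlap-1\}}$ and the resulting probability $\tfrac1n(1-\tfrac1n)^{(t-2\overlap-u-1)_+}$ are the same for every $l$, so summing over $l$ reconstitutes exactly the bound you wrote. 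With that repair (and the analogous per-coordinate handling of the $u=0$ case, which loses the $\tfrac1n$ factor and yields the $\tilde e_0$ term) your argument coincides with the paper's.
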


From our Sparse \SAGA\ proof we know that (see Appendix~\ref{apxA}):
\begin{align}
\E\|g_t\|^2 
&\leq 2 \E \|f'_{i_t}(\hat x_t)-f'_{i_t}(x^*)\|^2
	+ 2 \E \|\hat \alpha_{i_t}^t - f'_{i_t}(x^*)\|^2 .
\end{align}
We can handle the first term by taking the expectation over a Lipschitz inequality (\citet[Equations (7) and (8)]{qsaga}. 
All that remains to prove the lemma is to express the $\E \|\hat \alpha_{i_t}^t - f'_{i_t}(x^*)\|^2$ term in terms of past suboptimalities.
We note that it can be seen as an expectation of past first terms with an adequate probability distribution which we derive and bound.

From our algorithm, we know that each dimension of the memory vector $[\hat \alpha_i]_v$ contains a partial gradient computed at some point in the past $[f'_i(\hat x_{u_{i, v}^{t}})]_v$\footnote{More precisely: $\forall t, i, v \hspace{0.5em}\exists u_{i, v}^t<t$ s.t. $[\hat \alpha_{i}^t]_v = [f'_{i}(\hat x_{u_{i, v}^{t}})]_v$.} (unless $u=0$, in which case we replace the partial gradient with $\alpha_i^0$).
We then derive bounds on $P(u_{i,v}^t = u)$ and sum on all possible $u$. 
Together with clever conditioning, we obtain Lemma~\ref{lma:suboptgt} (see Section~\ref{apxB:lma2}).

\paragraph{Master inequality.}
Let $H_t$ be defined as ${H_t: = \sum_{u=1}^{t-1} (1 - \frac{1}{n})^{(t-2\overlap-u -1)_+} e_u}$. 
Then, by setting~\eqref{gtbound} into Lemma~\ref{lma:1}, we get (see Section~\ref{apxB:master}):
\begin{equation}\label{master}
\begin{aligned}
a_{t+1} 
\leq &(1 - \frac{\stepsize\strongconvex}{2}) a_t 
	- 2\stepsize e_t 
	+ 4\lipschitz\stepsize^2 C_1 \big(e_t  + (1 - \frac{1}{n})^{(t-\overlap)_+} \tilde e_0 \big)
	+ \frac{4\lipschitz\stepsize^2 C_1}{n} H_t 
\\
	&+4\lipschitz\stepsize^2 C_2\sum_{u=(t-\overlap)_+}^{t-1} (e_u +  (1 - \frac{1}{n})^{(u - \overlap)_+} \tilde e_0 \big)
	+\frac{4\lipschitz\stepsize^2 C_2}{n} \sum_{u=(t-\overlap)_+}^{t-1} H_u  \, .
\end{aligned}
\end{equation}

\paragraph{Lyapunov function and associated recursive inequality.}
We now have the beginning of a contraction with additional positive terms which all converge to $0$ as we near the optimum, as well as our classical negative suboptimality term.
This is not unusual in the variance reduction literature. One successful approach in the sequential case is then to define a Lyapunov function which encompasses all terms and is a true contraction (see~\citet{SAGA, qsaga}).
We emulate this solution here.
However, while all terms in the sequential case only depend on the current iterate, $t$, in the parallel case we have terms ``from the past'' in our inequality.
To resolve this issue, we define a more involved Lyapunov function which also encompasses past iterates: 
\begin{align} \label{eq:LyapunovDefinition}
\lyapunov_t = \sum_{u=0}^t (1-\contraction)^{t-u}a_u, \quad 0<\contraction < 1,
\end{align}
where $\contraction$ is a target contraction rate that we define later.

Using the master inequality~\eqref{master}, we get (see Appendix~\ref{apxB:lyapunov}):
\begin{align}\label{Lyapunov}
\lyapunov_{t+1} 
&= (1 - \contraction)^{t+1}a_0 
	+ \sum_{u=0}^t(1 - \contraction)^{t-u}a_{u+1}
\nonumber \\
&\leq (1 - \contraction)^{t+1}a_0 + (1-\frac{\stepsize\strongconvex}{2})\lyapunov_t + \sum_{u=1}^t r_u^t e_u + r_0^t \tilde e_0  \, .
\end{align}

The aim is to prove that $\lyapunov_t$ is bounded by a contraction. 
We have two promising terms at the beginning of the inequality, and then we need to handle the last term.
Basically, we can rearrange the sums in~\eqref{master} to expose a simple sum of $e_u$ multiplied by factors $r_u^t$. 

Under specific conditions on $\contraction$ and $\stepsize$, we can prove that $r_u^t$ is negative for all $u \geq 1$, which coupled with the fact that each $e_u$ is positive means that we can safely drop the sum term from the inequality. 
The $r_0^t$ term is a bit trickier and is handled separately.

In order to have a bound on $e_t$ directly rather than on $\E \|\hat x_t - x^*\|^2$, we then introduce an additional $\stepsize e_t$ term on both sides of~\eqref{Lyapunov}.
The bound on $\stepsize$ under which the modified $r_t^t + \stepsize$ is negative is then twice as small (we could have used any multiplier between $0$ and $2\stepsize$, but chose $\stepsize$ for simplicity's sake).
This condition is given in the following Lemma.

\begin{lemma} [Sufficient condition for convergence]\label{lma:3}
Suppose $\overlap < n/10$ and $\contraction \leq 1/4n$. If 
\begin{align}\label{eq:lmacondition}
\stepsize \leq \stepsize^* = \frac{1}{32\lipschitz (1 + \sqrt{\sparsity} \overlap) \sqrt{1 + \frac{1}{8\kappa} \min(\overlap, \frac{1}{\sqrt{\sparsity}})}}
\end{align}
then for all $u \geq 1$, the $r_u^t$ from~\eqref{Lyapunov} verify:
\begin{align}
r_u^t \leq 0 \,; \quad r_t^t + \stepsize \leq 0 \,,
\end{align}
and thus we have:
\begin{align}
\stepsize  e_t + \lyapunov_{t+1} \leq (1 - \contraction)^{t+1}a_0 + (1-\frac{\stepsize\strongconvex}{2})\lyapunov_t + r_0^t \tilde e_0 \,.
\end{align}

\end{lemma}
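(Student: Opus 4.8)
The plan is to prove the lemma by first obtaining an explicit expression for the coefficients $r_u^t$ and then showing that each positive ($\gamma^2$-order) contribution is dominated by the negative suboptimality contribution under the stated step-size bound. I would substitute the master inequality~\eqref{master} (with its running index renamed to $s$) into the Lyapunov recursion, i.e.\ expand each $a_{s+1}$ in $\mathcal{L}_{t+1} = (1-\rho)^{t+1}a_0 + \sum_{s=0}^t (1-\rho)^{t-s}a_{s+1}$. The $(1-\frac{\gamma\mu}{2})a_s$ term immediately reassembles into $(1-\frac{\gamma\mu}{2})\mathcal{L}_t$, and the $\tilde e_0$ terms are collected into $r_0^t$ to be carried through unchanged. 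Collecting the coefficient of each $e_u$ ($u \geq 1$) then defines $r_u^t$; there are four distinct sources: the direct terms $-2\gamma + 4L\gamma^2 C_1$ (weighted by $(1-\rho)^{t-u}$), the $C_2$-sum in which $e_u$ appears for $s$ ranging over $u+1 \leq s \leq u+\tau$, the single $H_t$-term, and the double $\sum H$ term.

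The central estimate is to bound the three positive sources by controlling nested geometric sums of the mixed form $\sum (1-\rho)^{\,\cdot\,}(1-\frac{1}{n})^{\,\cdot\,}$. Here I would use $\rho \leq 1/(4n)$, which guarantees $(1-\rho)^{-k}$ stays bounded by a constant (e.g.\ $e^{1/4}$) over index ranges of length $O(n)$, so that the factor $(1-\rho)^{t-u}$ can be pulled out of each sum up to a constant; and I would use $\tau < n/10$ to control the shifts of the form $(\,\cdot - 2\tau - \cdot\,)_+$. The $(1-\frac{1}{n})$ geometric series sums to $O(n)$, which exactly cancels the $1/n$ prefactors on the $H$-terms, leaving each positive contribution of order $\gamma^2 L\,(1-\rho)^{t-u}$ times a factor built from $C_1$, $\tau C_2$, and $\min(\tau, 1/\sqrt{\Delta})$. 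Dividing through by $(1-\rho)^{t-u}$, the inequality $r_u^t \leq 0$ reduces to $-2\gamma + O(\gamma^2 L(\ldots)) \leq 0$, i.e.\ $\gamma \lesssim 1/(L(\ldots))$, which I would check matches $\gamma^*$ after substituting $C_1 = 1+\sqrt{\Delta}\tau$ and $C_2 = \sqrt{\Delta} + \gamma\mu C_1$ (the $\gamma\mu$ piece of $C_2$ is what produces the $\frac{1}{8\kappa}\min(\tau,1/\sqrt{\Delta})$ term inside $\xi$, since $\mu/L = 1/\kappa$).

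The coefficient $r_t^t$ is handled separately and is the reason for the extra $\gamma e_t$ on both sides: $e_t$ receives no contribution from the $H$- or $C_2$-sum sources, which only involve strictly earlier indices, so $r_t^t$ consists essentially of the direct term $-2\gamma + 4L\gamma^2 C_1$. Requiring $r_t^t + \gamma \leq 0$ rather than $r_t^t \leq 0$ replaces the leading $-2\gamma$ by $-\gamma$, tightening the admissible step size by a factor of two, precisely as anticipated in the text preceding the lemma. Once $r_u^t \leq 0$ for $u \geq 1$ and $r_t^t + \gamma \leq 0$ are established, each $e_u$ ($u \geq 1$) appears in~\eqref{Lyapunov} (after moving $\gamma e_t$ across) with a nonpositive coefficient and may be dropped, leaving only the $(1-\rho)^{t+1}a_0$, $(1-\frac{\gamma\mu}{2})\mathcal{L}_t$, and $r_0^t\tilde e_0$ terms, which is the claimed inequality.

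I expect the main obstacle to be the bookkeeping of the double sum $\sum H$: tracking exactly which $e_u$ it feeds into and showing that, after exchanging the order of summation, the resulting mixed geometric sum still collapses to $O(n)$ per term, so that the $1/n$ prefactor keeps it at the same $\gamma^2 L$ order as the other contributions. Getting the constants to line up so that the final threshold is exactly $\gamma^*$ rather than merely $\Theta(\gamma^*)$, and in particular reproducing the $\min(\tau, 1/\sqrt{\Delta})$ inside $\xi$ from the interplay between the length-$\tau$ window and the sparsity factor $\sqrt{\Delta}$, is where the calculation is most delicate.
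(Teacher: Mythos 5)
Your plan follows the paper's proof essentially step for step: substitute the master inequality into the Lyapunov recursion, collect the coefficient $r_u^t$ of each $e_u$ by exchanging the order of summation in the nested mixed geometric sums, use $\rho\le 1/(4n)$ and $\tau<n/10$ to keep the $(1-\rho)^{-k}$ factors bounded by constants while the $(1-\nicefrac{1}{n})$ series sums to $O(n)$ and cancels the $\nicefrac{1}{n}$ prefactors on the $H$-terms, reduce $r_u^t\le 0$ to a quadratic condition in $\gamma$ whose $\gamma^2$ coefficient (coming from the $\gamma\mu C_1$ part of $C_2$) produces the $\xi$ factor via the positive root, and carry $r_0^t\tilde e_0$ through unaffected. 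The one cosmetic difference is your treatment of $r_t^t$: you correctly observe it consists of only the direct term $-2\gamma+4L\gamma^2C_1$, whereas the paper simply reuses the generic upper bound on $r_u^t$ with $-2\gamma$ replaced by $-\gamma$ (which is where the factor of two in $\gamma^*$ actually originates, rather than from the tight form of $r_t^t$); either route validates the stated $\gamma^*$, since the binding constraint remains $r_u^t\le 0$ for $u<t$ and the conditions are monotone in $\gamma$.
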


We obtain this result after carefully deriving the $r_u^t$ terms. 
We find a second-order polynomial inequality in $\stepsize$, which we simplify down to~\eqref{eq:lmacondition} (see Appendix~\ref{apxB:lma3}).

We can then finish the argument to bound the suboptimality error $e_t$. We have:
\begin{align}
\lyapunov_{t+1} \leq
\stepsize e_t + \lyapunov_{t+1} 
&\leq (1-\frac{\stepsize\strongconvex}{2})\lyapunov_t  +(1 - \contraction)^{t+1} (a_0 + A \tilde e_0)  \, .
\end{align}

We have two linearly contracting terms. 
The sum contracts linearly with the worst rate between the two (the smallest geometric rate factor).
If we define $\contraction^* := \nu \min(\contraction, \stepsize \strongconvex / 2)$, with $0 <\nu < 1$,\footnote{$\nu$ is introduced to circumvent the problematic case where $\contraction$ and  $\stepsize \strongconvex / 2$ are too close together.} then we get:
\begin{align}
\stepsize e_t + \lyapunov_{t+1} 
&\leq (1-\frac{\stepsize\strongconvex}{2})^{t+1}\lyapunov_0 + (1 - \contraction^*)^{t+1} \frac{a_0 + A \tilde e_0}{1 -\eta} 
\\
\stepsize e_t &\leq (1 - \contraction^*)^{t+1} \big(\lyapunov_0 + \frac{1}{1 -\eta} (a_0 + A  \tilde e_0) \big) \, ,
\end{align}
where $\eta := \frac{1-M}{1-\contraction^*}$ with $M :=\max(\contraction, \stepsize\strongconvex/2)$. Our geometric rate factor is thus $\contraction^*$ (see Appendix~\ref{apxB:th2}).

\subsection{Extension to \SVRG}\label{apx:SVRGext}
Our proof can easily be adapted to accommodate the \SVRG\ variant introduced in~\citet{qsaga}, which is closer to \SAGA\ than the initial \SVRG\ algorithm and which is adaptive to local strong convexity (it does not require the inner loop epoch size~$m = \Omega(\kappa)$ as a hyperparameter). 
In this variant, instead of computing a full gradient every $m$ iterations, a random binary variable $U$ with probability $P(U=1)=1/n$ is sampled at the beginning of every iteration to determine whether a full gradient is computed or a normal \SVRG\ step is made. 
If $U=1$, then a full gradient is computed.
Otherwise the algorithm takes a normal inner \SVRG\ step.\footnote{Note that the parallel implementation is not very straightforward, as it requires a way to communicate to cores when they should start computing a batch gradient instead of inner steps.}

To prove convergence, all one has to do is to modify Lemma~\ref{lma:suboptgt} very slightly (the only difference is that the $(t -2\overlap -u -1)_+$ exponent is replaced by $(t - u)$ and the rest of the proof can be used as is).
The justification for this small tweak is that the batch steps in \SVRG\ are fully synchronized. More details can be found in Section~\ref{apxB:lma2} (see footnote~\ref{footnote}).

By using our ``after read'' labeling, we were also able to derive a convergence and speedup proof for the original \SVRG\ algorithm, but the proof technique diverges after Lemma~\ref{lma:1}. 
This is beyond the scope of this paper, so we omit it here. 
Using the ``after read'' labeling and a different proof technique from~\citet{mania}), we obtain an epoch size in $\mathcal{O}(\kappa)$ instead of $\mathcal{O}(\kappa^2)$ and a dependency in our overlap bound in $\mathcal{O}(\sparsity^{-1/2})$ instead of $\mathcal{O}(\sparsity^{-1/3})$.

\paragraph{\ASAGA\ vs. asynchronous \SVRG.}
There are several scenarios in which \ASAGA\ can be practically advantageous over its closely related cousin, asynchronous \SVRG\ (note though that ``asynchronous'' \SVRG\ still requires a synchronization step to compute the full gradients).

First, while \SAGA\ trades memory for less computation, in the case of generalized linear models the memory cost can be reduced to $\mathcal{O}(n)$, which is the same as for $\SVRG$.
This is of course also true for their asynchronous counterparts.

Second, as \ASAGA\ does not require any synchronization steps, it is better suited to heterogeneous computing environments (where cores have different clock speeds or are shared with other applications).

Finally, \ASAGA\ does not require knowing the condition number $\kappa$ for optimal convergence in the sparse regime.
It is thus adaptive to local strong convexity, whereas \SVRG\ is not.
Indeed, \SVRG\ and its asynchronous variant require setting an additional hyper-parameter -- the epoch size $m$ -- which needs to be at least $\Omega(\kappa)$ for convergence but yields a slower effective convergence rate than \ASAGA\ if it is set much bigger than $\kappa$.
\SVRG\ thus requires tuning this additional hyper-parameter or running the risk of either slower convergence (if the epoch size chosen is much bigger than the condition number) or even not converging at all (if $m$ is chosen to be much smaller than $\kappa$).\footnote{Note that as \SAGA\ (and contrary to the original \SVRG),  the \SVRG\ variant from~\citet{qsaga} does not require knowledge of $\kappa$ and is thus adaptive to local strong convexity, which carries over to its asynchronous adaptation.}

\subsection{Initial recursive inequality derivation} \label{app:RecursiveDerivation}
\vspace{-1mm}
We start by proving Equation~\eqref{eq:RecursiveIneq2}.
Let $g_t := g(\hat x_t, \hat \alpha^t, i_t)$. From~\eqref{eq:PIupdate}, we get:
\begin{align*}
\|x_{t+1} - x^*\|^2 \nonumber
= \|x_t -\stepsize g_t -x^*\|^2 
&= \|x_t -x^*\|^2 + \stepsize^2 \|g_t\|^2 -2\stepsize\langle x_t -x^*,  g_t\rangle
\\
&= \|x_t -x^*\|^2 + \stepsize^2 \|g_t\|^2 
	- 2 \stepsize\langle \hat x_t -x^*,  g_t\rangle +2\stepsize\langle \hat x_t -x_t,  g_t\rangle .
\end{align*}

In order to prove Equation~\eqref{eq:RecursiveIneq2}, we need to bound the $- 2 \stepsize\langle \hat x_t -x^*,  g_t\rangle$ term.
Thanks to Property~\ref{independence}, we can write:
\vspace{-2mm}
\begin{align*}
\E \langle \hat x_t -x^*,  g_t\rangle 
= \E \langle \hat x_t -x^*, \Econd g_t \rangle
= \E \langle \hat x_t -x^*,  f'(\hat x_t)\rangle  \, .
\end{align*}

We can now use a classical strong convexity bound as well as a squared triangle inequality to get:
\begin{align}
- \langle \hat x_t -x^*,  f'(\hat x_t)\rangle &\leq - \big(f(\hat x_t) -f(x^*)\big) -\frac{\strongconvex}{2}\|\hat x_t - x^*\|^2
\tag*{(Strong convexity bound)} \nonumber \\
- \|\hat x_t - x^*\|^2 &\leq \|\hat x_t - x_t\|^2 - \frac{1}{2} \|x_t - x^*\|^2
\tag*{($\|a+b\|^2 \leq 2 \|a\|^2 + 2 \|b\|^2$)} \nonumber \\
- 2 \stepsize \E \langle \hat x_t -x^*,  g_t\rangle &\leq
	- \frac{\stepsize \strongconvex}{2} \E \|x_t - x^*\|^2
	+ \stepsize \strongconvex \E \|\hat x_t - x_t\|^2
	-2 \stepsize \big(\E f(\hat x_t) - f(x^*)\big)  \, .
\end{align}

Putting it all together, we get the initial recursive inequality~\eqref{eq:RecursiveIneq2}, rewritten here explicitly:
\begin{align}
a_{t+1} \leq 
	(1 -\frac{\stepsize \strongconvex}{2}) a_t 
	+ \stepsize^2 \E \|g_t\|^2 
	+ \stepsize\strongconvex \E\|\hat x_t - x_t\|^2 
	+ 2\stepsize \E \langle \hat x_t -x_t,  g_t\rangle
	-2\stepsize e_t  \, ,
\end{align}
where $a_t := \E \|x_t - x^*\|^2$ and $e_t := \E f(\hat x_t) - f(x^*)$.

\subsection{Proof of Lemma~\ref{lma:1}} \label{apxB:lma1}
To prove Lemma~\ref{lma:1}, we now bound both $\E\|\hat x_t - x_t\|^2$ and $\E\langle \hat x_t -x_t,  g_t\rangle$ with respect to $\E\|g_u\|^2, u\leq t$.

We start by proving a relevant property of $\sparsity$, which enables us to derive an essential inequality for both these terms, given in Proposition~\ref{prop:1} below.
We reuse the sparsity constant introduced in~\citet{smola} and relate it to the one we have defined earlier, $\sparsityr$:
\begin{remark} \label{rmk:1}
Let $D$ be the smallest constant such that:
\begin{align} \label{sparsitycondition}
\Econd \|x\|_i^2 = \frac{1}{n} \sum_{i=1}^n \|x\|_i^2 \leq D \|x\|^2 \quad  \forall x \in \mathbb{R}^d,
\end{align}
where $\|.\|_i$ is defined to be the $\ell_2$-norm restricted to the support $S_i$ of $f_i$. 
We have:
\begin{equation}
D = \frac{\sparsityr}{n} = \sparsity  \, .
\end{equation}
\end{remark}

\begin{proof}
We have:
\begin{align}
\Econd \|x\|_i^2 = \frac{1}{n} \sum_{i=1}^n \|x\|_i^2
= \frac{1}{n} \sum_{i=1}^n \sum_{v \in S_i} [x]_v^2
= \frac{1}{n} \sum_{v=1}^d \sum_{i \mid v \in S_i} [x]_v^2
= \frac{1}{n} \sum_{v=1}^d \delta_v [x]_v^2 \, ,
\end{align}
where $\delta_v := \mathbf{card}(i \mid v \in S_i)$.

This implies:
\begin{align}
D \geq \frac{1}{n} \sum_{v=1}^d \delta_v \frac{[x]_v^2}{\|x\|^2}  \, .
\end{align}

Since $D$ is the minimum constant satisfying this inequality, we have:
\begin{align}
D = \max_{x \in \mathbb{R}^d} \frac{1}{n} \sum_{v=1}^d \delta_v \frac{[x]_v^2}{\|x\|^2}  \, .
\end{align}
We need to find $x$ such that it maximizes the right-hand side term.
Note that the vector $([x]_v^2 / \|x\|^2)_{v=1..d}$ is in the unit probability simplex, which means that an equivalent problem is the maximization over all convex combinations of $(\delta_v)_{v=1..d}$.
This maximum is found by putting all the weight on the maximum $\delta_v$, which is $\sparsityr$ by definition.

This means that $\sparsity = \sparsityr / n$ is indeed the smallest constant satisfying~\eqref{sparsitycondition}.
\end{proof}

\begin{proposition}\label{prop:1}
For any $u \neq t$,
\begin{align}\label{sparseproduct}
\E |\langle g_{u}, g_t \rangle | &\leq \frac{\sqrt{\sparsity}}{2}(\E\|g_{u}\|^2 + \E\|g_{t}\|^2)  \, .
\end{align}
\end{proposition}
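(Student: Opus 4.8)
The plan is to exploit the fact that for each index the update vector $g_u = g(\hat x_u, \hat\alpha^u, i_u)$ is supported on $S_{i_u}$: each of its three constituents $f'_{i_u}(\hat x_u)$, $\hat\alpha^u_{i_u}$ and $D_{i_u}(\nicefrac{1}{n}\sum_k \hat\alpha^u_k) = P_{S_{i_u}}D(\cdots)$ vanishes outside $S_{i_u}$. Since the right-hand side of~\eqref{sparseproduct} is symmetric in $u$ and $t$ and $\langle g_u, g_t\rangle = \langle g_t, g_u\rangle$, I may assume without loss of generality that $u < t$, so that the index $u$ plays the role that will be ``frozen'' and $t$ the one whose support we project onto.

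First, using that $g_t$ lives on $S_{i_t}$, I would write $\langle g_u, g_t\rangle = \langle P_{S_{i_t}} g_u, g_t\rangle$ and apply Cauchy--Schwarz restricted to the support $S_{i_t}$ to get $|\langle g_u, g_t\rangle| \le \|g_u\|_{i_t}\,\|g_t\|$, where $\|\cdot\|_{i_t}$ is the $\ell_2$-norm restricted to $S_{i_t}$ as in Remark~\ref{rmk:1} (note $\|g_t\|_{i_t} = \|g_t\|$ since $g_t$ is already supported on $S_{i_t}$). Young's inequality $ab \le \frac12(\lambda a^2 + \lambda^{-1}b^2)$ with the balancing choice $\lambda = \sparsity^{-1/2}$ then yields $\|g_u\|_{i_t}\|g_t\| \le \frac12\big(\sparsity^{-1/2}\|g_u\|_{i_t}^2 + \sparsity^{1/2}\|g_t\|^2\big)$.

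Next I would take the full expectation and handle the cross-support term $\E\|g_u\|_{i_t}^2$ by conditioning on the past up to iteration $u$. Because $u < t$, the ``after read'' ordering guarantees that $i_t$ is independent of $g_u$ (a deterministic function of $\hat x_u, \hat\alpha^u, i_u$, all fixed by that iteration's history). Conditioning out only the randomness of $i_t$ and applying the sparsity bound of Remark~\ref{rmk:1} to the then-fixed vector $g_u$ gives $\Econd_{i_t}\|g_u\|_{i_t}^2 \le \sparsity\|g_u\|^2$, hence $\sparsity^{-1/2}\E\|g_u\|_{i_t}^2 \le \sparsity^{1/2}\E\|g_u\|^2$. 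Substituting back produces exactly $\E|\langle g_u, g_t\rangle| \le \frac{\sqrt\sparsity}{2}(\E\|g_u\|^2 + \E\|g_t\|^2)$.

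The routine ingredients (support of $g_u$, Cauchy--Schwarz, Young, Remark~\ref{rmk:1}) are immediate; the one step needing care -- and the real content of the perturbed-iterate machinery -- is the independence of $i_t$ from the entire history defining $g_u$ for $u < t$, which is precisely what the ``after read'' global ordering was introduced to provide. I would invoke this explicitly rather than the bare Property~\ref{independence} statement (which only asserts independence of $i_r$ from $\hat x_t$), since here I also need independence from $i_u$ and $\hat\alpha^u$.
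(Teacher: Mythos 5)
Your proposal is correct and follows essentially the same route as the paper's proof: restrict the inner product to the support $S_{i_t}$, invoke the independence of $i_t$ from $g_u$ for $u<t$ together with Remark~\ref{rmk:1} to convert $\E\|g_u\|_{i_t}^2$ into $\sparsity\,\E\|g_u\|^2$, and balance the two terms. The only (immaterial) difference is that you apply a weighted Young inequality pointwise before taking expectations, whereas the paper uses Cauchy--Schwarz for expectations followed by AM--GM; your explicit remark that the needed independence is of $i_t$ from the whole history $(\hat x_u,\hat\alpha^u,i_u)$ defining $g_u$, not just from $\hat x_t$, matches the condition the paper itself invokes in this step.
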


\begin{proof}
Let $u \neq t$. Without loss of generality, $u < t$.\footnote{One only has to switch $u$ and $t$ if $u>t$.}
Then:

\begin{align}
\E |\langle g_{u}, g_t \rangle |
&\leq \E\|g_{u}\|_{i_t}\|g_t\| 
\tag*{(Sparse inner product; support of $g_t$ is $S_{i_t}$)} \nonumber \\ 
&\leq \sqrt{\E\|g_{u}\|_{i_t}^2}\sqrt{\E\|g_{t}\|^2}
\tag*{(Cauchy-Schwarz for expectations)} \nonumber \\ 
&\leq \sqrt{\sparsity \E\|g_{u}\|^2}\sqrt{\E\|g_{t}\|^2}
\tag*{(Remark~\ref{rmk:1} and $i_t \perp\!\!\!\perp g_u, \forall u < t$)} \nonumber \\ 
&\leq \frac{\sqrt{\sparsity}}{2}(\E\|g_{u}\|^2 + \E\|g_{t}\|^2)  \, .
\tag*{(AM-GM inequality)}
\end{align}
All told, we have:
\begin{align}
\E |\langle g_{u}, g_t \rangle | &\leq \frac{\sqrt{\sparsity}}{2}(\E\|g_{u}\|^2 + \E\|g_{t}\|^2)  \, .
\end{align}
\end{proof}

\paragraph{Bounding $\E\langle \hat x_t -x_t,  g_t\rangle$ in terms of $g_u$.}
\begin{align}
\frac{1}{\stepsize} \E\langle \hat x_t - x_t, g_t \rangle 
&= \sum_{u=(t - \overlap)_+}^{t-1} \E \langle G_u^t g_u, g_t \rangle
\tag*{(by Equation~\eqref{eq:async})} \nonumber \\
&\leq \sum_{u=(t - \overlap)_+}^{t-1} \E | \langle g_u, g_t \rangle |
\tag*{($G_u^t$ diagonal matrices with terms in $\{0, 1\}$)}\nonumber \\
&\leq \sum_{u=(t - \overlap)_+}^{t-1} \frac{\sqrt{\sparsity}}{2}(\E\|g_{u}\|^2 + \E\|g_{t}\|^2) 
\tag*{(by Proposition~\ref{prop:1})}\nonumber \\
&\leq \frac{\sqrt{\sparsity}}{2} \sum_{u=(t - \overlap)_+}^{t-1}\E\|g_{u}\|^2 + \frac{\sqrt{\sparsity}\overlap}{2}\E\|g_{t}\|^2 .
\end{align}

\paragraph{Bounding $\E\|\hat x_t - x_t\|^2$ with respect to $g_u$}
Thanks to the expansion for $\hat x_t - x_t$~\eqref{eq:async}, we get:
\begin{align*}
\|\hat x_t - x_t\|^2 
\leq \stepsize^2 \sum_{u, v=(t -\overlap)_+}^{t-1}|\langle G_u^t g_{u}, G_v^t g_{v}\rangle | 
\leq \stepsize^2 \sum_{u=(t -\overlap)_+}^{t-1}\|g_{u}\|^2 
	+ \stepsize^2 \sum_{\substack{u, v=(t-\overlap)_+ \\u\neq v}}^{t-1} |\langle G_u^t g_{u}, G_v^t g_{v}\rangle |  \, .
\end{align*}
Using~\eqref{sparseproduct} from Proposition~\ref{prop:1}, we have that for $u \neq v$:

\begin{equation} \label{supersparse}
\E |\langle G_u^t g_{u}, G_v^t g_{v}\rangle | 
\leq \E |\langle g_{u}, g_{v}\rangle | 
\leq \frac{\sqrt{\sparsity}}{2}(\E\|g_{u}\|^2 + \E\|g_{v}\|^2) \, .
\end{equation}

By taking the expectation and using~\eqref{supersparse}, we get:
\begin{align}
\E\|\hat x_t - x_t\|^2
&\leq \stepsize^2 \sum_{u=(t-\overlap)_+}^{t-1}\E\|g_{u}\|^2 
	+ \stepsize^2 \sqrt{\sparsity}(\overlap-1)_+ \sum_{u=(t-\overlap)_+}^{t-1}\E\|g_{u}\|^2 
\nonumber \\
&= \stepsize^2 \big(1+\sqrt{\sparsity}(\overlap-1)_+ \big)\sum_{u=(t-\overlap)_+}^{t-1}\E\|g_{u}\|^2 
\nonumber \\
&\leq \stepsize^2 \big(1+\sqrt{\sparsity}\overlap \big)\sum_{u=(t-\overlap)_+}^{t-1}\E\|g_{u}\|^2 .
\end{align}

We can now rewrite~\eqref{eq:RecursiveIneq2} in terms of $\E\|g_t\| ^2$, which finishes the proof for Lemma~\ref{lma:1} (by introducing $C_1$ and $C_2$ as specified in Lemma~\ref{lma:1}):
\begin{align}
a_{t+1} &\leq 
	(1 - \frac{\stepsize\strongconvex}{2}) a_t 
	- 2\stepsize e_t
	+ \stepsize^2 \E\|g_t\|^2
	+ \stepsize^3 \strongconvex(1+\sqrt{\sparsity}\overlap)\sum_{u=(t-\overlap)_+}^{t-1}\E\|g_{u}\|^2
\nonumber \\
	&\quad + \stepsize^2 \sqrt{\sparsity}\sum_{u=(t-\overlap)_+}^{t-1}\E\|g_{u}\|^2
	+ \stepsize^2 \sqrt{\sparsity}\overlap\E\|g_t\|^2
\nonumber \\
&\leq (1 - \frac{\stepsize\strongconvex}{2}) a_t 
	- 2\stepsize e_t
	+ \stepsize^2 C_1 \E\|g_t\|^2
	+ \stepsize^2 C_2 \sum_{u=(t-\overlap)_+}^{t-1}\E\|g_{u}\|^2 .
\end{align}
\qed

\subsection{Proof of Lemma~\ref{lma:suboptgt}} \label{apxB:lma2}
We now derive our bound on $g_t$ with respect to suboptimality.
From Appendix~\ref{apxA}, we know that:
\begin{align}
\E\|g_t\|^2
&\leq 2 \E \|f'_{i_t}(\hat x_t)-f'_{i_t}(x^*)\|^2 
	+ 2 \E \|\hat \alpha_{i_t}^t - f'_{i_t}(x^*)\|^2 \label{eq:classicsaga}
\\
\E \|f'_{i_t}(\hat x_t)-f'_{i_t}(x^*)\|^2 
&\leq 2\lipschitz\big(\E f(\hat x_t) - f(x^*)\big)
= 2\lipschitz e_t  \, . \label{eq:classicsaga2}
\end{align}

\textbf{N. B.: In the following, $i_t$ is a random variable picked uniformly at random in $\{1,...,n\}$, whereas $i$ is a fixed constant.} 

We still have to handle the $\E \|\hat \alpha_{i_t}^t - f'_{i_t}(x^*)\|^2$ term and express it in terms of past suboptimalities. 
We know from our definition of $t$ that $i_t$ and $\hat x_u$ are independent $\forall u<t$.
Given the ``after read'' global ordering, $\Econd$ -- the expectation on $i_t$ conditioned on $\hat x_t$ and all ``past" $\hat x_u$ and $i_u$ -- is well defined, and we can rewrite our quantity as:
\begin{align*}
\E \|\hat \alpha_{i_t}^t - f'_{i_t}(x^*)\|^2 
= \E \big( \Econd \|\hat \alpha_{i_t}^t - f'_{i_t}(x^*)\|^2 \big)
&= \E \frac{1}{n} \sum_{i=1}^n  \|\hat \alpha_i^t - f'_i(x^*)\|^2
\\
&=  \frac{1}{n} \sum_{i=1}^n \E \|\hat \alpha_i^t - f'_i(x^*)\|^2 .
\end{align*}

Now, with $i$ fixed, let $u_{i,l}^t$ be the time of the iterate last used to write the $[\hat \alpha_{i}^t]_l$ quantity, i.e. $[\hat \alpha_{i}^t]_l = [f'_{i}(\hat x_{u_{i, l}^t})]_l$.
We know\footnote{In the case where $u=0$, one would have to replace the partial gradient with $\alpha_i^0$. We omit this special case here for clarity of exposition.} that $0 \leq u_{i,l}^t \leq t - 1$.
To use this information, we first need to split $\hat \alpha_i$ along its dimensions to handle the possible inconsistencies among them:
\begin{align*}
\E \|\hat \alpha_i^t - f'_i(x^*)\|^2
=
\E \sum_{l=1}^d \big([\hat \alpha_i^t]_l-[f'_i(x^*)]_l\big)^2
= 
\sum_{l=1}^d \E \Big[ \big([\hat \alpha_i^t]_l-[f'_i(x^*)]_l\big)^2 \Big] .
\end{align*}

This gives us:
\begin{align}
\E \|\hat \alpha_i^t - f'_i(x^*)\|^2
&= 
\sum_{l=1}^d \E \Big[ \big(f'_{i}(\hat x_{u_{i, l}^t})_l-f'_i(x^*)_l\big)^2 \Big] \notag \\
&=
\sum_{l=1}^d\E \Big[\sum_{u=0}^{t-1} \ind_{\{u_{i, l}^t = u\}} \big(f'_i(\hat x_u)_l-f'_i(x^*)_l\big)^2 \Big] \notag \\
&=
\sum_{u=0}^{t-1} \sum_{l=1}^d\E \Big[\ind_{\{u_{i, l}^t = u\}} \big(f'_i(\hat x_u)_l-f'_i(x^*)_l\big)^2 \Big] 
\label{eq:IndicatorsAppearance}.
\end{align}

We will now rewrite the indicator so as to obtain independent events from the rest of the equality. 
This will enable us to distribute the expectation. 
Suppose $u>0$ ($u=0$ is a special case which we will handle afterwards). $\{u_{i, l}^t = u\}$ requires two things:
\begin{enumerate}
\item at time $u$, $i$ was picked uniformly at random,
\item (roughly) $i$ was not picked again between $u$ and $t$.
\end{enumerate}
We need to refine both conditions because we have to account for possible collisions due to asynchrony. 
We know from our definition of $\overlap$ that the $t^\mathrm{th}$ iteration finishes before at $t + \overlap + 1$, but it may still be unfinished by time $t + \overlap$.
This means that we can only be sure that an update selecting $i$ at time $v$ has been written to memory at time $t$ if $v \leq t -\overlap -1$.
Later updates may not have been written yet at time $t$.
Similarly, updates before $v = u + \overlap +1$ may be overwritten by the $u^\mathrm{th}$ update so we cannot infer that they did not select $i$. From this discussion, we conclude that $u_{i, l}^t = u$ implies that $i_v \neq i$ for all $v$ between $u+\overlap+1$ and $t-\overlap-1$, though it can still happen that $i_v = i$ for $v$ outside this range.

Using the fact that $i_u$ and $i_v$ are independent for $v \neq u$, we can thus upper bound the indicator function appearing in~\eqref{eq:IndicatorsAppearance} as follows:\footnote{\label{footnote}In the simpler case of the variant of \SVRG\ from~\citet{qsaga} as described in~\ref{apx:SVRGext}, the batch gradient computations are fully synchronized. This means that we can write much the same inequality without having to worry about possible overwrites, thus replacing  $\ind_{\{i_v \neq i\ \forall v\ \text{s.t.}\ u+\overlap+1 \leq v \leq t-\overlap-1\}}$ by $\ind_{\{i_v \neq i\ \forall v\ \text{s.t.}\ u+1 \leq v \leq t\}}$.}
\begin{align}\label{eq:indicatrices}
\ind_{\{u_{i,l}^t = u\}} 
\leq \ind_{\{i_u=i\}} 
	 \ind_{\{i_v \neq i\ \forall v\ \text{s.t.}\ u+\overlap+1 \leq v \leq t-\overlap-1\}} .
\end{align}
This gives us:
\begin{align}
\E \Big[ &\ind_{\{u_{i,l}^t = u\}} \big(f'_i(\hat x_u)_l-f'_i(x^*)_l\big)^2 \Big]
\nonumber \\ 
&\leq \E \Big[\ind_{\{i_u=i\}} 
	\ind_{\{i_v \neq i\ \forall v\ \text{s.t.}\ u+\overlap+1 \leq v \leq t-\overlap-1\}} \big(f'_i(\hat x_u)_l-f'_i(x^*)_l\big)^2\Big]
\nonumber \\ 
&\leq P\{i_u=i\}
	P\{i_v \neq i\ \forall v\ \text{s.t.}\ u+\overlap+1 \leq v \leq t-\overlap-1\}
	\E\big(f'_i(\hat x_u)_l-f'_i(x^*)_l\big)^2
\tag*{($i_v \perp\!\!\! \perp \hat x_u, \forall v \geq u$)} \nonumber \\ 
&\leq \frac{1}{n}(1 - \frac{1}{n})^{(t-2\overlap-u -1)_+}
	\E\big(f'_i(\hat x_u)_l-f'_i(x^*)_l\big)^2 .
\end{align}
Note that the third line used the crucial independence assumption $i_v \perp\!\!\! \perp \hat x_u, \forall v \geq u$ arising from our ``After Read'' ordering.
Summing over all dimensions $l$, we then get:
\begin{align}
\E \Big[ \ind_{\{u_{i,l}^t = u\}} \|f'_i(\hat x_u)-f'_i(x^*)\|^2 \Big]
\leq \frac{1}{n}(1 - \frac{1}{n})^{(t-2\overlap-u-1)_+}
	\E \|f'_i(\hat x_u)-f'_i(x^*)\|^2 .
\end{align}

So now:
\begin{align}
\E \|\hat \alpha_{i_t}^t - f'_{i_t}(x^*)\|^2 - \lambda \tilde e_0
&\leq \frac{1}{n}\sum_{i=1}^n \sum_{u=1}^{t-1} \frac{1}{n}(1 - \frac{1}{n})^{(t-2\overlap-u-1)_+} \E \|f'_i(\hat x_{u}) - f'_i(x^*)\|^2 
\nonumber \\ 
&= \sum_{u=1}^{t-1} \frac{1}{n}(1 - \frac{1}{n})^{(t-2\overlap-u -1)_+} \frac{1}{n}\sum_{i=1}^n \E \|f'_i(\hat x_{u}) - f'_i(x^*)\|^2
\nonumber \\ 
&= \sum_{u=1}^{t-1} \frac{1}{n}(1 - \frac{1}{n})^{(t-2\overlap-u -1)_+} \E \Big(\Econd \|f'_{i_u}(\hat x_{u}) - f'_{i_u}(x^*)\|^2 \Big)
\tag*{($i_u \perp\!\!\!\perp \hat x_u$)} \nonumber \\ 
&\leq \frac{2\lipschitz}{n} \sum_{u=1}^{t-1} (1 - \frac{1}{n})^{(t-2\overlap-u -1)_+} e_u
\tag*{(by Equation~\eqref{eq:classicsaga2})} \nonumber \\ 
&= \frac{2\lipschitz}{n} \sum_{u=1}^{(t-2\overlap -1)_+} (1 - \frac{1}{n})^{t-2\overlap-u -1} e_u 
	+ \frac{2\lipschitz}{n} \sum_{u=\max(1, t-2\overlap)}^{t-1} e_u  \, .\label{eq:51}
\end{align}

Note that we have excluded $\tilde e_0$ from our formula, using a generic $\lambda$ multiplier. 
We need to treat the case $u=0$ differently to bound $\ind_{\{u_{i,l}^t = u\}}$.
Because all our initial $\alpha_i$ are initialized to a fixed $\alpha_i^0$, $\{u_i^t = 0\}$ just means that $i$ has not been picked between $0$ and $t-\overlap -1$, i.e. $\{i_v \neq i\ \forall\ v\ \text{s.t.}\ 0 \leq v \leq t - \overlap -1\}$. 
This means that the $\ind_{\{i_u=i\}}$ term in~\eqref{eq:indicatrices} disappears and thus we lose a $\frac{1}{n}$ factor compared to the case where $u>1$.

Let us now evaluate $\lambda$.
We have:
\begin{align}\label{eq:52}
\E \Big[\ind_{\{u_i^t = 0\}} \|\alpha_i^0-f'_i(x^*)\|^2 \Big]
&\leq \E \Big[\ind_{\{i_v \neq i\ \forall\ v\ \text{s.t.}\ 0 \leq v \leq t - \overlap -1\}} \|\alpha_i^0-f'_i(x^*)\|^2 \Big]
\nonumber \\ 
&\leq P\{i_v \neq i\ \forall\ v\ \text{s.t.}\ 0 \leq v \leq t - \overlap -1\} \E \|\alpha_i^0-f'_i(x^*)\|^2
\nonumber \\ 
&\leq (1 - \frac{1}{n})^{(t-\overlap)_+} \E \|\alpha_i^0-f'_i(x^*)\|^2 .
\end{align}

Plugging~\eqref{eq:51} and~\eqref{eq:52} into~\eqref{eq:classicsaga}, we get Lemma~\ref{lma:suboptgt}:
\begin{align}\label{eq:53}
\E\|g_t\|^2
\leq 4\lipschitz e_t 
	+ \frac{4\lipschitz}{n} \sum_{u=1}^{t-1} (1 - \frac{1}{n})^{(t-2\overlap-u -1)_+} e_u 
	+ 4\lipschitz (1 - \frac{1}{n})^{(t-\overlap)_+} \tilde e_0 \,  ,
\end{align}
where we have introduced $\tilde e_0 := \frac{1}{2\lipschitz} \E\|\alpha_i^0 - f'_i(x^*)\|^2$.
Note that in the original \SAGA\ algorithm, a batch gradient is computed to set the $\alpha_i^0 = f'_i(x_0)$. 
In this setting, we can write Lemma~\ref{lma:suboptgt} using $\tilde e_0 \leq e_0$ thanks to~\eqref{eq:classicsaga2}.
In the more general setting where we initialize all $\alpha_i^0$ to a fixed quantity, we cannot use~\eqref{eq:classicsaga2} to bound $\E\|\alpha_i^0 - f'_i(x^*)\|^2$ which means that we have to introduce $\tilde e_0$.

\subsection{Master inequality derivation}\label{apxB:master}
Now, if we combine the bound on $\E\|g_{t}\|^2$ which we just derived (i.e. Lemma~\ref{lma:suboptgt}) with Lemma~\ref{lma:1}, we get:
\begin{equation}
\begin{aligned}
a_{t+1} 
\leq &(1 - \frac{\stepsize\strongconvex}{2}) a_t 
	- 2\stepsize e_t
\\
	&+ 4\lipschitz\stepsize^2 C_1e_t 
	+ \frac{4\lipschitz\stepsize^2 C_1}{n} \sum_{u=1}^{t-1} (1 - \frac{1}{n})^{(t-2\overlap-u -1)_+} e_u
	+ 4\lipschitz \stepsize^2 C_1(1 - \frac{1}{n})^{(t-\overlap)_+} \tilde e_0
\\
	&+4\lipschitz\stepsize^2 C_2\sum_{u=(t-\overlap)_+}^{t-1} e_u 
	+4\lipschitz\stepsize^2 C_2 \sum_{u=(t-\overlap)_+}^{t-1} (1 - \frac{1}{n})^{(u - \overlap)_+} \tilde e_0
\\
	&+ \frac{4\lipschitz\stepsize^2 C_2}{n} \sum_{u=(t-\overlap)_+}^{t-1} \sum_{v=1}^{u-1} (1-\frac{1}{n})^{(u - 2\overlap - v -1)_+}e_v  \, .
\end{aligned}
\end{equation}

If we define $H_t := \sum_{u=1}^{t-1} (1 - \frac{1}{n})^{(t-2\overlap-u-1)_+} e_u$, then we get:
\begin{equation}\label{eq:master}
\begin{aligned} 
a_{t+1} 
\leq &(1 - \frac{\stepsize\strongconvex}{2}) a_t 
	- 2\stepsize e_t
\\
	&+ 4\lipschitz\stepsize^2 C_1 \big(e_t  + (1 - \frac{1}{n})^{(t-\overlap)_+} \tilde e_0 \big)
	+ \frac{4\lipschitz\stepsize^2 C_1}{n} H_t
\\
	&+4\lipschitz\stepsize^2 C_2\sum_{u=(t-\overlap)_+}^{t-1} (e_u +  (1 - \frac{1}{n})^{(u - \overlap)_+} \tilde e_0 \big)
	+\frac{4\lipschitz\stepsize^2 C_2}{n} \sum_{u=(t-\overlap)_+}^{t-1} H_u  \, ,
\end{aligned}
\end{equation}
which is the master inequality~\eqref{master}.

\subsection{Lyapunov function and associated recursive inequality}\label{apxB:lyapunov}
We define $\lyapunov_t := \sum_{u=0}^t (1-\contraction)^{t-u}a_u$ for some target contraction rate $\contraction < 1$ to be defined later.
We have:
\begin{align}
\lyapunov_{t+1} 
&= (1 - \contraction)^{t+1}a_0 
	+ \sum_{u=1}^{t+1}(1 - \contraction)^{t+1-u}a_u
= (1 - \contraction)^{t+1}a_0 
	+ \sum_{u=0}^t(1 - \contraction)^{t-u}a_{u+1} \,  .
\end{align}

We now use our new bound on $a_{t+1}$,~\eqref{eq:master}:
\begin{align} \label{eq:rutMaster}
\lyapunov_{t+1} 
&\leq (1 - \contraction)^{t+1}a_0 
	+ \sum_{u = 0}^t(1 - \contraction)^{t-u} \Big[
		(1 - \frac{\stepsize\strongconvex}{2}) a_u 
		- 2\stepsize e_u
		+ 4\lipschitz\stepsize^2 C_1 \big(e_u  + (1 - \frac{1}{n})^{(u-\overlap)_+} \tilde e_0 \big)
\notag \\		
		&\qquad \qquad \qquad \qquad \qquad \qquad \qquad 
		+ \frac{4\lipschitz\stepsize^2 C_1}{n} H_u
		+\frac{4\lipschitz\stepsize^2 C_2}{n} \sum_{v=(u-\overlap)_+}^{u-1} H_v	
\notag \\		
		&\qquad \qquad \qquad \qquad \qquad \qquad \qquad 
		+4\lipschitz\stepsize^2 C_2\sum_{v=(u-\overlap)_+}^{u-1} (e_v +  (1 - \frac{1}{n})^{(v - \overlap)_+} \tilde e_0 \big)
	\Big]
\notag \\
&\leq (1 - \contraction)^{t+1}a_0 
	+ (1-\frac{\stepsize\strongconvex}{2})\lyapunov_t 
\notag \\
	&\qquad \qquad + \sum_{u = 0}^t(1 - \contraction)^{t-u} \Big[
		- 2\stepsize e_u
		+ 4\lipschitz\stepsize^2 C_1 \big(e_u  + (1 - \frac{1}{n})^{(u-\overlap)_+} \tilde e_0 \big)
\notag \\		
		&\qquad \qquad \qquad \qquad \qquad \qquad
		+ \frac{4\lipschitz\stepsize^2 C_1}{n} H_u
		+\frac{4\lipschitz\stepsize^2 C_2}{n} \sum_{v=(u-\overlap)_+}^{u-1} H_v	
\notag \\
		&\qquad \qquad \qquad \qquad \qquad \qquad
		+4\lipschitz\stepsize^2 C_2\sum_{v=(u-\overlap)_+}^{u-1} (e_v +  (1 - \frac{1}{n})^{(v - \overlap)_+} \tilde e_0 \big)
	\Big] .
\end{align}

We can now rearrange the sums to expose a simple sum of $e_u$ multiplied by factors $r_u^t$:
\begin{align}\label{apx:Lyapunov}
\lyapunov_{t+1} \leq (1 - \contraction)^{t+1}a_0 + (1-\frac{\stepsize\strongconvex}{2})\lyapunov_t + \sum_{u=1}^t r_u^t e_u + r_0^t \tilde e_0  \, .
\end{align}

\subsection{Proof of Lemma~\ref{lma:3}}\label{apxB:lma3}
We want to make explicit what conditions on $\contraction$ and $\stepsize$ are necessary to ensure that $r_u^t$ is negative for all $u \geq 1$. 
Since each $e_u$ is positive, we will then be able to safely drop the sum term from the inequality. 
The $r_0^t$ term is a bit trickier and is handled separately. 
Indeed, trying to enforce that $r_0^t$ is negative results in a significantly worse condition on $\stepsize$ and eventually a convergence rate smaller by a factor of $n$ than our final result.
Instead, we handle this term directly in the Lyapunov function.

\paragraph{Computation of $r_u^t$.}
Let's now make the multiplying factor explicit. 
We assume $u \geq 1$.

We split $r_u^t$ into five parts coming from~\eqref{eq:rutMaster}: 
\begin{itemize}
\item $r_1$, the part coming from the $-2\stepsize e_u$ terms;
\item $r_2$, coming from $4\lipschitz\stepsize^2 C_1 e_u$;
\item $r_3$, coming from $\frac{4\lipschitz\stepsize^2 C_1}{n} H_u$;
\item $r_4$, coming from $4\lipschitz\stepsize^2 C_2\sum_{v=(u-\overlap)_+}^{u-1} e_v$;
\item $r_5$, coming from $\frac{4\lipschitz\stepsize^2 C_2}{n} \sum_{v=(u-\overlap)_+}^{u-1} H_v$.
\end{itemize}

$r_1$ is easy to derive. Each of these terms appears only in one inequality. 
So for $u$ at time $t$, the term is:
\begin{equation}\label{eq:r1}
r_1 = -2 \stepsize (1- \contraction)^{t-u} .
\end{equation}

For much the same reasons, $r_2$ is also easy to derive and is: 
\begin{equation}\label{eq:r2}
r_2 = 4\lipschitz\stepsize^2 C_1(1- \contraction)^{t-u} .
\end{equation}

$r_3$ is a bit trickier, because for a given $v > 0$ there are several $H_u$ which contain $e_v$. 
The key insight is that we can rewrite our double sum in the following manner:
\begin{align}
\sum_{u=0}^t(1 - \contraction)^{t-u} &\sum_{v=1}^{u-1} (1-\frac{1}{n})^{(u - 2\overlap -v -1)_+} e_v 
\nonumber \\
&= \sum_{v=1}^{t-1} e_v \sum_{u=v+1}^{t}(1 - \contraction)^{t-u}  (1-\frac{1}{n})^{(u - 2\overlap -v -1)_+}
\nonumber \\
&\leq \sum_{v=1}^{t-1} e_v \Big[
		\sum_{u=v+1}^{\min(t, v + 2\overlap)}(1 - \contraction)^{t-u} 
		+ \sum_{u=v + 2 \overlap +1}^{t}(1 - \contraction)^{t-u}  (1-\frac{1}{n})^{u - 2\overlap -v -1}
	\Big]
\nonumber \\
&\leq \sum_{v=1}^{t-1} e_v \Big[
		2\overlap (1 - \contraction)^{t-v-2\overlap}
		+ (1 - \contraction)^{t-v-2\overlap-1} \sum_{u=v + 2 \overlap +1}^{t}q^{u - 2\overlap -v -1}
	\Big]
\nonumber \\
&\leq \sum_{v=1}^{t-1} (1 - \contraction)^{t-v} e_v (1 - \contraction)^{-2\overlap -1} \big[
		2\overlap
		+ \frac{1}{1-q}
	\big],
\end{align}
where we have defined: 
\begin{equation} \label{eq:qDefinition}
q :=\frac{1-1/n}{1-\contraction}, \quad \text{with the assumption $\contraction < \frac{1}{n}$}  \, .
\end{equation}
Note that we have bounded the $\min(t, v+2\overlap)$ term by $v+2\overlap$ in the first sub-sum, effectively adding more positive terms. 

This gives us that at time $t$, for $u$: 
\begin{equation}\label{eq:r3}
r_3 \leq \frac{4\lipschitz\stepsize^2 C_1}{n}(1 - \contraction)^{t-u} (1 - \contraction)^{-2\overlap -1} \big[2\overlap + \frac{1}{1-q}\big] .
\end{equation}

For $r_4$ we use the same trick:
\begin{align}
\sum_{u=0}^t (1-\contraction)^{t-u} \sum_{v=(u-\overlap)_+}^{u-1} e_v
&= \sum_{v=0}^{t-1} e_v  \sum_{u=v+1}^{\min(t, v+\overlap)} (1-\contraction)^{t-u}
\nonumber \\
&\leq \sum_{v=0}^{t-1} e_v  \sum_{u=v+1}^{v+\overlap} (1-\contraction)^{t-u}
\leq \sum_{v=0}^{t-1} e_v \overlap (1-\contraction)^{t-v-\overlap} .
\end{align}

This gives us that at time $t$, for $u$: 
\begin{equation}\label{eq:r4}
r_4 \leq 4\lipschitz\stepsize^2 C_2(1 - \contraction)^{t-u} \overlap (1 - \contraction)^{-\overlap}\,  .
\end{equation}

Finally we compute $r_5$ which is the most complicated term. 
Indeed, to find the factor of $e_w$ for a given $w > 0$, one has to compute a triple sum, $\sum_{u = 0}^t(1 - \contraction)^{t-u} \sum_{v=(u-\overlap)_+}^{u-1} H_v$.
We start by computing the factor of $e_w$ in the inner double sum, $\sum_{v=(u-\overlap)_+}^{u-1} H_v$.

\begin{align}
\sum_{v=(u-\overlap)_+}^{u-1} \sum_{w=1}^{v-1}(1-\frac{1}{n})^{(v -2\overlap -w -1)_+} e_w
= \sum_{w=1}^{u-2} e_w \sum_{v=\max(w+1, u-\overlap)}^{u-1} (1-\frac{1}{n})^{(v -2\overlap -w -1)_+}  \, .
\end{align}
Now there are at most $\overlap$ terms for each $e_w$. 
If $w \leq u - 3\overlap -1$, then the exponent is positive in every term and it is always bigger than $u -3\overlap -1 -w$, which means we can bound the sum by $\overlap (1-\frac{1}{n})^{u -3\overlap -1 -w}$. 
Otherwise we can simply bound the sum by $\overlap$. We get:
\begin{align}
\sum_{v=(u-\overlap)_+}^{u-1} H_v
\leq \sum_{w=1}^{u-2} 
	\big[
		\ind_{\{u -3\overlap \leq w \leq u -2\}}\overlap	
		+ \ind_{\{w \leq u -3\overlap -1\}}\overlap (1-\frac{1}{n})^{u -3\overlap -1 -w}
	\big] e_w  \, .
\end{align}

This means that for $w$ at time $t$:
\begin{align}\label{eq:r5}
r_5 
&\leq \frac{4\lipschitz\stepsize^2 C_2}{n} \sum_{u=0}^t (1 -\contraction)^{t-u}
	\big[
		\ind_{\{u -3\overlap \leq w \leq u -2\}}\overlap	
		+ \ind_{\{w \leq u -3\overlap -1\}}\overlap (1-\frac{1}{n})^{u -3\overlap -1 -w}
	\big]
\nonumber \\
&\leq \frac{4\lipschitz\stepsize^2 C_2}{n}
	\Big[
		\sum_{u=w+2}^{\min(t, w +3\overlap)} \overlap (1 -\contraction)^{t-u}
		+ \sum_{u=w +3\overlap +1}^t \overlap (1-\frac{1}{n})^{u -3\overlap -1 -w} (1 -\contraction)^{t-u}
	\Big]
\nonumber \\
&\leq \frac{4\lipschitz\stepsize^2 C_2}{n} \overlap
	\Big[
		(1 -\contraction)^{t-w}(1 -\contraction)^{-3\overlap} 3\overlap
\nonumber \\ &\qquad \qquad \qquad
		+ (1 -\contraction)^{t-w}(1 -\contraction)^{-1 -3\overlap}\sum_{u=w +3\overlap +1}^t (1-\frac{1}{n})^{u -3\overlap -1 -w} (1 -\contraction)^{-u +3\overlap +1 +w}
	\Big]
\nonumber \\
&\leq \frac{4\lipschitz\stepsize^2 C_2}{n} \overlap (1 -\contraction)^{t-w}(1 -\contraction)^{-3\overlap-1}
	\big(
		3\overlap
		+ \frac{1}{1-q}
	\big) \,  .
\end{align}

By combining the five terms together (\eqref{eq:r1},~\eqref{eq:r2},~\eqref{eq:r3},~\eqref{eq:r4} and~\eqref{eq:r5}), we get that $\forall u$ s.t. $1 \leq u \leq t$:
\begin{equation}\label{eq:rut}
\begin{aligned}
r_u^t \leq (1- \contraction)^{t-u} 
	\Big[
		&-2 \stepsize
		+4\lipschitz\stepsize^2 C_1
		+\frac{4\lipschitz\stepsize^2 C_1}{n}
			(1 - \contraction)^{-2\overlap -1} \big(
				2\overlap 
				+ \frac{1}{1-q}
			\big)
\\
		&+4\lipschitz\stepsize^2 C_2 \overlap (1-\contraction)^{-\overlap}
		+\frac{4\lipschitz\stepsize^2 C_2}{n} \overlap (1 -\contraction)^{-3\overlap -1}
			\big(
				3\overlap
				+ \frac{1}{1-q}
		\big)
	\Big] .
\end{aligned}
\end{equation}

\paragraph{Computation of $r_0^t$.}
Recall that we treat the $\tilde e_0$ term separately in Section~\ref{apxB:lma2}.
The initialization of \SAGA\ creates an initial synchronization, which means that the contribution of $\tilde e_0$ in our bound on $\E\|g_t\|^2$~\eqref{eq:53} is roughly $n$ times bigger than the contribution of any $e_u$ for $1 < u < t$.\footnote{This is explained in details right before~\eqref{eq:52}.}
In order to safely handle this term in our Lyapunov inequality, we only need to prove that it is bounded by a reasonable constant.
Here again, we split $r_0^t$ in five contributions coming from~\eqref{eq:rutMaster}:
\begin{itemize}
\item $r_1$, the part coming from the $-2\stepsize e_u$ terms;
\item $r_2$, coming from $4\lipschitz\stepsize^2 C_1 e_u$;
\item $r_3$, coming from $4\lipschitz\stepsize^2 C_1 (1 -\frac{1}{n})^{(u -\overlap)_+} \tilde e_0$;
\item $r_4$, coming from $4\lipschitz\stepsize^2 C_2\sum_{v=(u-\overlap)_+}^{u-1} e_v$;
\item $r_5$, coming from $4\lipschitz\stepsize^2 C_2\sum_{v=(u-\overlap)_+}^{u-1} (1 -\frac{1}{n})^{(v -\overlap)_+} \tilde e_0$.
\end{itemize}
Note that there is no $\tilde e_0$ in $H_t$, which is why we can safely ignore these terms here.

We have $r_1 = -2\stepsize (1 -\contraction)^t$ and $r_2=4\lipschitz\stepsize^2 C_1 (1 -\contraction)^t$.

Let us compute $r_3$.
\begin{align}
\sum_{u=0}^t (1 -\contraction)^{t -u} &(1 -\frac{1}{n})^{(u -\overlap)_+}
\nonumber \\
&= \sum_{u=0}^{\min(t, \overlap)} (1 -\contraction)^{t -u} 
	+ \sum_{u=\overlap +1}^t (1 -\contraction)^{t -u} (1 -\frac{1}{n})^{u -\overlap}	
\nonumber \\
&\leq (\overlap + 1) (1 -\contraction)^{t -\overlap} 
	+ (1 -\contraction)^{t -\overlap} \sum_{u=\overlap +1}^t (1 -\contraction)^{\overlap -u} (1 -\frac{1}{n})^{u -\overlap}	
\nonumber \\
&\leq (1 -\contraction)^t (1 -\contraction)^{-\overlap}
	\big(
		\overlap + 1 + \frac{1}{1 -q}
	\big)  \, .
\end{align}
This gives us:
\begin{align}
r_3 \leq (1 -\contraction)^t 4\lipschitz\stepsize^2 C_1 (1 -\contraction)^{-\overlap} \big(\overlap + 1 + \frac{1}{1 -q} \big)  \, .
\end{align}

We have already computed $r_4$ for $u>0$ and the computation is exactly the same for $u=0$. $r_4 \leq (1 - \contraction)^t 4\lipschitz\stepsize^2 C_2 \overlap(1-\contraction)^{-\overlap}$ .

Finally we compute $r_5$.
\begin{align}
\sum_{u=0}^t (1 -\contraction)^{t -u} &\sum_{v=(u -\overlap)_+}^{u -1} (1 -\frac{1}{n})^{(v -\overlap)_+}
\nonumber \\
&=\sum_{v=1}^{t -1} \sum_{u=v +1}^{\min(t, v +\overlap)} (1 -\contraction)^{t -u} (1 -\frac{1}{n})^{(v -\overlap)_+}
\nonumber \\
&\leq \sum_{v=1}^{\min(t-1, \overlap)} \sum_{u=v +1}^{v +\overlap} (1 -\contraction)^{t -u}
	+ \sum_{v=\overlap + 1}^{t -1} \sum_{u=v +1}^{\min(t, v +\overlap)} (1 -\contraction)^{t -u} (1 -\frac{1}{n})^{v -\overlap}
\nonumber \\
&\leq \overlap^2 (1 -\contraction)^{t -2\overlap}
	+ \sum_{v=\overlap +1}^{t -1} (1 -\frac{1}{n})^{v -\overlap} \overlap (1 -\contraction)^{t -v -\overlap} 
\nonumber \\
&\leq \overlap^2 (1 -\contraction)^{t -2\overlap}
	+ \overlap (1 -\contraction)^t (1 -\contraction)^{-2\overlap}\sum_{v=\overlap +1}^{t -1} (1 -\frac{1}{n})^{v -\overlap} \overlap (1 -\contraction)^{-v +\overlap} 
\nonumber  \\
&\leq (1 -\contraction)^t (1 -\contraction)^{-2\overlap}\big(\overlap^2 + \overlap \frac{1}{1 -q}\big)  \, .
\end{align}
Which means:
\begin{align}
r_5 \leq (1 -\contraction)^t 4\lipschitz\stepsize^2 C_2 (1 -\contraction)^{-2\overlap}\big(\overlap^2 + \overlap \frac{1}{1 -q}\big) .
\end{align}

Putting it all together, we get that: $\forall t \geq 0$
\begin{equation} \label{r0t}
\begin{aligned}
r_0^t \leq (1 -\contraction)^t 
	\Big[    \Big( &-2 \stepsize + 
		4\lipschitz\stepsize^2 C_1+4\lipschitz\stepsize^2 C_2 \overlap(1-\contraction)^{-\overlap} \Big) \frac{e_0}{\tilde e_0} \\
		&+4\lipschitz\stepsize^2 C_1 (1 -\contraction)^{-\overlap} \big(\overlap + 1 + \frac{1}{1 -q} \big) 
		+4\lipschitz\stepsize^2 C_2 \overlap (1 -\contraction)^{-2\overlap}\big(\overlap + \frac{1}{1 -q}\big)
	\Big] .
\end{aligned}
\end{equation}

\paragraph{Sufficient condition for convergence.}
We need all $r_u^t, u \geq 1$ to be negative so we can safely drop them from~\eqref{apx:Lyapunov}. 
Note that for every $u$, this is the same condition. 
We will reduce that condition to a second-order polynomial sign condition.
We also remark that since $\stepsize \geq 0$, we can upper bound our terms in $\stepsize$ and $\stepsize^2$ in this upcoming polynomial, which will give us sufficient conditions for convergence.

Now, as $\stepsize$ is part of $C_2$, we need to expand it once more to find our conditions. 
We have:
\begin{align*}
C_1 &= 1 + \sqrt{\sparsity}\overlap ; \qquad
C_2 = \sqrt{\sparsity} + \stepsize\strongconvex C_1  \, .
\end{align*}

Dividing the bracket in~\eqref{eq:rut} by~$\stepsize$ and rearranging as a second degree polynomial, we get the condition:
\begin{align} 
4\lipschitz &\Bigg(
	C_1 
	+ \frac{C_1}{n}(1-\contraction)^{-2\overlap -1}\Big[2\overlap + \frac{1}{1-q}\Big]
	+ \Big[\sqrt{\sparsity}\overlap(1-\contraction)^{-\overlap} + \frac{\sqrt{\sparsity}\overlap}{n}(1 -\contraction)^{-3\overlap -1}(3\overlap + \frac{1}{1 -q}) \Big]
	\Bigg) \stepsize
\nonumber \\
	&+ 8\strongconvex C_1 \lipschitz \overlap \Big[(1-\contraction)^{-\overlap} + \frac{1}{n}(1 -\contraction)^{-3\overlap -1}(3\overlap + \frac{1}{1 -q})
	\Big] \stepsize^2
	+ 2
\leq 0  \, . \label{eq:ConvergenceCondition}
\end{align}
The discriminant of this polynomial is always positive, so $\stepsize$ needs to be between its two roots. 
The smallest is negative, so the condition is not relevant to our case (where $\stepsize > 0$). 
By solving analytically for the positive root~$\phi$, we get an upper bound condition on~$\stepsize$ that can be used for any overlap~$\overlap$ and guarantee convergence. 
Unfortunately, for large~$\overlap$, the upper bound becomes exponentially small because of the presence of~$\overlap$ in the exponent in~\eqref{eq:ConvergenceCondition}. 
More specifically, by using the bound $1/(1-\contraction) \leq \exp(2\contraction)$\footnote{This bound can be derived from the inequality $(1-x/2) \geq \exp(-x)$ which is valid for $0 \leq x \leq 1.59$.} and thus $(1-\contraction)^{-\overlap} \leq \exp(2 \overlap \contraction)$ in~\eqref{eq:ConvergenceCondition}, we would obtain factors of the form $\exp(\overlap/n)$ in the denominator for the root~$\phi$ (recall that $\contraction < 1/n$). 

Our Lemma~\ref{lma:3} is derived instead under the assumption that $\overlap \leq \mathcal{O}(n)$, with the constants chosen in order to make the condition~\eqref{eq:ConvergenceCondition} more interpretable and to relate our convergence result with the standard SAGA convergence (see Theorem~\ref{th1}). As explained in Appendix~\ref{apxD}, the assumption that $\overlap \leq \mathcal{O}(n)$ appears reasonable in practice. 
First, by using Bernoulli's inequality, we have:
\begin{equation} \label{eq:Bernouilli}
(1 - \contraction)^{k \overlap} \geq 1 - k\overlap \contraction \qquad \textnormal{for integers} \quad k\overlap \geq 0 \, .
\end{equation}

To get manageable constants, we make the following slightly more restrictive assumptions on the target rate~$\contraction$\footnote{Note that we already expected $\contraction < 1/n$.} and overlap~$\overlap$:\footnote{This bound on $\overlap$ is reasonable in practice, see Appendix~\ref{apxD}.}
\begin{align}
\contraction &\leq \frac{1}{4n} \label{eq:assumptionContraction}
\\
\overlap &\leq \frac{n}{10} \, . \label{eq:assumptionOverlap}
\end{align}

We then have:
\begin{align}
\frac{1}{1-q} &\leq \frac{4n}{3} & &
\\
\frac{1}{1-\contraction} &\leq \frac{4}{3} & &
\\
k\overlap\contraction &\leq \frac{3}{40} & &
\text{for $1 \leq k \leq 3$}
\\
(1 -\contraction)^{-k\overlap} &\leq \frac{1}{1-k\overlap \contraction} \leq \frac{40}{37} & &
\text{for $1 \leq k \leq 3$ and by using~\eqref{eq:Bernouilli}}.
\end{align}

We can now upper bound loosely the three terms in brackets appearing in~\eqref{eq:ConvergenceCondition} as follows:
\begin{align}
(1-\contraction)^{-2\overlap -1} \big[2\overlap + \frac{1}{1-q}\big] &\leq 3n \label{eq:Simp1}
\\
\sqrt{\sparsity}\overlap(1-\contraction)^{-\overlap} + \frac{\sqrt{\sparsity}\overlap}{n}(1 -\contraction)^{-3\overlap -1}(3\overlap + \frac{1}{1 -q}) 
\leq 4 \sqrt{\sparsity}\overlap &\leq 4C_1
\\
(1-\contraction)^{-\overlap} + \frac{1}{n}(1 -\contraction)^{-3\overlap -1}(3\overlap + \frac{1}{1 -q}) &\leq 4 \, .
\label{eq:Simp2}
\end{align}

By plugging~\eqref{eq:Simp1}--\eqref{eq:Simp2} into~\eqref{eq:ConvergenceCondition}, we get the simpler sufficient condition on~$\stepsize$:
\begin{align}
-1 + 16\lipschitz C_1 \stepsize + 16\lipschitz C_1\strongconvex\overlap\stepsize^2 \leq 0 \, .
\end{align}

The positive root $\phi$ is:
\begin{align}\label{eq:phi}
\phi = \frac{16\lipschitz C_1(\sqrt{1 + \frac{\strongconvex\overlap}{4\lipschitz C_1}} -1)}{32 \lipschitz C_1\strongconvex\overlap}
= \frac{\sqrt{1 + \frac{\strongconvex\overlap}{4\lipschitz C_1}} -1}{2\strongconvex\overlap}  \, .
\end{align}

We simplify it further by using the inequality:\footnote{This inequality can be derived by using the concavity property $f(y) \leq f(x) + (y-x) f'(x)$ on the differentiable concave function $f(x)=\sqrt{x}$ with $y=1$.}
\begin{equation}\label{eq:concavesqrt}
\sqrt{x} - 1 \geq \frac{x - 1}{2 \sqrt{x}}  \qquad \forall x > 0 \, .
\end{equation}

Using~\eqref{eq:concavesqrt} in~\eqref{eq:phi}, and recalling that $\kappa := \lipschitz / \strongconvex$, we get:
\begin{align}
\phi \geq \frac{1}{16\lipschitz C_1 \sqrt{1 + \frac{\overlap}{4\kappa C_1}}} \, .
\end{align}

Since $\frac{\overlap}{C_1} = \frac{\overlap}{1 + \sqrt{\sparsity}\overlap} \leq \min(\overlap, \frac{1}{\sqrt{\sparsity}})$, we get that a sufficient condition on our stepsize is:
\begin{equation}
\stepsize \leq \frac{1}{16\lipschitz (1 + \sqrt{\sparsity} \overlap) \sqrt{1 + \frac{1}{4\kappa} \min(\overlap, \frac{1}{\sqrt{\sparsity}})}}  \, .
\end{equation}

Subject to our conditions on $\stepsize$, $\contraction$ and $\overlap$, we then have that: $r_u^t \leq 0\ \text{for all}\ u\ \text{s.t.}\ 1 \leq u \leq t$. 
This means we can rewrite~\eqref{apx:Lyapunov} as:
\begin{align}\label{eq:lya2}
\lyapunov_{t+1} \leq (1 - \contraction)^{t+1}a_0 + (1-\frac{\stepsize\strongconvex}{2})\lyapunov_t + r_0^t \tilde e_0  \, .
\end{align}

Now, we could finish the proof from this inequality, but it would only give us a convergence result in terms of $a_t = \E \|x_t - x^*\|^2$. A better result would be in terms of the suboptimality at $\hat x_t$ (because $\hat x_t$ is a real quantity in the algorithm whereas $x_t$ is virtual). Fortunately, to get such a result, we can easily adapt~\eqref{eq:lya2}.

We make $e_t$ appear on the left side of~\eqref{eq:lya2}, by adding $\stepsize$ to $r_t^t$ in~\eqref{apx:Lyapunov}:\footnote{We could use any multiplier from $0$ to $2\stepsize$, but choose $\stepsize$ for simplicity. For this reason and because our analysis of the $r_t^t$ term was loose, we could derive a tighter bound, but it does not change the leading terms.}
\begin{align}\label{eq:newlyapunov}
\stepsize e_t + \lyapunov_{t+1} \leq (1 - \contraction)^{t+1}a_0 + (1-\frac{\stepsize\strongconvex}{2})\lyapunov_t + \sum_{u=1}^{t-1} r_u^t e_u + r_0^t \tilde e_0 + (r_t^t + \stepsize)e_t .
\end{align}

We now require the stronger property that $\stepsize +r_t^t \leq 0$, which translates to replacing $-2\stepsize$ with $-\stepsize$ in~\eqref{eq:rut}:
\begin{equation}
\begin{aligned}
0 \geq 
	\Big[
		&- \stepsize
		+4\lipschitz\stepsize^2 C_1
		+\frac{4\lipschitz\stepsize^2 C_1}{n}
			(1 - \contraction)^{-2\overlap - 1} \big(
				2\overlap
				+\frac{1}{1-q}
			\big) 
\\
		&+4\lipschitz\stepsize^2  C_2 \overlap(1-\contraction)^{-\overlap}
		+\frac{4\lipschitz\stepsize^2  C_2}{n} \overlap (1 -\contraction)^{-3\overlap-1}
			\big(
				3\overlap
				+ \frac{1}{1-q}
		\big)
	\Big] .
\end{aligned}
\end{equation}

We can easily derive a new stronger condition on $\stepsize$ under which we can drop all the $e_u, u > 0$ terms in~\eqref{eq:newlyapunov}:
\begin{align}
\stepsize \leq \stepsize^* = \frac{1}{32\lipschitz (1 + \sqrt{\sparsity} \overlap) \sqrt{1 + \frac{1}{8\kappa} \min(\overlap, \frac{1}{\sqrt{\sparsity}})}},
\end{align}
and thus under which we get:
\begin{align}\label{eq:lyapu3}
\stepsize  e_t + \lyapunov_{t+1} \leq (1 - \contraction)^{t+1}a_0 + (1-\frac{\stepsize\strongconvex}{2})\lyapunov_t + r_0^t \tilde e_0 .
\end{align}

This finishes the proof of Lemma~\ref{lma:3}. 
\qed

\subsection{Proof of Theorem~\ref{thm:convergence}}\label{apxB:th2}
\paragraph{End of Lyapunov convergence.}
We continue with the assumptions of Lemma~\ref{lma:3} which gave us~\eqref{eq:lyapu3}.
Thanks to~\eqref{r0t}, we can also rewrite $r_0^t \leq (1 -\contraction)^{t+1} A$, where $A$ is a constant which depends on $n$, $\sparsity$, $\stepsize$ and $\lipschitz$ but is finite and crucially does not depend on $t$.
In fact, by reusing similar arguments as in~\ref{apxB:lma3}, we can show the bound $A \leq \stepsize n$ under the assumptions of Lemma~\ref{lma:3} (including $\stepsize \leq \stepsize^*$).\footnote{In particular, note that $e_0$ does not appear in the definition of $A$ because it turns out that the parenthesis group multiplying $e_0$ in~\eqref{r0t} is negative. Indeed, it contains less positive terms than~\eqref{eq:rut} which we showed to be negative under the assumptions from Lemma~\ref{lma:3}.}
We then have:
\begin{align}
\lyapunov_{t+1}  \leq \stepsize e_t + \lyapunov_{t+1} 
&\leq (1-\frac{\stepsize\strongconvex}{2})\lyapunov_t  +(1 - \contraction)^{t+1} (a_0 + A \tilde e_0) 
\nonumber \\
&\leq (1-\frac{\stepsize\strongconvex}{2})^{t+1}\lyapunov_0 + (a_0 + A \tilde e_0) \sum_{k=0}^{t+1} (1 - \contraction)^{t+1 -k} (1-\frac{\stepsize\strongconvex}{2})^k .
\end{align}

We have two linearly contracting terms. 
The sum contracts linearly with the minimum geometric rate factor between $\stepsize \strongconvex / 2$ and $\contraction$. 
If we define $m := \min(\contraction, \stepsize\strongconvex/2)$, $M := \max(\contraction, \stepsize\strongconvex/2)$ and $\contraction^* := \nu m$ with $0 <\nu < 1$,\footnote{$\nu$ is introduced to circumvent the problematic case where $\contraction$ and $\stepsize \strongconvex / 2$ are too close together, which does not prevent the geometric convergence, but makes the constant $\frac{1}{1-\eta}$ potentially very big (in the case both terms are equal, the sum even becomes an annoying linear term in t).} we then get:\footnote{Note that if $m \neq \contraction$, we can perform the index change $t+1-k \rightarrow k$ to get the sum.}
\begin{align}
\stepsize e_t \leq 
\stepsize e_t + \lyapunov_{t+1} 
&\leq (1-\frac{\stepsize\strongconvex}{2})^{t+1}\lyapunov_0 + (a_0 + A \tilde e_0) \sum_{k=0}^{t+1} (1 - m)^{t+1-k} (1-M)^k
\nonumber \\
&\leq (1-\frac{\stepsize\strongconvex}{2})^{t+1}\lyapunov_0 + (a_0 + A \tilde e_0) \sum_{k=0}^{t+1} (1 - \contraction^*)^{t+1-k} (1-M)^k
\nonumber \\
&\leq (1-\frac{\stepsize\strongconvex}{2})^{t+1}\lyapunov_0 + (a_0 + A \tilde e_0) (1 - \contraction^*)^{t+1} \sum_{k=0}^{t+1} (1 - \contraction^*)^{-k} (1-M)^k
\nonumber \\
&\leq (1-\frac{\stepsize\strongconvex}{2})^{t+1}\lyapunov_0 + (1 - \contraction^*)^{t+1} \frac{1}{1 -\eta} (a_0 + A \tilde e_0)
\nonumber \\
&\leq (1 - \contraction^*)^{t+1} \big(a_0 + \frac{1}{1 -\eta} (a_0 + A \tilde e_0) \big) ,
\end{align}
where $\eta := \frac{1-M}{1-\contraction^*}$. 
We have $\frac{1}{1 - \eta} = \frac{1 - \contraction^*}{M - \contraction^*}$.

By taking $\nu = \frac{4}{5}$ and setting $\contraction = \frac{1}{4n}$ -- its maximal value allowed by the assumptions of Lemma~\ref{lma:3} -- we get $M \geq \frac{1}{4n}$ and $\contraction^* \leq \frac{1}{5n}$, which means $\frac{1}{1 - \eta} \leq 20n$.

All told, using $A \leq \stepsize n$, we get:
\begin{equation}
e_t \leq (1 - \contraction^*)^{t+1} \tilde C_0,
\end{equation}
where
\begin{equation}
\tilde C_0 := \frac{21n}{\stepsize}\Big(\|x_0 - x^*\|^2 + \stepsize \frac{n}{2L}\E\|\alpha_i^0 - f'_i(x^*)\|^2 \Big) \, .
\end{equation}

Since we set $\contraction = \frac{1}{4n}, \nu = \frac{4}{5}$, we have $\nu \contraction = \frac{1}{5n}$.
Using a stepsize $\stepsize = \frac{a}{\lipschitz}$ as in Theorem~\ref{thm:convergence}, we get $\nu \frac{\stepsize \strongconvex}{2} = \frac{2a}{5 \kappa}$.
We thus obtain a geometric rate of $\contraction^* = \min \{\frac{1}{5n}, a\frac{2}{5\kappa}\}$, which we simplified to $\frac{1}{5} \min \{\frac{1}{n}, a\frac{1}{\kappa}\}$ in Theorem~\ref{thm:convergence}, finishing the proof. We also observe that $\tilde C_0 \leq \frac{60n}{\stepsize} C_0$, with $C_0$ defined in Theorem~\ref{th1}.
\qed

\subsection{Proof of Corollary~\ref{thm:bigdata} (speedup regimes)}
Referring to~\citet{qsaga} and our own Theorem~\ref{th1}, the geometric rate factor of \SAGA\ is $\frac{1}{5}\min\{\frac{1}{n}, \frac{a}{\kappa}\}$ for a stepsize of $\stepsize = \frac{a}{5L}$. We start by proving the first part of the corollary which considers the step size $\stepsize = \frac{a}{L}$ with $a = a^*(\overlap)$.
We distinguish between two regimes to study the parallel speedup our algorithm obtains and to derive a condition on $\overlap$ for which we have a linear speedup.

\paragraph{Big Data.} 
In this regime, $n > \kappa$ and the geometric rate factor of sequential \SAGA\ is $\frac{1}{5n}$. 
To get a linear speedup (up to a constant factor), we need to enforce $\contraction^* = \Omega(\frac{1}{n})$.
We recall that $\contraction^* = \min \{\frac{1}{5n}, a\frac{1}{5\kappa}\}$.

We already have $\frac{1}{5n} = \Omega(\frac{1}{n})$. This means that we need $\overlap$ to verify $\frac{a^*(\overlap)}{5\kappa} = \Omega(\frac{1}{n})$, where $a^*(\overlap) = \frac{1}{32 \left(1+ \overlap  \sqrt \sparsity \right) \xi(\kappa, \sparsity, \overlap)}$ according to Theorem~\ref{thm:convergence}.
Recall that $\xi(\kappa, \sparsity, \overlap) := \sqrt{1 + \frac{1}{8 \kappa}  \min\{\frac{1}{\sqrt{\sparsity}}, \overlap\} }$.
Up to a constant factor, this means we can give the following sufficient condition:

\begin{equation}
\frac{1}{\kappa \left(1+ \overlap  \sqrt \sparsity \right) \xi(\kappa, \sparsity, \overlap)}
= \Omega \Big(\frac{1}{n}\Big)
\end{equation}
i.e. 
\begin{equation} \label{eq:SufficientCondition}
\left(1+ \overlap  \sqrt \sparsity \right) \xi(\kappa, \sparsity, \overlap)
= \mathcal{O} \Big( \frac{n}{\kappa} \Big) \, .
\end{equation}

We now consider two alternatives, depending on whether $\kappa$ is bigger than $\frac{1}{\sqrt{\sparsity}}$ or not. If $\kappa \geq \frac{1}{\sqrt{\sparsity}}$, then $\xi(\kappa, \sparsity, \overlap) < 2$ and we can rewrite the sufficient condition~\eqref{eq:SufficientCondition} as:

\begin{align}
\overlap = \mathcal{O}(1) \frac{n}{\kappa\sqrt{\sparsity}}.
\end{align}

In the alternative case, $\kappa \leq \frac{1}{\sqrt{\sparsity}}$. 
Since $a^*(\overlap)$ is decreasing in $\overlap$, we can suppose $\overlap \geq \frac{1}{\sqrt{\sparsity}}$ without loss of generality and thus $\xi(\kappa, \sparsity, \overlap) = \sqrt{1 + \frac{1}{8 \kappa \sqrt{\sparsity}}}$.
We can then rewrite the sufficient condition~\eqref{eq:SufficientCondition} as:
\begin{align}\label{eq:sufcondcor3}
\frac{\overlap \sqrt{\sparsity}}{\sqrt{\kappa}\sqrt[4]{\sparsity}} &= \mathcal{O}(\frac{n}{\kappa})
\nonumber \\
\overlap &= \mathcal{O}(1)\frac{n}{\sqrt{\kappa}\sqrt[4]{\sparsity}} \, .
\end{align}

We observe that since we have supposed that $\kappa \leq \frac{1}{\sqrt{\sparsity}}$, we have $\sqrt{\kappa \sqrt{\sparsity}} \leq \kappa \sqrt{\sparsity} \leq 1$, which means that our initial assumption that $\overlap < \frac{n}{10}$ is stronger than condition~\eqref{eq:sufcondcor3}.

We can now combine both cases to get the following sufficient condition for the geometric rate factor of \ASAGA\ to be the same order as sequential \SAGA\ when $n > \kappa$:
\begin{align}
\overlap = \mathcal{O}(1) \frac{n}{\kappa\sqrt{\sparsity}}; \quad
\overlap = \mathcal{O}(n) \, . 
\end{align}

\paragraph{Ill-conditioned regime.}
In this regime, $\kappa > n$ and the geometric rate factor of sequential \SAGA\ is $a \frac{1}{\kappa}$. 
Here, to obtain a linear speedup, we need $\contraction^* = \mathcal{O}(\frac{1}{\kappa})$.
Since $\frac{1}{n} > \frac{1}{\kappa}$, all we require is that $\frac{a^*(\overlap)}{\kappa} = \Omega(\frac{1}{\kappa})$ where $a^*(\overlap) = \frac{1}{32 \left(1+ \overlap  \sqrt \sparsity \right) \xi(\kappa, \sparsity, \overlap)}$, which reduces to $a^*(\overlap) = \Omega(1)$.

We can give the following sufficient condition:
\begin{align}
\frac{1}{\left(1+ \overlap  \sqrt \sparsity \right) \xi(\kappa, \sparsity, \overlap)} = \Omega(1)
\end{align}

Using that $\frac{1}{n} \leq \sparsity \leq 1$ and that $\kappa > n$, we get that $\xi(\kappa, \sparsity, \overlap) \leq 2$, which means our sufficient condition becomes:

\begin{align}
\overlap \sqrt{\sparsity} &= \mathcal{O}(1)
\nonumber \\
\overlap &= \frac{\mathcal{O}(1)}{\sqrt{\sparsity}} .
\end{align}
This finishes the proof for the first part of Corollary~\ref{thm:illcondition}.	

\paragraph{Universal stepsize.} If $\overlap = \mathcal{O}(\frac{1}{\sqrt{\sparsity}})$, then $\xi(\kappa, \sparsity, \overlap) = \mathcal{O}(1)$ and $(1+\overlap \sqrt{\sparsity}) = \mathcal{O}(1)$, and thus $a^*(\overlap) = \Omega(1)$ (for any $n$ and $\kappa$). This means that the universal stepsize $\stepsize = \Theta(1/L)$ satisfies $\stepsize \leq a^*(\overlap)$ for any $\kappa$, giving the same rate factor $\Omega( \min\{\frac{1}{n}, \frac{1}{\kappa}\})$ that sequential \SAGA\ has, completing the proof for the second part of Corollary~\ref{thm:illcondition}. 
\qed

\section{Additional experimental results}\label{apx:AER}
\subsection{Effect of sparsity}
Sparsity plays an important role in our theoretical results, where we find that while it is necessary in the ``ill-conditioned'' regime to get linear speedups, it is not in the ``well-conditioned'' regime.
We confront this to real-life experiments by comparing the convergence and speedup performance of our three asynchronous algorithms on the Covtype dataset, which is fully dense after standardization.
The results appear in Figure~\ref{fig:covtype}.

While we still see a significant improvement in speed when increasing the number of cores, this improvement is smaller than the one we observe for sparser datasets.
The speedups we observe are consequently smaller, and taper off earlier than on our other datasets.
However, since the observed ``theoretical'' speedup is linear (see Section~\ref{apx:speedup}), we can attribute this worse performance to higher hardware overhead.
This is expected because each update is fully dense and thus the shared parameters are much more heavily contended for than in our sparse datasets.

\begin{figure}
\center \includegraphics[width=0.7\linewidth]{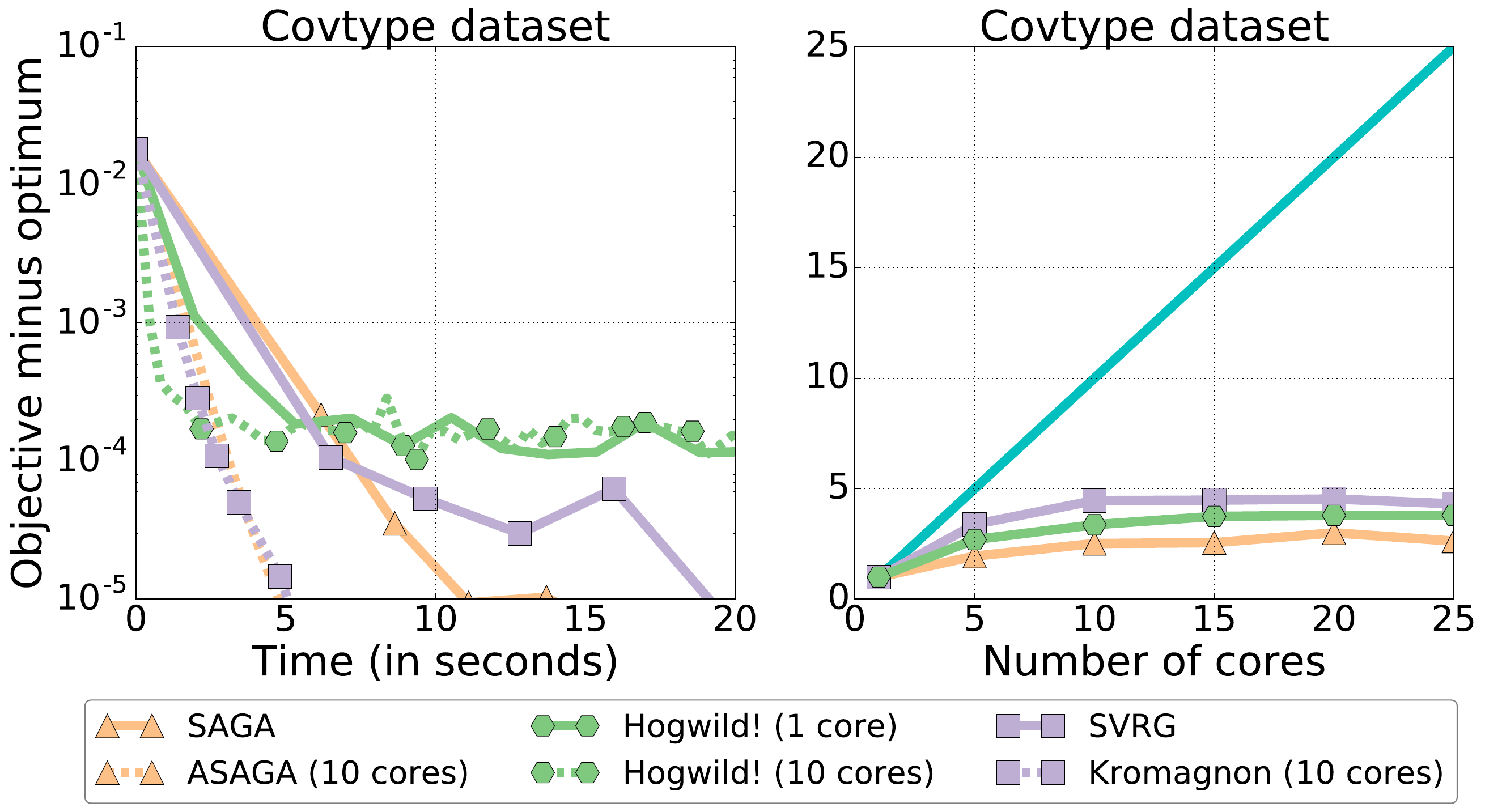}
\caption{Comparison on the Covtype dataset. Left: suboptimality. Right: speedup. The number of cores in the legend only refers to the left plot. }\label{fig:covtype}
\end{figure}

One thing we notice when computing the $\sparsity$ variable for our datasets is that it often fails to capture the full sparsity distribution, being essentially a maximum.
This means that $\sparsity$ can be quite big even for very sparse datasets.
Deriving a less coarse bound remains an open problem.

\subsection{Theoretical speedups}\label{apx:speedup}
In the main text of this paper, we show experimental speedup results where suboptimality is a function of the running time.
This measure encompasses both theoretical algorithmic properties and hardware overheads (such as contention of shared memory) which are not taken into account in our analysis.

In order to isolate these two effects, we plot our convergence experiments where suboptimality is a function of the number of iterations; thus, we abstract away any potential hardware overhead.\footnote{To do so, we implement a global counter which is sparsely updated (every $100$ iterations for example) in order not to modify the asynchrony of the system. 
This counter is used only for plotting purposes and is not needed otherwise.
}
The experimental results can be seen in Figure~\ref{fig:theoretical_speedups}. 

\begin{figure*}
\includegraphics[width=\linewidth]{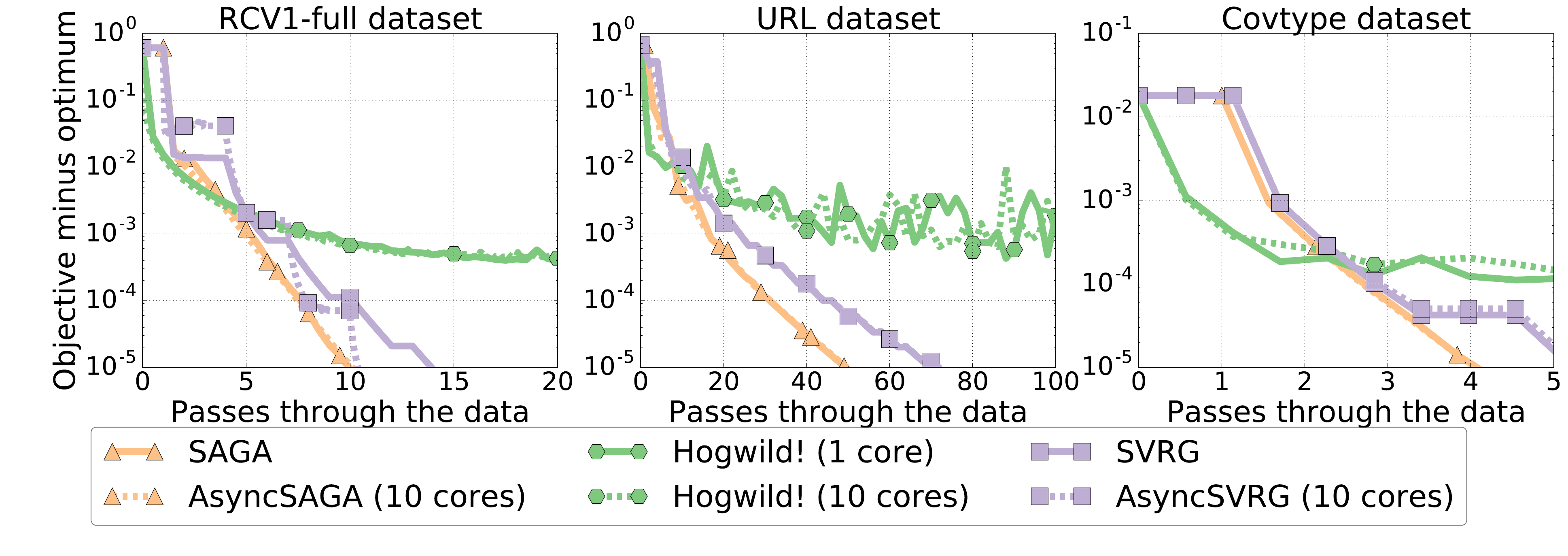}
\caption{{\bf Theoretical speedups}. 
Suboptimality with respect to number of iterations for \ASAGA\, \SVRG\ and \Hogwild\ with 1 and 10 cores.  
Curves almost coincide, which means the theoretical speedup is almost the number of cores $p$, hence linear.}\label{fig:theoretical_speedups}
\end{figure*} 

For all three algorithms and all three datasets, the curves for $1$ and $10$ cores almost coincide, which means that we are indeed in the ``theoretical linear speedup'' regime.
Indeed, when we plotted the amount of iterations required to converge to a given accuracy as a function of the number of cores, we obtained straight  horizontal lines for our three algorithms.
 
The fact that the speedups we observe in running time are less than linear can thus be attributed to various hardware overheads, including shared variable contention -- the compare-and-swap operations are more and more expensive as the number of competing requests increases -- and cache effects as mentioned in Section~\ref{ssec:results}.

\section{A closer look at the $\overlap$ constant} \label{apxD}
\subsection{Theory}
In the parallel optimization literature, $\overlap$ is often referred to as a proxy for the number of cores.
However, intuitively as well as in practice, it appears that there are a number of other factors that can influence this quantity.
We will now attempt to give a few qualitative arguments as to what these other factors might be and how they relate to $\overlap$.

\paragraph{Number of cores.}
The first of these factors is indeed the number of cores. 

If we have $p$ cores, $\overlap \geq p - 1$.
Indeed, in the best-case scenario where all cores have exactly the same execution speed for a single iteration, $\overlap = p -1$.

To get more insight into what $\overlap$ really encompasses, let us now try to define the worst-case scenario in the preceding example.
Consider $2$ cores. 
In the worst case scenario, one core runs while the other is stuck. 
Then the overlap is $t$ for all $t$ and eventually grows to $+\infty$. 
If we assume that one core runs twice as fast as the other, then $\overlap = 2$. 
If both run at the same speed, $\overlap = 1$.

It appears then that a relevant quantity is $R$, the ratio between the fastest execution time to the slowest execution time for a single iteration. 
$\overlap \leq (p-1) R$, which can be arbitrarily bigger than $p$.

\paragraph{Length of an iteration.} 
There are several factors at play in $R$ itself. 
\begin{itemize}
\item The first is the speed of execution of the cores themselves (i.e. clock time). 
The dependency here is quite clear.

\item The second is the data matrix itself. If one $f_i$ has support of size $n$ while all the others have support of size $1$, $r$ may eventually become very big.

\item The third is the length of the computation itself. The longer our algorithm runs, the more likely it is to explore the potential corner cases of the data matrix.
\end{itemize}

The overlap is upper bounded by the number of cores times the maximum iteration time over the minimum iteration time (which is linked to the sparsity distribution of the data matrix).
This is an upper bound, which means that in some cases it will not really be useful. 
For example, in the case where one factor has support size $1$ and all others have support size $d$, the probability of the event which corresponds to the upper bound is exponentially small in $d$. 
We conjecture that a more useful indicator could be the maximum iteration time over the expected iteration time.

To sum up this preliminary theoretical exploration, the $\overlap$ term encompasses a lot more complexity than is usually implied in the literature. 
This is reflected in the experiments we ran, where the constant was orders of magnitude bigger than the number of cores.

\subsection{Experimental results}
In order to verify our intuition about the $\overlap$ variable, we ran several experiments on all three datasets, whose characteristics are reminded in Table~\ref{table:1}.
$\delta_l^i$ is the support size of $f_i$.

\begin{table}[ht]
\caption{Density measures including minimum, average and maximum support size $\delta_l^i$ of the factors.}
\centering
\label{dataset-table2}
\begin{tabular}{lccccccc} 
\toprule
{} & $n$ & $d$ & density & $\max(\delta_l^i)$ & $\min(\delta_l^i)$ & $\bar \delta_l$ & $\max(\delta_l^i) / \bar \delta_l$\\
\midrule
{\bf RCV1} & \hfill 697,641 & \hfill 47,236 & \hfill 0.15\% & \hfill 1,224 & \hfill 4 & \hfill 73.2 & \hfill 16.7\\ 
{\bf URL} & \hfill 2,396,130 & \hfill 3,231,961 & \hfill 0.003\% & \hfill 414 & \hfill 16 & \hfill 115.6 & \hfill 3.58 \\
{\bf Covtype} & \hfill 581,012 & \hfill 54 & \hfill 100\% & \hfill 12 & \hfill 8 & \hfill 11.88 & \hfill 1.01 \\
\bottomrule
\end{tabular}
\label{table:1}
\end{table}

To estimate $\overlap$, we compute the average overlap over $100$ iterations, which is a lower bound on the actual overlap (which is a maximum, not an average). 
We then take the maximum observed quantity.
We use an average because computing the overlap requires using a global counter, which we do not want to update every iteration since it would make it a heavily contentious quantity susceptible of artificially changing the asynchrony of our algorithm.

The results we observe are order of magnitude bigger than $p$, indicating that $\overlap$ can indeed not be dismissed as a mere proxy for the number of cores, but has to be more carefully analyzed.

First, we plot the maximum observed $\overlap$ as a function of the number of cores (see Figure~\ref{fig:overlap}).
We observe that the relationship does indeed seem to be roughly linear with respect to the number of cores until 30 cores.
After 30 cores, we observe what may be a phase transition where the slope increases significantly.

\begin{figure*}
\includegraphics[width=\linewidth]{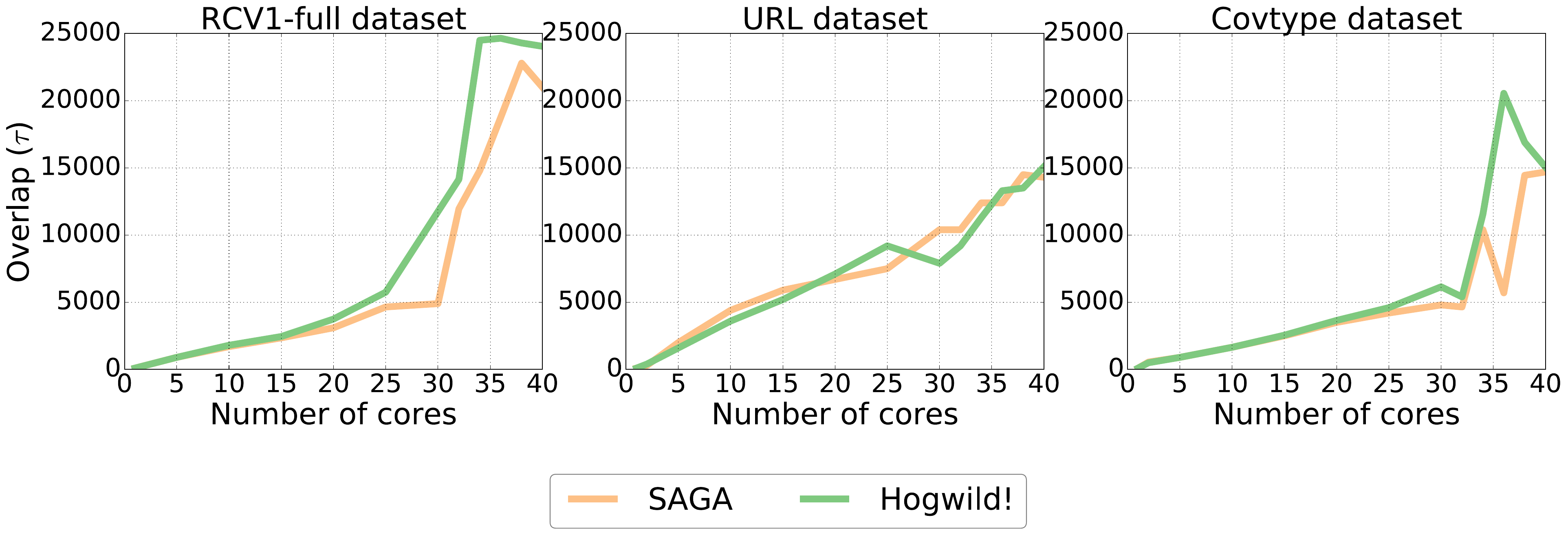}
\caption{{\bf Overlap}. 
Overlap as a function of the number of cores for both \ASAGA\ and \Hogwild\ on all three datasets.}\label{fig:overlap}
\end{figure*} 

Second, we measured the maximum observed $\overlap$ as a function of the number of epochs.
We omit the figure since we did not observe any dependency; that is, $\overlap$ does not seem to depend on the number of epochs.
We know that it must depend on the number of iterations (since it cannot be bigger, and is an increasing function with respect to that number for example), but it appears that a stable value is reached quite quickly (before one full epoch is done).

If we allowed the computations to run forever, we would eventually observe an event such that $\overlap$ would reach the upper bound mentioned in the last section, so it may be that $\overlap$ is actually a very slowly increasing function of the number of iterations.

\section{Lagged updates and Sparsity}\label{apxC}

\subsection{Comparison with Lagged Updates in the sequential case}
The lagged updates technique in \SAGA\ is based on the observation that the updates for component $[x]_v$ need not be applied until this coefficient needs to be accessed, that is, until the next iteration $t$ such that $ v \in S_{i_t}$. 
We refer the reader to~\citet{laggedsaga} for more details.

Interestingly, the expected number of iterations between two steps where a given dimension $v$ is involved in the partial gradient is $p_v^{-1}$, where $p_v$ is the probability that $v$ is involved in a given step. 
$p_v^{-1}$ is precisely the term which we use to multiply the update to $[x]_v$ in Sparse \SAGA. 
Therefore one may see the updates in Sparse \SAGA\ as \textit{anticipated} updates, whereas those in the~\citet{laggedsaga} implementation are \textit{lagged}. 
The two algorithms appear to be very close, even though Sparse \SAGA\ uses an expectation to multiply a given update whereas the lazy implementation uses a random variable (with the same expectation). Sparse \SAGA\ therefore uses a slightly more aggressive strategy, which gave faster run-time in our experiments below.

Although Sparse \SAGA\ requires the computation of the $p_v$ probabilities, this can be done during a first pass throughout the data (during which constant step size \SGD\ may be used) at a negligible cost. 

\begin{figure*}
\includegraphics[width=\linewidth]{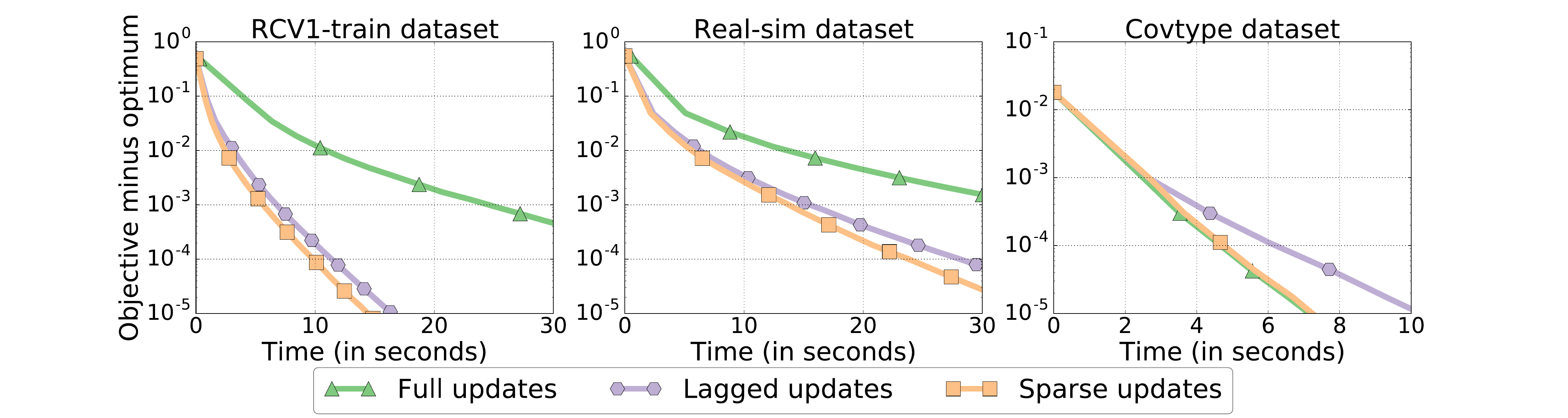}
\includegraphics[width=\linewidth]{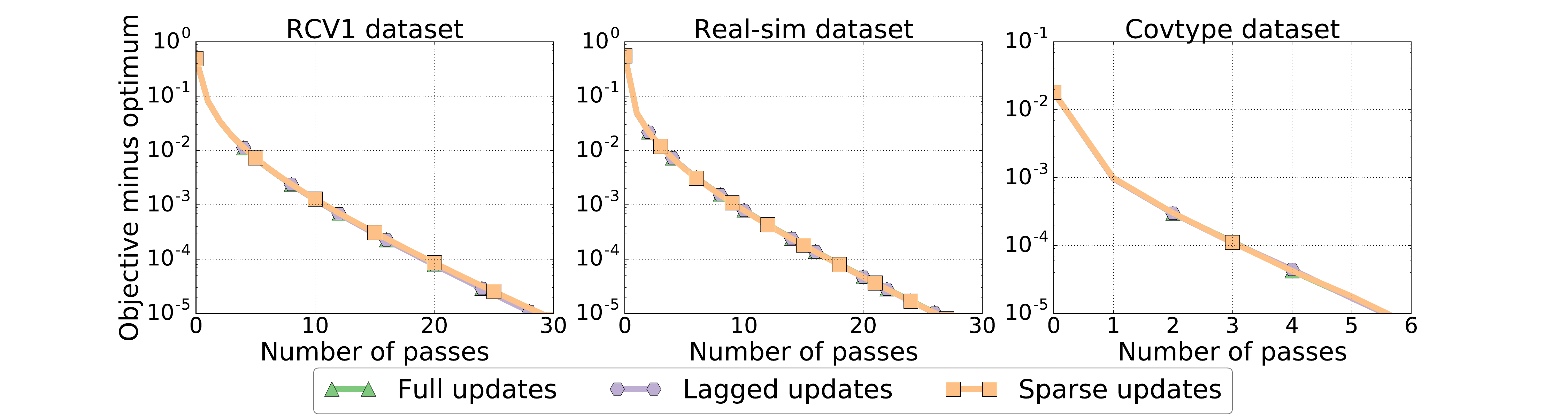}
\caption{{\bf Lagged vs sparse \SAGA\ updates}. 
Suboptimality with respect to time for different \SAGA\ update schemes on various datasets. First row: suboptimality as a function of time. Second row: suboptimality as a the number of passes over the dataset.
For sparse datasets (RCV1 and Real-sim), lagged and sparse updates have a lower cost per iteration which result in faster convergence.}\label{fig:fig_1}
\end{figure*} 

In our experiments, we compare the Sparse \SAGA\ variant proposed in Section~\ref{scs:sparse_saga} to two other approaches: the naive (i.e. dense) update scheme and the lagged updates implementation described in~\citet{SAGA}.
Note that we use different datasets from the parallel experiments, including a subset of the RCV1 dataset and the realsim dataset. 
Figure~\ref{fig:fig_1} reveals that sparse and lagged updates have a lower cost per iteration, resulting in faster convergence for sparse datasets.
Furthermore, while the two approaches had similar convergence in terms of number of iterations, the Sparse \SAGA\ scheme is slightly faster in terms of runtime (and as previously pointed out, sparse updates are better adapted for the asynchronous setting).
For the dense dataset (Covtype), the three approaches exhibit a similar performance. 

\subsection{On the difficulty of parallel lagged updates} \label{apx:DifficultyLagged}

In the implementation presented in~\citet{laggedsaga}, the dense part ($\bar \alpha$) of the updates is deferred. 
Instead of writing dense updates, counters $c_d$ are kept for each coordinate of the parameter vector -- which represent the last time these variables were updated -- as well as the average gradient $\bar \alpha$ for each coordinate. 
Then, whenever a component $[\hat x]_d$ is needed (in order to compute a new gradient), we subtract $\stepsize (t-c_d) [\bar \alpha]_d$ from it and $c_d$ is set to $t$.
The reason we can do this without modifying the algorithm is that $[\bar \alpha]_d$ only changes when $[\hat x]_d$ also does.

In the sequential setting, this is strictly the same as doing the updates in a dense way, since the coordinates are only stale when they're not used. 
Note that at the end of an execution all counters have to be subtracted at once to get the true final parameter vector (and to bring every $c_d$ counter to the final $t$).

In the parallel setting, several issues arise:
\begin{itemize}
\item two cores might be attempting to correct the lag at the same time. 
In which case since updates are done as additions and not replacements (which is necessary to ensure that there are no overwrites), the lag might be corrected multiple times, i.e. overly corrected. 
\item we would have to read and write atomically to each $[\hat x_d], c_d, [\bar \alpha]_d$ triplet, which is highly impractical.
\item we would need to have an explicit global counter, which we do not in \ASAGA\ (our global counter~$t$ being used solely for the proof).
\item in the dense setting, updates happen coordinate by coordinate. 
So at time $t$ the number of $\bar \alpha$ updates a coordinate has received from a fixed past time $c_d$ is a random variable, which may differs from coordinate to coordinate. 
Whereas in the lagged implementation, the multiplier is always $(t-c_d)$ which is a constant (conditional to $c_d$), which means a potentially different $\hat x_t$.
\end{itemize}

All these points mean both that the implementation of such a scheme in the parallel setting would be impractical, and that it would actually yields a different algorithm than the dense version, which would be even harder to analyze.

\section{Additional empirical details} \label{apxE}
\subsection{Detailed description of datasets}
We run our experiments on four datasets. In every case, we run logistic regression for the purpose of binary classification.

\paragraph{RCV1 ($n=697,641$, $d=47,236$).}
The first is the Reuters Corpus Volume I (RCV1) dataset~\citep{RCV1}, an archive of over 800,000 manually categorized newswire stories made available by Reuters, Ltd. for research purposes. 
The associated task is a binary text categorization.

\paragraph{URL ($n=2,396,130$, $d=3,231,961$).}
Our second dataset was first introduced in~\citet{URL}. Its associated task is a binary malicious url detection.
This dataset contains more than 2 million URLs obtained at random from Yahoo's directory listing (for the ``benign'' URLs) and from a large Web mail provider (for the ``malicious'' URLs).
The benign to malicious ratio is 2.
Features include lexical information as well as metadata.
This dataset was obtained from the libsvmtools project.\footnote{\url{http://www.csie.ntu.edu.tw/~cjlin/libsvmtools/datasets/binary.html}}

\paragraph{Covertype ($n=581,012$, $d=54$).}
On our third dataset, the associated task is a binary classification problem (down from 7 classes originally, following the pre-treatment of~\citet{Covtype}). The features are cartographic variables.
Contrarily to the first two, this is a dense dataset.

\paragraph{Realsim ($n=73,218$, $d=20,958$).}
We only use our fourth dataset for non-parallel experiments and a specific compare-and-swap test.
It constitutes of UseNet articles taken from four discussion groups (simulated auto racing, simulated aviation, real autos, real aviation).

\subsection{Implementation details}
\paragraph{Hardware.} 
All experiments were run on a Dell PowerEdge 920 machine with 4 Intel Xeon E7-4830v2 processors with 10 2.2GHz cores each and 384GB 1600 Mhz RAM.

\paragraph{Software.} \label{scalavsc}
All algorithms were implemented in the Scala language and the software stack consisted of a Linux operating system running Scala 2.11.7 and Java 1.6. 

We chose this expressive, high level language for our experimentation despite its typical 20x slower performance compared to C because our primary concern was that the code may easily be reused and extended for research purposes (which is harder to achieve with low level, heavily optimized C code; especially for error prone parallel computing). 

As a result our timed experiments exhibit sub-optimal running times, e.g. compared to~\citet{KR13}.
This is as we expected.
The observed slowdown is both consistent across datasets (roughly 20x) and with other papers that use Scala code (e.g.~\citet{mania}, ~\citet[Fig. 2]{cocoa}).

Despite this slowdown, our experiments show state-of-the-art results in convergence per number of iterations.
Furthermore, the speed-up patterns that we observe for our implementation of Hogwild and Kromagnon are similar to the ones given in [MN15], Niu et al.[2011] and Reddi et al.[2015] (in various languages).

The code we used to run all the experiments is available at \url{https://github.com/RemiLeblond/ASAGA}.

\paragraph{Necessity of compare-and-swap operations.}
Interestingly, we have found necessary to use compare-and-swap instructions in the implementation of \ASAGA. 
In Figure~\ref{fig:cas_comparison}, we display suboptimality plots using non-thread safe operations and compare-and-swap (CAS) operations. The non-thread safe version starts faster but then fails to converge beyond a specific level of suboptimality, while the compare-and-swap version does converges linearly up to machine precision.

For \textit{compare-and-swap} instructions we used the \texttt{AtomicDoubleArray} class from the Google library \texttt{Guava}. This class uses an \texttt{AtomicLongArray} under the hood (from package \texttt{java.util.concurrent.atomic} in the standard Java library), which does indeed benefit from lower-level CPU-optimized instructions.

\begin{figure}
\center \includegraphics[width=0.5 \linewidth]{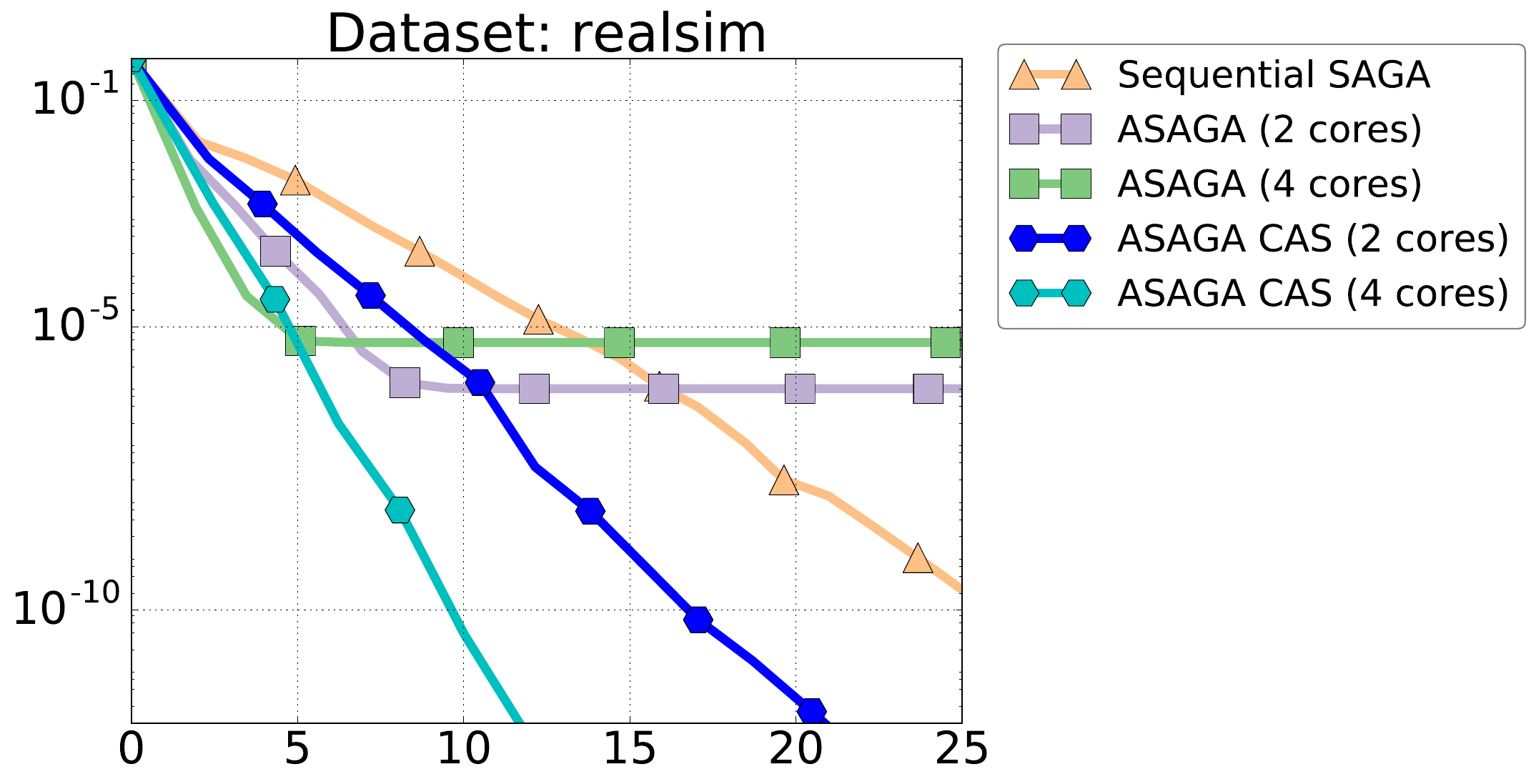}
\caption{{\bf Compare and swap in the implementation of \ASAGA}. 
Suboptimality as a function of time for \ASAGA, both using compare-and-swap (CAS) operations and using standard operations. 
The graph reveals that CAS is indeed needed in a practical implementation to ensure convergence to a high precision.} \label{fig:cas_comparison}
\end{figure}

\paragraph{Efficient storage of the $\alpha_i$.}
Storing $n$ gradient may seem like an expensive proposition, but for linear predictor models, one can actually store a single scalar per gradient (as proposed in~\citet{laggedsaga}), which is what we do in our implementation of \ASAGA.

\paragraph{Step sizes.} For each algorithm, we picked the best step size among 10 equally spaced values in a grid, and made sure that the best step size was never at the boundary of this interval. 
For Covtype and RCV1, we used the interval $[\frac{1}{10L}, \frac{10}{L}]$, whereas for URL we used  the interval $[\frac{1}{L}, \frac{100}{L}]$ as it admitted larger step sizes.
It turns out that the best step size was fairly constant for different number of cores for both \ASAGA\ and \KROMAGNON, and both algorithms had similar best step sizes.

\subsection{Biased update in the implementation}\label{apx:Bias}
In the implementation detailed in Algorithm~\ref{alg:sagasync}, $\bar \alpha$ is maintained in memory instead of being recomputed for every iteration.
This saves both the cost of reading every data point for each iteration and of computing $\bar \alpha$ for each iteration.

However, this removes the unbiasedness guarantee.
The problem here is the definition of the expectation of $\hat \alpha_i$.
Since we are sampling uniformly at random, the average of the $\hat \alpha_i$ is taken at the precise moment when we read the $\alpha_i^t$ components. 
Without synchronization, between two reads to a single coordinate in $\alpha_i$ and in $\bar \alpha$, new updates might arrive in $\bar \alpha$ that are not yet taken into account in $\alpha_i$. 
Conversely, writes to a component of $\alpha_i$ might precede the corresponding write in $\bar \alpha$ and induce another source of bias. 

In order to alleviate this issue, we can use coordinate-level locks on $\alpha_i$ and $\bar \alpha$ to make sure they are always synchronized. 
Such low-level locks are quite inexpensive when $d$ is large, especially when compared to vector-wide locks.

However, as previously noted, experimental results indicate that this fix is not necessary.

\end{document}